\theoremstyle{plain}
\newtheorem{theorem}{Theorem}[section]
\newtheorem{lemma}[theorem]{Lemma}
\newtheorem{remark}[theorem]{Remark}
\newtheorem{proposition}[theorem]{Proposition}
\newtheorem{corollary}[theorem]{Corollary}
\numberwithin{equation}{section}
\theoremstyle{definition}
\newtheorem{definition}[theorem]{Definition}
\theoremstyle{remark}
\newcommand{\bA}{{\mathbf A}}
\newcommand{\bB}{{\mathbf B}}
\newcommand{\bC}{{\mathbf C}}
\newcommand{\bD}{{\mathbf D}}
\newcommand{\bU}{{\mathbf U}}
\newcommand{\bomega}{{\boldsymbol \omega}}
\newcommand{\bTheta}{{\boldsymbol \Theta}}
\newcommand{\bcO}{\boldsymbol{\mathcal O}}
\newcommand{\bT}{{\mathbf T}}
\newcommand{\bbeta}{{\boldsymbol \beta}}
\newcommand{\cA}{{\mathcal A}}
\newcommand{\cD}{{\mathcal D}}
\newcommand{\cE}{{\mathcal E}}
\newcommand{\cG}{{\mathcal G}}
\newcommand{\cH}{{\mathcal H}}
\newcommand{\cL}{{\mathcal L}}
\newcommand{\cM}{{\mathcal M}}
\newcommand{\cN}{{\mathcal N}}
\newcommand{\cO}{{\mathcal O}}
\newcommand{\cU}{{\mathcal U}}
\newcommand{\cX}{{\mathcal X}}
\newcommand{\cY}{{\mathcal Y}}
\newcommand{\D}{{\mathbb D}}
\newcommand{\sbm}[1]{\left[\begin{smallmatrix} #1
		\end{smallmatrix}\right]}
\newcommand{\Ob}{\boldsymbol{{{\mathfrak O}}}}
\newcommand{\Gr}{{\boldsymbol{\mathfrak G}}}
\begin{document}

\title[Weighted Hardy spaces]
{Weighted Hardy spaces: shift invariant and coinvariant subspaces, linear systems and 
operator model theory}
\author[J. A. Ball]{Joseph A. Ball}
\address{Department of Mathematics,
Virginia Tech,
Blacksburg, VA 24061-0123, USA}
\email{joball@math.vt.edu}
\author[V. Bolotnikov]{Vladimir Bolotnikov}
\address{Department of Mathematics,
The College of William and Mary,
Williamsburg VA 23187-8795, USA}
\email{vladi@math.wm.edu}

\dedicatory{In memory of Bela Sz.-Nagy, a fine mathematician and 
leading operator theorist}
\thanks{The second author's research was supported by the Plumeri Award of the
College of William and Mary}

\begin{abstract}
         The Sz.-Nagy--Foias model theory for  $C_{\cdot 0}$ 
	 contraction operators combined with the Beurling-Lax theorem 
	 establishes a correspondence between any two of four kinds of 
	 objects:  shift-invariant subspaces, operator-valued inner 
	 functions, conservative discrete-time input/state/output 
	 linear systems, and $C_{\cdot 0}$ Hilbert-space contraction 
	 operators.  We discuss an analogue of all these ideas in the 
	 context of weighted Hardy spaces over the unit disk and an 
	 associated class of hypercontraction operators.
 \end{abstract}

\subjclass{47A57}
\keywords{Operator-valued functions, weighted Hardy space, Bergman 
inner functions, Beurling-Lax theorem, hypercontraction operators, 
dilation theory, characteristic function}

\maketitle

\section{Introduction}  \label{S:Intro}
\setcounter{equation}{0}

A couple of seminal developments in nonselfadjoint operator theory in the middle 
part of the last century was the Sz.-Nagy dilation theorem and the 
Sz.-Nagy--Foias model theory (we refer to the second edition of the 
Sz.-Nagy--Foias monograph \cite{NF} (with additional authors H.~Bercovici and 
L. Kerchy) for a comprehensive treatment which includes a thorough 
discussion of later related developments).   The Sz.-Nagy dilation 
theorem asserts that any Hilbert-space contraction operator can be 
lifted to a coisometry (as well as dilated to a unitary operator), 
while the Sz.-Nagy--Foias model theory associates with any completely 
nonunitary contraction operator $T$ a characteristic function 
$\Theta_{T}$ which is a contractive analytic  operator-valued 
function between two coefficient Hilbert spaces $\cU$ and $\cY$. The 
characteristic function $\Theta = \Theta_{T}$ has
the additional property that it is {\em pure} in the sense that there are no nonzero 
subspaces $\cU_{0}$ and  $\cY_{0}$ of $\cU$ and $\cY$ respectively 
such that $\Theta(z)|_{\cU_{0}}$ reduces to a constant unitary operator from 
$\cU_{0}$ onto $\cY_{0}$.  Conversely, starting with any pure contractive 
analytic function $(\Theta(z), \cU, \cY)$, there is a functional 
model Hilbert space $\cH(\Theta)$ and a completely 
nonunitary canonical-model contraction operator $T = T(\Theta)$ 
acting on $\cH(\Theta)$ so that $T$ and $T(\Theta)$ are unitarily 
equivalent in case $\Theta = \Theta_{T}$.  An important motivating special 
case is the $C_{\cdot 0}$ case where the characteristic function 
 is closely entangled with the Beurling-Lax-Halmos (or 
simply Beurling-Lax for short) theorem 
associating an inner function with a shift-invariant subspace of 
$H^{2}$ (operator-valued inner function for the case of a 
higher-multiplicity shift acting on $H^{2}(\cY): = H^{2} \otimes \cY$ for a 
coefficient Hilbert space $\cY$); see \cite{Beurling, Lax, Halmos}. 
Indeed, the Sz.-Nagy--Foias model theory for the $C_{\cdot 0}$ case 
can be summed up as follows:   roughly, apart from manageable 
degeneracies, {\em there is a one-to-one correspondence between four 
kinds of objects:  (1) shift-invariant subspaces $\cM \subset H^{2}(\cY)$, (2) 
operator-valued inner functions $\Theta$, (3) unitary colligation 
matrices $\left[ \begin{smallmatrix} A & B \\ C & D \end{smallmatrix} 
\right] \colon \left[ \begin{smallmatrix}  \cX \\ \cU  
\end{smallmatrix} \right] \to \left[ \begin{smallmatrix} \cX \\ \cY 
\end{smallmatrix} \right]$,
and (4) $C_{\cdot 0}$-contraction operators $T$.}
Note that the Beurling-Lax theorem gives the correspondence between a 
shift-invariant subspace $\cM \subset H^{2}(\cY)$ and an inner 
function $\Theta$ via $\cM = \Theta \cdot  H^{2}(\cY)$. Given an 
inner function $\Theta$, the Sz.-Nagy-Foias theory tells us that 
$\Theta$ coincides with the characteristic function $\Theta_{T}$ for 
the $C_{\cdot 0}$ contraction operator $T: = P_{\cM^{\perp}} 
M_{z}|_{\cM^{\perp}}$.
The formula for a characteristic function then leads to a realization 
for $\Theta = \Theta_{T}$ of the form
\begin{equation}   \label{clas-real}
  \Theta(z) = D + z C (I - zA)^{-1} B
\end{equation}
with $U = \left[ \begin{smallmatrix} A & B \\ C & D 
\end{smallmatrix} \right]$ unitary.  Conversely, if $U = 
\left[ \begin{smallmatrix} A & B \\ C & D \end{smallmatrix} \right]$ 
is unitary with $A$ of class $C_{0 \cdot}$, one can verify that 
$\Theta(z)$ given by \eqref{clas-real} is inner.
Associated with a shift-invariant subspace $\cM \subset H^{2}(\cY)$ is the contraction 
operator $T(\cM^{\perp}) : = P_{\cM^{\perp}} M_{z}|_{\cM^{\perp}}$.  Given a $C_{\cdot 
0}$-contraction operator $T$, the Sz.-Nagy--Foias characteristic 
function $\Theta_{T}$ provides an inner function so that we recover 
$T$ up to unitary equivalence in the form $T = P_{\cM} M_{z}|_{\cM}$ 
with $\cM = \Theta_{T} \cdot \cU$.  In short, starting with an object 
of any one of the four types listed above, there is a recipe for 
passing to its equivalent representative in any one of the three 
remaining types.

\smallskip

It should be mentioned 
that related notions of characteristic function and associated 
operator model theory appeared in the work of Liv\v{s}ic and the 
Ukrainian operator-theory school (see \cite{Brodskii, Liv1, Liv2}) 
as well as in the work of de Branges and Rovnyak (see \cite{dBR1, dBR2}).  The 
characteristic function also appears in other guises, namely, as the 
scattering function in the setting of Lax-Phillips scatterings (see 
\cite{LP, AA, Helton-scat, NV1, NV2}) where there is also a close 
connection with the geometry of the Sz.-Nagy--Foias dilation space, 
as well as the transfer function of a conservative discrete-time 
linear system (or the scattering function for a lossless LCR circuit) 
(see \cite{Helton}).  

\smallskip

While the Sz.-Nagy-Foias model theory in its 
original form had a tremendous influence and applications for the 
theory of a single contraction operator on a Hilbert space, we focus 
here on extensions to more general settings.  Also we focus only on 
the aspects of dilation theory, characteristic function and 
associated operator model theory; this leaves out another key 
and influential component of the theory, namely the Commutant Lifting 
Theorem which has also seen lots of extensions to more general 
settings (see e.g.~\cite{FFGK, MS99}).   There were first obtained 
compelling extensions of the Sz.-Nagy dilation theory to 
classes other than contractions, including multivariable versions 
involving commutative operator-tuples rather than a single operator, 
in the work of Agler \cite{Ag1, Ag2}, M\"uller \cite{Muller}, 
M\"uller-Vasilescu \cite{MV}, Athavale \cite{Athavale}, Curto-Vasilescu \cite{CV1, CV2}, 
Pott \cite{Pott}, Ambrozie-Engli\v{s}-M\"uller \cite{AEM}, and 
Arazy-Engli\v{s} \cite{AE}.  An early identification of a general 
operator-algebra setting for dilation theory which indeed influenced 
some of the work mentioned above was achieved by 
Arveson \cite{Arv, ArvII}.

\smallskip

The general settings for which there has also been identified a 
characteristic function to this point are more limited compared to 
those where a dilation theory exists.  There was early work of 
Abrahamse-Douglas \cite{AD1, AD2} and Ball \cite{BallC00} and 
continuing with \cite{LKMV, BV1, BV2} where function theory on the 
disk is replaced by function theory on a finitely-connected planar 
domain (or more generally bordered Riemann surface of dividing type);
this work is also closely tied up with the appropriate notion of a 
Beurling-Lax theorem for this setting.  A more complete analogue of 
the whole Sz.-Nagy--Foias model theory is the extension to the 
setting where the single operator $T$ is replaced by a freely 
noncommutative row contraction. Here we say that the $d$-tuple 
$T = (T_{1}, \dots, T_{d})$ of operators on the Hilbert space $\cH$ is 
a {\em row contraction} if the row matrix 
$\begin{bmatrix} T_{1} & \cdots & T_{d} \end{bmatrix}$ is 
contractive as an operator from the direct sum space $\cH^{d} = 
\bigoplus_{j=1}^{d} \cH$ into $\cH$; we refer to the work of Bunce, Frazho, 
and Popescu \cite{Bunce, Frazho, Popescu1, Popescu3} for the dilation 
theory aspects and Popescu \cite{Popescu2} for the characteristic 
function aspects; the work of Ball-Bolotnikov-Fang \cite{BBF1} drew 
out the system-theory aspects while that of Ball-Vinnikov \cite{BVMemoir} extended 
these results from the the completely non-coisometric setting to the 
general completely nonunitary setting.  
There has also been work (see e.g.~the work of Muhly-Solel 
\cite{MS05}) extending the Sz.-Nagy--Foias model theory to 
more abstract operator-algebra settings.  There is a parallel 
dilation theory, characteristic function, model theory, and 
Beurling-Lax theorem for the 
case of a commutative row contraction (see the work of Drury 
\cite{Drury}, Arveson \cite{Arv1998}, Bhattacharyya-Eschmeier-Sarkar 
\cite{BES1, BES2}, Ball-Bolotnikov \cite{BBmodelball}, 
McCullough-Trent \cite{McCT2000}), as well as 
more flexible settings simultaneously containing the freely 
noncommutative case and the commutative case \cite{BhatBhat, Popescu4}.
There is also an operator model theory and a version of the 
characteristic function for the setting where the single contraction 
operator is replaced by a family of contraction operators $T_{n} 
\in \cL(\cH_{n+1}, \cH_{n})$ ($n= \dots -1,0,1, \dots$) and the 
characteristic function is the input/output map of a conservative 
time-varying linear system (see the 
papers of Constantinescu \cite{Constantinescu1, Constantinescu2} 
and Alpay-Ball-Peretz \cite{ABP}).  We also mention that 
the Hilbert-module setting for the 
Sz.-Nagy--Foias model theory was pursued in \cite {DP, MS99, DKKS}.

\smallskip

Generally speaking, a distinguishing feature of the cases where the characteristic 
function appears versus the cases where there is only a dilation 
theory and associated model theory without a characteristic-function 
invariant is that the associated positive kernel is not of the type 
now called a Pick kernel (see e.g.~\cite{AgMcC} for the terminology).
A first step away from this restriction was in the work of Olofsson 
\cite{oljfa, olaa} who introduced a characteristic function for the 
class of $n$-hypercontractions, i.e., operators $T$ for which 
$\sum_{k=0}^{m} (-1)^{k} \binom{m}{k} T^{*k}T^{k} \ge 0$ for $1 \le m 
\le n$.  The class of $n$-hypercontractions is closely tied to the 
function theory for the weighted Bergman space $\cA_{n}(\cY)$ over the 
unit disk (where $\cY$ is a coefficient Hilbert space) having 
operator-valued reproducing kernel equal to $k_{n}(z, \zeta) \cdot I_{\cY} : = 
\frac{1}{(1 - z \zeta)^{n}} I_{\cY}$.  A Beurling-Lax theorem for the 
Bergman space setting has been of interest to the function-theoretic 
operator-theory community since the 1970s but has turned out to be  
much more difficult to come by; the results obtained are necessarily 
of a more delicate form, with the most progress just since the 1990s.
There eventually evolved a notion of Bergman 
inner function to be a function $\Theta$ which maps a coefficient 
Hilbert space isometrically onto a {\em wandering subspace} $\cE 
\subset \cA_{n}(\cY)$ for the 
Bergman shift operator $S_{n}$ equal to multiplication by the 
coordinate function $M_{z} \colon f(z) \mapsto z f(z)$ on 
$\cA_{n}(\cY)$.
Here $\cE$ is a  {\em wandering subspace} means only 
that $\cE$ is orthogonal to $S_{n}^{k} \cE$ for all $k > 0$; a key 
distinction from the Hardy space case is that it does not follow that $S_{n}^{k} \cE$ is 
orthogonal to $S_{n}^{k'}\cE$ for distinct positive integers $k,k'$.
Then at least we get a Beurling-like representation $\cM$ as the closure of 
$\Theta \cU[z]$ (where $\cU[z]$ is the linear space of polynomials 
with coefficients in $\cU$ and $\Theta$ is a Bergman inner function 
with $\Theta \cdot \cU$ equal to the wandering subspace $\cE = \cM 
\ominus S_{n} \cM$). 
Bergman inner functions were first constructed by Hedenmalm as the solution of an 
extremal problem (see \cite{heden1, heden2}).  The biharmonic Green function 
was introduced shortly thereafter to prove the contractive divisor 
property in a conceptually better way extending the result to an 
$L^{p}$-setting by Duren, Khavinson, Shapiro and Sundberg (see 
e.g.~\cite{DKSS}).
When the shift-invariant subspace $\cM \subset \cA_{n}(\cY)$ is {\em 
pure} in the sense that $\cE: = \cM \ominus S_{n} \cM$ is {\em 
generating} for $\cM$ (meaning that $\cM = 
\overline{\operatorname{span}}_{k\ge 0} S_{n}^{k} \cE$), one gets at 
least a Beurling-type representation of the form $\cM = 
\overline{\operatorname{span}} S_{n}^{k} \cE$; the fact that this 
holds in general for the unweighted case $n=2$ was first proved by  
Aleman-Richter-Sundberg \cite{ars}.
Shimorin \cite{sh1, sh2} noticed that many of these ideas can be developed 
in a purely operator-theoretic setting where $S_{n}$ is replaced by a 
left-invertible Hilbert-space operator $T$. Additional analysis of the 
wandering-subspace property has been developed in \cite{HJSh, McCR, 
ol2005, Sutton}.  There are now available two monographs \cite{DS, HKZ} leading the 
reader through many of these developments.

\smallskip

The work of Olofsson offered a new direction for the computation and 
construction of Bergman  inner functions by introducing ideas from 
linear system theory whereby Bergman inner functions have a 
transfer-function-like realization 
\begin{equation}   \label{Olofsson-real}
 \Theta(z) = D + z C \left( \sum_{k=1}^{n} (I - zA)^{-k} \right) B
 \end{equation}
for a certain colligation matrix $U = \left[ \begin{smallmatrix} A & 
B \\ C & D \end{smallmatrix} \right]$ constructed explicitly 
from the invariant subspace $\cM$.  Alternatively, 
one could start with the $n$-hypercontrac\-tion $A = T^{*}$ (assuming 
that $A$ is a $C_{0 \cdot}$ $n$-hypercontraction), use the dilation 
theory of Agler \cite{Ag2} to model $A$ as the restriction of 
$S_{n}^{*}$ to an $S_{n}^{*}$-invariant subspace $\cM^{\perp} \subset 
\cA_{n}(\cY)$, and identify explicitly the Bergman-inner function 
$\Theta$ associated with the $S_{n}$-invariant subspace $\cM$ 
as the {\em characteristic function} of $T$.  The explicit formula 
of the type \eqref{Olofsson-real} for $\Theta_{T}$ is very much like 
the Sz.-Nagy--Foias formula for the characteristic function for a 
contraction operator $T$, but now one must work with certain 
$n$-level generalized defect operators $D_{n,T}$ and $D_{n,T^{*}}$ 
rather than the standard Sz.-Nagy--Foias defect operators $D_{T} = (I 
- T^{*}T)^{1/2}$ and $D_{T^{*}} = (I - T T^{*})^{1/2}$ in the 
Sz.-Nagy--Foias theory.

\smallskip

Our own paper \cite{BBberg} followed up on this work of 
Olofsson by drawing out further the system-theory aspects and 
introducing several alternate Beurling-Lax-type representations for an 
$S_{n}$-invariant subspace $\cM$ of $\cA_{n}(\cY)$.  The present 
paper extends the work of \cite{BBberg} in two respects:
(1) we replace the Bergman space $\cA_{n}(\cY)$  with a more general 
weighted Hardy space $H^{2}_{\bbeta}(\cY)$ described below, and (2) in 
addition to Beurling-Lax representation theorems, we here explicitly 
define a characteristic function (more precisely, characteristic function 
family) for a $\bbeta$-$C_{\cdot 0}$ $*$-$\bbeta$-hypercontraction 
operator $T$ on a Hilbert space $\cX$ and obtain a complete 
Sz.-Nagy--Foias dilation and model theory  for this class of operators.

\smallskip

In detail, the class of weighted Hardy spaces which we consider is as follows.
Given a sequence $\bbeta=\{\beta_j\}_{j\ge 0}$ of positive numbers, the {\em 
weighted Hardy space} 
$H^2_\bbeta$ is defined as the set of all functions analytic on the  open unit
disk $\D$ and with finite norm $\|f\|_{H^2_{\bbeta}}$ given by
$$
\|f\|_{H^2_{\bbeta}}^2=\sum_{j=0}^\infty \beta_j |f_j|^2\quad\mbox{if}\quad
f(z)=\sum_{j=0}^\infty  f_jz^j.
$$
Polynomials are dense in  $H^2_\bbeta$ and  the monomials $\{z^k\}_{k\ge 0}$  
form an orthogonal set
uniquely defining the weight sequence $\bbeta$  by $\beta_j=\|z^j\|^2$ for $j\ge 
0$.  A general reference for such spaces and the associated weighted shift operators 
is the article of
Shields \cite{Shields}.

\smallskip 

For a Hilbert space $\cY$, we denote by $H^2_{\bbeta}(\cY)$ the
tensor product Hilbert space $H^2_{\bbeta}\otimes \cY$ which can be identified as
\begin{equation}
H^2_{\bbeta}(\cY)=\left\{f(z)={\displaystyle\sum_{k\ge 0}f_kz^k}: \;
\|f\|^2_{H^2_{\bbeta}(\cY)}:={\displaystyle \sum_{k\ge 0}\beta_k \cdot
\|f_k\|_{\cY}^2}<\infty\right\}.
\label{1.1}
\end{equation}
If ${\displaystyle \liminf\beta_j^{\frac{1}{j}}}\ge 1$, then the power series
\begin{equation}
R_\bbeta(z)=\sum_{j=0}^\infty \beta_j^{-1}z^j
\label{1.2}
\end{equation}
converges on the open unit disk $\D$. The function 
$$
K_{\bbeta}(z,\zeta)=R_\bbeta(z\overline{\zeta})=\sum_{j=0}^\infty   
\beta_j^{-1}\cdot z^j\overline{\zeta}^j
$$
turns out to be the reproducing kernel for $H^2_{\bbeta}$ in the sense that 
$z\mapsto K_\bbeta(z,\zeta)$
belongs to  $H^2_{\bbeta}$ and the equality $\langle f, \, 
K_\bbeta(\cdot,\zeta)\rangle_{H^2_\bbeta}=f(\zeta)$
holds for every $\zeta\in\D$ and $f\in H^2_\bbeta$.

\smallskip

We will write ${\bf 1}$ for the sequence $\bbeta$ with  $\beta_j=1$ for all $j\ge 
0$. The space
$H^2_{\bf 1}(\cY)$ is the classical vector Hardy space $H^2(\cY)$
of the unit  disk. Another important example is given by the weight sequence 
$\bbeta_\alpha=\{\beta_{\alpha,k}\}_{k\ge 0}$ with 
\begin{equation}
\beta_{\alpha,k}=\frac{k!}{\alpha(\alpha+1)\cdots 
(\alpha+k-1)}=\frac{k!\Gamma(\alpha)}{\Gamma(\alpha+k)}
\label{1.4}
\end{equation}
for any fixed $\alpha>1$. The space $H^2_{\bbeta_\alpha}$ equals the Bergman 
space $A^2_{\alpha-2}$
of $\cY$-valued functions $f$ analytic on $\D$ and such that  
$$
\|f\|^2_{A^2_{\alpha-2}}=(\alpha-1)\int_{\D} 
\|f(z)\|^2_{\cY}(1-|z|^2)^{\alpha-2}dA(z)<\infty
$$
where $dA$ is the planar Lebesgue measure normalized so that $A(\D)=1$.

\smallskip

The shift operator $S_\bbeta$ on  $H^2_\bbeta$ is  defined by 
$S_\bbeta: \, f(z)\mapsto zf(z)$ and simple 
inner-product calculations show that its adjoint $S_\bbeta^*$ is given by
\begin{equation}
S^*_\bbeta f=\sum_{k=0}^\infty\frac{\beta_{k+1}}{\beta_k}\cdot 
f_{k+1}z^k\quad\mbox{if}\quad
f(z)=\sum_{k=0}^\infty f_kz^k.
\label{1.5}   
\end{equation}
In this paper we will be primarily interested in subspaces of $H^2_\bbeta$ which 
are invariant either under 
$S_\bbeta$ or under $S_\bbeta^*$. For the rest of the paper, we assume that the 
weight sequence 
$\bbeta=\{\beta_j\}_{j\ge 0}$ meets the following conditions. Firstly we assume 
that 
\begin{equation}
\liminf\beta_j^{\frac{1}{j}}=1,\quad \beta_0=1\quad\mbox{and}\quad 1\le 
\frac{\beta_j}{\beta_{j+1}}\le M \quad\mbox{for all 
$j\in{\mathbb Z}_+$}
\label{1.6}
\end{equation}
and some $M\ge 1$. The two first conditions are normalizing and thus, 
non-restrictive. It is seen from 
\eqref{1.5} that $\|S_\bbeta^*\|={\displaystyle \sup_{j\ge 
0}\frac{\beta_{j+1}}{\beta_j}}$
and thus, the third condition in \eqref{1.6} means that the shift operator 
$S_\bbeta: \, H^2_\bbeta\to 
H^2_\bbeta$ is contractive and left-invertible. Secondly, we assume that  the  
reciprocal power series
\begin{equation}
{\displaystyle\sum_{j=0}^\infty c_jz^j}:={\displaystyle\frac{1}{R_{\bbeta}(z)}}
={\displaystyle \left(\sum_{j=0}^\infty \beta_j^{-1}z^j\right)^{-1}}
\label{1.7}
\end{equation}
belongs to the Wiener class $W^+$, that is, the coefficients  
$\{c_{j}\}_{j \ge 0}$ appearing in \eqref{1.7} are absolutely 
summable:
\begin{equation}
\text{if } c_0=1 \text{ and recursively } c_n=-\sum_{j=0}^{n-1}c_j\beta_{n-j}^{-1}, 
\text{ then } \sum_{j=0}^\infty |c_j|<\infty.
\label{1.8}
\end{equation}
We remark that the sequence $\bbeta_\alpha$ defined in \eqref{1.4} meets all the 
above assumptions.  In particular, when $\alpha = n$ is a positive 
integer, then $\frac{1}{R_{\bbeta}(z)}$ is even a polynomial.
A more general example is given by the function 
$$
R_\bbeta(z)=\sum_{j=0}^\infty \beta_j^{-1}z^j=\frac{1}{(1-z)^\alpha (1-zg(z))}
$$
where $\alpha>1$ and $g$ is a function in $W^+$ with non-negative Taylor 
coefficients at the origin having no zeros  in the closed unit disk.
An open question is whether this Wiener-algebra assumption \eqref{1.8}  can be 
weakened in such a way that the results of this paper continue to 
hold.

\smallskip

The operator-model theory developed in this paper is as follows. 
We establish a correspondence between three types of objects:  (1) 
{\em $S_{\bbeta}$-invariant subspaces} $\cM \subset H_{\bbeta}(\cY)$, (2)
$\bbeta$-inner function families $\{\Theta_{k}\}_{k \ge 0}$, (3) a 
family of colligation matrices $U_{k}$ of the type considered in 
Section 2 below which satisfy additional metric constraints discussed 
in Section 5 below ({\em $\bbeta$-unitary colligation family}), and (4) 
$\bbeta$-$C_{\cdot 0}$ $*$-$\bbeta$-{\em hypercontraction operators} $T$.  
Here the term $\bbeta$-$C_{\cdot 0}$ applied to a Hilbert space 
operator $T$ is a strengthening of the 
standard notion of $C_{\cdot 0}$ as used in the book \cite{NF}
(i.e., the property that $T^{*n} \to 0$ strongly as $n \to \infty$)
tailored  to the sequence $\bbeta$ (see Definition \ref{D:betaCdot0} 
below). Associated with any  colligation family is a certain function 
family $\{\Theta_{k}\}_{k \ge 0}$ which has the interpretation as the 
transfer function for a certain time-varying input/state/output linear system
described in Section 2; when the colligation 
family is also $\bbeta$-unitary, the associated family of 
functions is an inner function family and it is natural to say that 
the associated time-varying linear system is {\em 
$\bbeta$-unitary}: this is worked out in Sections 
5 and 6.2.  This gives the correspondence between $\bbeta$-inner 
function families and $\bbeta$-unitary colligation families or 
equivalently $\bbeta$-unitary linear systems. 
The Beurling-Lax piece of the correspondence referred above is the representation of an 
$S_{\bbeta}$-invariant subspace $\cM$ in terms of a $\bbeta$-inner function 
family $\{\Theta_{k}\}_{k \ge 0}$ as $\cM = 
\bigoplus_{k=0}^{\infty} S_{\bbeta}^{k} \Theta_{k} \cdot \cU_{k}$: this is worked out in Section 
6.2 below.  Given an $S_{\bbeta}$-invariant subspace $\cM \subset 
H_{\bbeta}(\cY)$, we show in Section 4 that $P_{\cM} 
S_{\bbeta}|_{\cM}$ is a $\bbeta$-$C_{\cdot 0}$ $*$-$\bbeta$-hypercontraction.
Given any $\bbeta$-$C_{\cdot 0}$ $*$-$\bbeta$-hypercontraction $T$, in Section 
7 we associate its  $\bbeta$-inner characteristic function family
$\{\Theta_{T,k}\}_{k \ge 0}$ so that  we recover $T$ 
up to unitary equivalence as $T = P_{\cM} S_{\bbeta}|_{\cM}$ with 
$\cM = \bigoplus_{k=0}^{\infty} S_{\bbeta}^{k} \Theta_{T,k} \cU_{k}$.
In Section 7 we also show how to go directly from a given 
$\bbeta$-inner function family $\{\Theta_{k}\}_{k \ge 0}$ to a 
$\bbeta$-unitary colligation family which realizes $\{\Theta_{k}\}_{k 
\ge 0}$, i.e., we obtain explicit formulas for the colligation operator-matrices 
$U_{k}  = \left[ \begin{smallmatrix} A & B_{k} \\ C & D_{k} \end{smallmatrix} \right]$ 
constructed from the function family   $\{\Theta_{k}\}_{k \ge 0}$.

\smallskip
    
A natural follow-up project to this paper is to develop this model 
theory for $*$-$\bbeta$-hypercontractions with the $\bbeta$-$C_{\cdot 0}$ 
hypothesis removed.  For a given $*$-$\bbeta$-hypercontraction $T$, 
it does hold that $T$ being $\bbeta$-$C_{\cdot 0}$ implies that $T$ is in 
fact $C_{\cdot 0}$ but we have not been able to resolve the converse 
except for the special case where $\beta_{k} = \frac{k!(n-1)!}{(k+n-1)!}$ 
(i.e., $\bbeta$ of the form $\bbeta_{\alpha}$ as in \eqref{1.4}
with $\alpha = n$ a positive integer as studied in \cite{oljfa, olaa, 
BBberg}) (see Remark \ref{R:stable} below);  another open question for 
future work is to resolve this issue.

\smallskip

After the present Introduction, in Section 2 we give a 
time-domain system-theoretic interpretation for the class of inner 
function families coming up in Section 6.2. Section 3 presents the 
$\bbeta$-analogues of standard notions from systems theory concerning 
observability operators, observability gramians, and associated Stein 
equations and inequalities which will be needed in the sequel.  Section 4 applies 
these constructions to the model setting where the system operator is 
the restriction of the backward $\bbeta$-shift $S_{\bbeta}^{*}$ to an 
invariant subspace $\cM^{\perp} \subset H^{2}_{\bbeta}(\cY)$.  Section 5 
separates out the consequences of the metric properties associated 
with a $\bbeta$-unitary colligation family; these are used for 
the construction of the  $\bbeta$-inner function family 
Beurling-Lax representer for an  $S_{\bbeta}$-invariant subspace $\cM 
\subset H^{2}_{\bbeta}(\cY)$ in Section 6.2.  
Section 6.1 and 6.3 discuss other types of Beurling theorems in 
parallel to those developed in our earlier work 
\cite{BBberg} for the special case $\beta_{j} = \frac{j! 
(n-1)!}{(j+n-1)!}$ for a positive integer $n$; in particular, the 
Beurling representation theorem in Section 6.3 corresponds to that of 
Olofsson in \cite{olaa}.   The final Section 7 defines the 
characteristic function family and develops the 
operator-model theory for the class of $\bbeta$-$C_{\cdot 
0}$, $*$-$\bbeta$-hypercontraction operators.  There also is given the 
functional-model form for the colligation matrix associated with a 
given $\bbeta$-inner function family $\{\Theta_{k}\}_{k \ge 0}$. 

\smallskip

Finally it is a pleasure to thank the anonymous referee for a 
thorough reading and review of the  manuscript  which led to a number of 
improvements in the final version.

\section{System-theoretic motivation}
\label{STM}   

For $\cU$ and $\cY$ any pair of Hilbert spaces, we use the notation
$\cL(\cU, \cY)$ to denote the space of bounded, linear operators
from $\cU$ to $\cY$.  For $\cX$ a single Hilbert space, we shorten
the notation $\cL(\cX, \cX)$ to $\cL(\cX)$. Let $\bbeta$ be a given weight 
sequence, 
let $\cX$, $\cY$ and $\cU_k$ ($k\ge 0$) be Hilbert spaces, let
$$
A \in \cL(\cX), \; \; C \in \cL(\cX, \cY), \; \;
B_k \in \cL(\cU_k, \cX), \; \; D_k \in \cL(\cU_k, \cY)
$$ be bounded linear operators, and let us consider
the associated discrete-time time-variant linear system
 \begin{equation}  \label{2.1}
 \Sigma_{\bbeta}: \quad    \left\{ \begin{array}{rcl}
 x(j+1) & = & {\displaystyle\frac{\beta_j}{\beta_{j+1}}}\cdot 
Ax(j)+{\displaystyle\frac{1}{\beta_{j+1}}}\cdot B_ju(j), \\ 
[3mm]
 y(j) & = & C x(j)+{\displaystyle\frac{1}{\beta_j}}\cdot D_ju(j)
 \end{array} \right.
 \end{equation}
with $x(j)$ taking values in the {\em state space} $\cX$, $u(j)$ taking
values in the {\em input-space} $\cU_j$ and $y(j)$ taking values in the
{\em output-space} $\cY$. If we let the system evolve on the
nonnegative  integers $j \in {\mathbb Z}_{+}$, then the whole
trajectory $\{u(j), x(j), y(j)\}_{j \in {\mathbb Z}_{+}}$ is
determined from the input signal $\{u(j)\}_{j\ge 0}$
and the initial state $x(0)$ according to the formulas
\begin{align}
x(j) &= \beta_j^{-1}\cdot \left(A^{j} x(0)+\sum_{\ell=0}^{j-1}A^{j-\ell-1} B_\ell
u(\ell)\right),\label{2.2}\\  
y(j) &= \beta_j^{-1}\cdot \left(CA^{j} x(0)+\sum_{\ell=0}^{j-1}CA^{j-\ell-1} 
B_\ell u(\ell)
+D_ju(j)\right).\label{2.3}
  \end{align}
Formula \eqref{2.2} is established by simple induction arguments, while 
\eqref{2.3} is obtained by
straightforward substituting of \eqref{2.2} into the second equation in \eqref{2.1}.
The integral form of the system equations \eqref{2.1} is the map
from initial-state/input signal to state trajectory/output signal
$$
T(\Sigma_{\bbeta}) = \begin{bmatrix}  T(\Sigma_{\bbeta})_{11} &  
T(\Sigma_{\bbeta})_{12}  \\ T(\Sigma_{\bbeta})_{21} & 
T(\Sigma_{\bbeta})_{22} \end{bmatrix} \colon \begin{bmatrix} x(0) \\ 
\{u(j)\}_{j \ge 0} \end{bmatrix} \mapsto \begin{bmatrix} 
\{x(j)\}_{j>0} \\ \{y(j)\}_{j \ge 0} \end{bmatrix}
$$
determined by solving the system equations \eqref{2.1} recursively.
Our main interest here will be only in the two pieces
\begin{equation}   \label{observe}
\bcO_{\bbeta} : = T(\Sigma_{\bbeta})_{21} 
\colon x(0) \mapsto 
\{y(j)\}_{j \ge 0},
\end{equation}
the map from initial state to output signal induced by setting the 
input signal equal to 0 and usually called the {\em observation map}, 
and the map 
\begin{equation}  \label{IOmap}
\bT_{\bbeta} \colon = T(\Sigma_{\bbeta})_{22} \colon \{u(j)\}_{j \ge 0} 
\mapsto \{ y(j)\}_{j \ge 0}
\end{equation}
from input signal to output signal determined by setting the initial 
state equal to 0, usually called the {\em input-output map} for the system 
$\Sigma_{\bbeta}$.  From \eqref{2.3} we see that $\cO_{\bbeta}$ and 
$T_{\bbeta}$  have the explicit matrix representations
$$ \bcO_{\bbeta} = [ \beta_{i}^{-1} C A^{i}]_{i \ge 0}, \quad
[\bT_{\bbeta}]_{i,j} = \begin{cases} 0 & \text{for } i<j, \\
\beta_{i}^{-1} D_{i}& \text{for } i=j, \\
\beta_{i}^{-1} C A^{i-1-j}B_{j} & \text{for } i>j, \quad  0 \le i,j. \end{cases} 
$$

Let us introduce the $Z$-transformed input, state and output signals
 $$ 
 \widehat u(z)  = \sum_{k=0}^{\infty} u(k) z^{k}, \quad
 \widehat x(z) = \sum_{k=0}^{\infty} x(k) z^{k}, \quad \widehat y(z)
 = \sum_{k=0}^{\infty} y(k) z^{k}.
 $$
 (Note that $\widehat u(z)$ is merely formal since $u(j) \in \cU_{j}$ 
 and in general the input spaces are distinct linear spaces for 
 distinct indices $j$ and hence there is no ambient linear space in 
 which to take the sum.) 
To write the $Z$-transformed version of the system-trajectory formulas 
\eqref{2.2}--\eqref{2.3},
we introduce the standard backward shift operator $S_{\bf 1}^{*}$ acting on  
formal power series 
according  to
$$
        S_{\bf 1}^{*} \colon \sum_{n=0}^{\infty} a_{n} z^{n} \mapsto
        \sum_{n=0}^{\infty} a_{n+1} z^{n}.
$$
We next introduce backward shifts of the function \eqref{1.2} by letting
 \begin{equation}  \label{2.5}
R_{\bbeta,k}(z):= 
\left(S_{1}^{*k}R_\bbeta\right)(z)=\sum_{j=0}^\infty\beta_{k+j}^{-1}z^j
\end{equation}
so that $R_{\bbeta,0}(z)=R_{\bbeta}(z)$. For every Hilbert space operator 
$A\in\cL(\cX)$ 
having spectral radius at most one, we can define operator-valued functions 
\begin{equation}
 \label{2.6}
R_{\bbeta,k}(zA)=\sum_{j=0}^\infty\beta_{k+j}^{-1}A^jz^j\quad \mbox{for all}\quad 
k\ge 0
\end{equation}
defined for $z\in\D$. Multiplying both sides of \eqref{2.2} by $z^{j}$ and 
summing 
up over $j\ge 0$, we  get, on account of \eqref{2.6},
\begin{align}
\widehat x(z)&=\left(\sum_{j=0}^\infty 
\beta_j^{-1}A^jz^j\right)x(0)+\sum_{k=1}^{\infty}
\left(\sum_{j=k}^\infty\beta_j^{-1} A^{j-k}z^j\right)B_{k-1}u(k-1)\notag\\
&=R_\bbeta(zA)x(0)+\sum_{k=1}^{\infty}z^k\left(\sum_{j=0}^\infty\beta_{j+k}^{-1}
A^jz^j\right)B_{k-1}u(k-1)\notag\\
&=R_\bbeta(zA)x(0)+\sum_{k=1}^{\infty}z^kR_{\bbeta,k}(zA)B_{k-1}u(k-1)\notag\\
&=R_\bbeta(zA)x(0)+\sum_{k=0}^{\infty}z^{k+1}R_{\bbeta,k+1}(zA)B_ku(k).\notag
\end{align}
The same procedure applied to \eqref{2.3} gives
\begin{align}
\widehat y(z)&=CR_\bbeta(zA)x(0)+\sum_{k=0}^{\infty}z^k \left(\beta_k^{-1}D_k+
zCR_{\bbeta,k+1}(zA)B_k\right)u(k)\notag\\
&=\cO_{\bbeta,C,A}x(0)+\sum_{k=0}^{\infty}z^k \Theta_k(z)u(k),\label{2.8k}
\end{align}
where
 \begin{equation}
\cO_{\bbeta,C,A} \colon \; x \mapsto CR_\bbeta(zA)x=\sum_{j=0}^\infty 
(\beta_j^{-1}CA^jx) \,
z^j \label{0.6}
\end{equation}
is the {\em $\bbeta$-observability operator} (the $Z$-transform of 
the time-domain observation operator $\bcO_{\bbeta}$ \eqref{observe}) and where
 \begin{equation}
\Theta_k(z)=\beta_k^{-1}D_k+zCR_{\bbeta,k+1}(zA)B_k \qquad (k=0,1,\ldots)
\label{2.10}
\end{equation}
is a family of transfer functions encoding the $Z$-transform of 
the time-domain input-output operator $\bT_{\bbeta}$ \eqref{IOmap}:
\begin{equation}   \label{ZtransIO}
\bT_{\bbeta} \colon \{u(j)\}_{j \ge 0} \mapsto \{y(j)\}_{j \ge 0} 
\Leftrightarrow 
\sum_{j = 0}^{\infty} y(j) z^{j} = \sum_{j=0}^{\infty} \Theta_{j}(z) 
 u(j) z^{j}.
\end{equation}
Note that we recover the classical time-invariant case by setting 
$\beta_{j} = 1$ for all $j$ and by taking $B_{j} = B$ and $D_{j} = D$ 
independent of $j$;  in this  case $\Theta_{j}(z) = D + z C (I 
- zA)^{-1} B$ is independent of $j$ and the formula on the right in 
\eqref{ZtransIO} reduces to
$$
    \widehat y(z) = \Theta_0(z) \cdot \widehat u(z).
$$

These observations suggest that the following terminology will be 
useful.

\begin{definition}  \label{D:bbeta-real}
When the function family 
$\{\Theta_k\}_{k \ge 0}$ is connected with the colligation family 
$\left\{U_{k} = \left[ \begin{smallmatrix} A & B_{k} \\ C & D_{k} 
\end{smallmatrix} \right] \right\}_{k \ge 0}$ as in \eqref{2.10}, we say 
that the colligation family $\{U_{k}\}_{k \ge 0}$ provides a {\em  $\bbeta$-realization} for
the function family $\{\Theta_k\}_{k \ge 0}$, and that the function 
family $\{\Theta_{k}(z)\}_{j\ge 0}$ is the {\em $\bbeta$-transfer function family} 
for the colligation family $\{U_{k}\}_{k \ge 0}$ and the associated 
system $\Sigma_{\bbeta}$ \eqref{2.1}.
\end{definition}

\section{Observability operators and 
gramians, Stein equalities and inequalities}  
\label{S:obs}

Formula \eqref{0.6} associates with any {\em output pair} $(C, A)$
(i.e., $C\in\cL(\cX,\cY)$ and $A\in\cL(\cX)$) the $\bbeta$-observability operator
$\cO_{\bbeta,C,A}$.  In case ${\mathcal O}_{\bbeta,C,A}$ is bounded as an 
operator from $\cX$ into 
$H^2_{\bbeta}(\cY)$, we say that the pair $(C,A)$ is {\em 
$\bbeta$-output-stable}.
If $(C,A)$ is $\bbeta$-output stable, then the
{\em $\bbeta$-observability gramian}
$$
{\mathcal G}_{\bbeta,C, A}:=(\cO_{\bbeta,C, A})^{*}\cO_{\bbeta,C,A}
$$
is bounded on $\cX$ and can be represented via the series
\begin{equation}
{\mathcal G}_{\bbeta,C, A}= \sum_{k=0}^\infty \beta_k^{-1}\cdot A^{*k}C^*CA^k
\label{3.1}
\end{equation}
converging in the strong operator topology; see e.g.~\cite[Problem 120]{halmosbook}. We will also make 
use of 
the shifted versions of $\cO_{\bbeta,C,A}$ and ${\mathcal G}_{\bbeta,C, A}$ given 
by
\begin{equation}
{\Ob}^{(k)}_{\bbeta,C,A}x=\sum_{j=0}^\infty 
\beta_{j+k}^{-1}(CA^jx)z^j\quad\mbox{and}\quad
{\Gr}^{(k)}_{\bbeta,C,A}=\sum_{j=0}^\infty \beta_{j+k}^{-1}A^{*j}C^*CA^j
\label{defR}
\end{equation}
for $k\ge 0$. It follows from \eqref{defR} and formula \eqref{1.1} for the norm 
in 
$H^2_\bbeta(\cY)$ that 
\begin{equation}
{\Gr}^{(k)}_{\bbeta,C,A}=\left(S_\bbeta^k{\Ob}^{(k)}_{\bbeta,C,A}\right)^*
S_\bbeta^k{\Ob}^{(k)}_{\bbeta,C,A}.
\label{defRa}
\end{equation}
On the other hand, comparing formulas \eqref{defR} for $k=0$ with \eqref{0.6} and 
\eqref{3.1} 
gives
\begin{equation}
{\Ob}^{(0)}_{\bbeta,C,A}={\cO}_{\bbeta,C,A}\quad\mbox{and}\quad 
{\Gr}^{(0)}_{\bbeta,C,A}=\cG_{\bbeta,C,A}.
\label{st2}     
\end{equation}
\begin{proposition}
If $\beta_j/\beta_{j+1}\le M$ for all $j\ge 0$ and $(C,A)$ is a 
$\bbeta$-output-stable pair, then
\begin{equation}
\|{\Ob}^{(k)}_{\bbeta,C,A}\|\le M^k\cdot \|{\cO}_{\bbeta,C,A}\|,\qquad
{\Gr}^{(k)}_{\bbeta,C,A}\le M^k\cdot {\mathcal G}_{\bbeta,C, A}.
\label{3.4a}
\end{equation} 
Furthermore, the weighted Stein identity
\begin{equation}
A^*{\Gr}^{(k+1)}_{\bbeta,C,A}A+\beta_k^{-1}C^*C={\Gr}^{(k)}_{\bbeta,C,A}
\label{7.3}
\end{equation}
holds for all integers $k\ge 0$.
\label{P:3.0}
\end{proposition}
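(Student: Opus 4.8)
The plan is to reduce all three assertions to the single elementary estimate
$$
\frac{\beta_j}{\beta_{j+k}} = \prod_{i=0}^{k-1}\frac{\beta_{j+i}}{\beta_{j+i+1}} \le M^k,
$$
which follows at once from the hypothesis $\beta_j/\beta_{j+1}\le M$ by telescoping a product of $k$ consecutive ratios; equivalently $\beta_{j+k}^{-1}\le M^k\beta_j^{-1}$ for all $j,k\ge 0$. Everything else is a matter of comparing the defining series in \eqref{defR} term by term, keeping track of convergence.

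For the gramian bound I would compare \eqref{defR} with \eqref{3.1} summand by summand. Each operator $A^{*j}C^*CA^j$ is positive, so the estimate $\beta_{j+k}^{-1}\le M^k\beta_j^{-1}$ gives, at the level of partial sums,
$$
\sum_{j=0}^{N}\beta_{j+k}^{-1}A^{*j}C^*CA^j \le M^k\sum_{j=0}^{N}\beta_{j}^{-1}A^{*j}C^*CA^j \le M^k\,\cG_{\bbeta,C,A}.
$$
Since $(C,A)$ is $\bbeta$-output-stable, the right-hand partial sums increase strongly to $\cG_{\bbeta,C,A}$; hence the left-hand partial sums are uniformly bounded, $\Gr^{(k)}_{\bbeta,C,A}$ exists as a strong limit, and passing to the limit yields $\Gr^{(k)}_{\bbeta,C,A}\le M^k\,\cG_{\bbeta,C,A}$, the second inequality in \eqref{3.4a}. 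For the operator-norm bound on $\Ob^{(k)}_{\bbeta,C,A}$ I would write the $H^2_\bbeta(\cY)$-norm out explicitly using \eqref{1.1} and \eqref{defR}, obtaining
$$
\bigl\|\Ob^{(k)}_{\bbeta,C,A}x\bigr\|^2_{H^2_\bbeta(\cY)} = \sum_{j=0}^{\infty}\beta_j\,\beta_{j+k}^{-2}\,\|CA^jx\|^2_{\cY},
$$
while the $k=0$ case reduces to $\|\cO_{\bbeta,C,A}x\|^2=\sum_{j\ge 0}\beta_j^{-1}\|CA^jx\|^2$. Multiplying $(\beta_j/\beta_{j+k})^2\le M^{2k}$ through by $\beta_j^{-1}$ gives $\beta_j\beta_{j+k}^{-2}\le M^{2k}\beta_j^{-1}$, so the first sum is dominated by $M^{2k}\|\cO_{\bbeta,C,A}x\|^2$; taking the supremum over unit vectors $x$ delivers $\|\Ob^{(k)}_{\bbeta,C,A}\|\le M^k\|\cO_{\bbeta,C,A}\|$.

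The weighted Stein identity \eqref{7.3} needs no inequalities at all: substituting \eqref{defR} for $\Gr^{(k+1)}_{\bbeta,C,A}$ and conjugating by $A$ gives
$$
A^*\Gr^{(k+1)}_{\bbeta,C,A}A = \sum_{j=0}^{\infty}\beta_{j+k+1}^{-1}A^{*(j+1)}C^*CA^{j+1} = \sum_{i=1}^{\infty}\beta_{i+k}^{-1}A^{*i}C^*CA^{i}
$$
after the index shift $i=j+1$, and the missing $i=0$ term is precisely $\beta_k^{-1}C^*C$, so adding it reassembles the full series defining $\Gr^{(k)}_{\bbeta,C,A}$. All of this is elementary; the only point demanding care is the legitimacy of manipulating these infinite operator series, and I expect the sole genuine obstacle to be the bookkeeping that justifies strong convergence of each rearrangement. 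That justification is already supplied by $\bbeta$-output-stability, which guarantees $\cG_{\bbeta,C,A}$ converges strongly and thereby dominates all the partial sums appearing above.
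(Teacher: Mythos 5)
Your proof is correct and takes essentially the same approach as the paper's: both reduce everything to the telescoped estimate $\beta_j/\beta_{j+k}\le M^k$ applied term by term to the series \eqref{defR} (giving the gramian bound directly and the norm bound via $\beta_j\beta_{j+k}^{-2}\le M^{2k}\beta_j^{-1}$), and both obtain the Stein identity \eqref{7.3} by the index shift in the defining power series. Your added bookkeeping with partial sums and strong convergence is just a more explicit version of what the paper leaves implicit.
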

\begin{proof}
Indeed, since  $\beta_j/\beta_{j+1}\le M$ for all $j\ge 0$, we have 
$$
\frac{\beta_{j}}{\beta_{j+k}}\le M^{k}\quad\mbox{for all}\quad k,j\ge 0
$$
and then it follows from \eqref{0.6}, \eqref{3.1} and \eqref{defR} that
\begin{align*}
{\Gr}^{(k)}_{\bbeta,C,A}&=\sum_{j=0}^\infty 
\frac{\beta_{j}}{\beta_{j+k}}\beta_{j}^{-1}A^{j*}C^*CA^j\\
&\le M^k \sum_{j=0}^\infty \beta_{j}^{-1}A^{j*}C^*CA^j=M^k \cdot 
\cG_{\bbeta,C,A},\\
\|{\Ob}^{(k)}_{\bbeta,C,A}x\|^2&=\left\langle 
\sum_{j=0}^\infty \frac{\beta_j^2}{\beta_{j+k}^2} \beta_j^{-1}A^{j*}C^*CA^jx, \, 
x\right\rangle\\
&\le M^{2k}\cdot\left\langle  {\mathcal G}_{\bbeta,C, A}x, \, 
x\right\rangle = M^{2k}\cdot\|{\cO}_{\bbeta,C,A}x\|^2
\end{align*}
proving inequalities \eqref{3.4a}.
Equality \eqref{7.3} follows immediately from power series representation 
\eqref{defR} for 
${\Gr}^{(k)}_{\bbeta,C,A}$ and the similar one for ${\Gr}^{(k+1)}_{\bbeta,C,A}$.
\end{proof} 

\smallskip

As suggested by the
Agler hereditary functional calculus as formulated by
Ambrozie-Engli\v{s}-M\"uller \cite{AEM}, we introduce the operator
$$
B_{A} \colon X \mapsto A^{*} X A
$$
 mapping  $\cL(\cX)$ into itself, and then view $\cG_{\bbeta,C,A}$ (at
 least formally) as being given by
\begin{equation} \label{defGGamma}
\cG_{\bbeta,C,A} = R_{\bbeta}(B_A)[C^{*}C].
\end{equation}
If $\rho(A)<1$ (where $\rho(A)$ denotes the spectral radius of $A$), 
\eqref{defGGamma} is precise; in general 
one
can make this precise by interpreting \eqref{defGGamma} in the form
$$
    \cG_{\bbeta,C,A} = \lim_{r \uparrow 1} R_{\bbeta}(r B_{A})[C^{*}C].
$$

\begin{remark}\label{R:3.1}
{\rm Given an operator $A\in\cL(\cX)$ and a function $f(z)={\displaystyle 
\sum_{j=0}^\infty f_j z^j}$ in the 
Wiener algebra $W^+$, i.e., the coefficients $f_{j}$ satisfy the 
summability condition 
\begin{equation}   \label{Wiener}
    \sum_{j=0}^{\infty} |f_{j}| < \infty,
\end{equation}
the operatorial map 
$$
f(B_A):=\sum_{j=0}^\infty f_jB_A^j: \;  X \mapsto \sum_{j=0}^\infty f_j\, 
A^{*j}XA^j
$$
is well defined  for any $X\in\cL(\cX)$ subject to inequalities
\begin{equation}   \label{2.3a}
X\ge A^*XA\ge 0.
\end{equation}
Indeed, by spectral theory, assumption \eqref{2.3a}  yields that $\|A^{*j} X A^j \| \le \| X \|$ for 
$j \ge 0$.  For $f$ in the Wiener algebra $W^+$,  we see that $\sum_{j=0}^\infty |f_j| \| A^{*j} X A^j \|$
converges.
We conclude that the sum defining $f(B_A)(X)$ is absolutely convergent in operator norm if $f \in W^+$ and \eqref{2.3a} 
holds.}
\end{remark}

\begin{proposition}
If $R_\bbeta$ and $R_{\bbeta,k}$ are defined as in \eqref{1.2} and \eqref{2.5}, 
and if 
${\displaystyle\frac{1}{R_\bbeta}}$
belongs to $W^+$, then ${\displaystyle\frac{R_{\bbeta,k}}{R_\bbeta}}$
belongs to $W^+$ for all $k\ge 1$.
\label{P:3.2}
\end{proposition}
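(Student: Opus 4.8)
The plan is to exploit two standard features of the Wiener algebra $W^+$: it is a commutative Banach algebra under multiplication (hence closed under products and differences), and it is invariant under the backward shift $S_{\bf 1}^{*}$, since $\|S_{\bf 1}^{*}f\|_{W^+}=\sum_{n\ge 0}|a_{n+1}|\le\sum_{n\ge 0}|a_n|=\|f\|_{W^+}$ for $f=\sum a_nz^n$. The naive attempt — writing $R_{\bbeta,k}/R_{\bbeta}=R_{\bbeta,k}\cdot(1/R_{\bbeta})$ and invoking the algebra property — \emph{fails}, because $R_{\bbeta,k}$ itself need not lie in $W^+$: already in the Hardy case $\beta_j\equiv1$ one has $R_{\bbeta,k}(z)=1/(1-z)$, whose coefficients are not summable. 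So the whole point is that the quotient is far better behaved than the numerator, and the argument must capture the cancellation against $1/R_{\bbeta}\in W^+$ rather than treat the two factors separately.

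First I would record the shift recursion for the functions $R_{\bbeta,k}$. Directly from the definition \eqref{2.5} one has $R_{\bbeta,k+1}=S_{\bf 1}^{*}R_{\bbeta,k}$, and since $R_{\bbeta,k}(0)=\beta_k^{-1}$ this reads, as an identity of power series,
\begin{equation*}
zR_{\bbeta,k+1}(z)=R_{\bbeta,k}(z)-\beta_k^{-1}.
\end{equation*}
Dividing through by $R_{\bbeta}$ and abbreviating $Q_k:=R_{\bbeta,k}/R_{\bbeta}$, I obtain
\begin{equation*}
zQ_{k+1}=Q_k-\beta_k^{-1}\,\frac{1}{R_{\bbeta}}.
\end{equation*}
This is the engine of an induction on $k$, whose base case is immediate: $Q_0=R_{\bbeta}/R_{\bbeta}=1\in W^+$.

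For the induction step, suppose $Q_k\in W^+$. Since $1/R_{\bbeta}\in W^+$ by hypothesis, the right-hand side $Q_k-\beta_k^{-1}(1/R_{\bbeta})$ lies in $W^+$; moreover it has vanishing constant term, automatically, because it equals $zQ_{k+1}$ and so is divisible by $z$. (One may also see this directly: $Q_k(0)=R_{\bbeta,k}(0)/R_{\bbeta}(0)=\beta_k^{-1}$ while $(1/R_{\bbeta})(0)=1$ using $\beta_0=1$, so the constant terms cancel.) Hence $Q_{k+1}=S_{\bf 1}^{*}\bigl(Q_k-\beta_k^{-1}(1/R_{\bbeta})\bigr)\in W^+$ by the shift-invariance of $W^+$, completing the induction. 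The only subtlety — the ``main obstacle'' — is precisely the legitimacy of the division by $z$ at each stage, i.e. the constant-term cancellation; this is what encodes the fact that the apparent loss coming from $R_{\bbeta,k}\notin W^+$ is recovered in the quotient.

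Alternatively, I could give a closed form: from $R_{\bbeta}(z)=\sum_{j=0}^{k-1}\beta_j^{-1}z^j+z^kR_{\bbeta,k}(z)$ one gets $Q_k=z^{-k}\bigl(1-p_{k-1}/R_{\bbeta}\bigr)$ with $p_{k-1}(z)=\sum_{j=0}^{k-1}\beta_j^{-1}z^j$; here $1-p_{k-1}(1/R_{\bbeta})\in W^+$ and is divisible by $z^k$ (its product with $R_{\bbeta}$ equals $z^kR_{\bbeta,k}$, which has $z$-adic valuation exactly $k$ as $R_{\bbeta,k}(0)=\beta_k^{-1}\neq0$), so dividing by $z^k$ keeps it in $W^+$. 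I expect the inductive version to be the cleaner write-up.
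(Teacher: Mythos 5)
Your proof is correct. The power-series recursion $zR_{\bbeta,k+1}(z)=R_{\bbeta,k}(z)-\beta_k^{-1}$ follows at once from \eqref{2.5}, the base case $Q_0=1$ is trivial, and the induction step is sound: since $S_{\bf 1}^{*}(zg)=g$ for any power series $g$, the identity $zQ_{k+1}=Q_k-\beta_k^{-1}(1/R_\bbeta)$ gives $Q_{k+1}=S_{\bf 1}^{*}\bigl(Q_k-\beta_k^{-1}(1/R_\bbeta)\bigr)$, and $W^+$ is stable under linear combinations and under the backward shift; this disposes of the division-by-$z$ issue you correctly identify as the only delicate point.

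The paper's own proof takes a different route: instead of inducting, it writes down a closed form, defining $g_k=\sum_{\ell=1}^{k}\beta_{k-\ell}^{-1}\,S_{\bf 1}^{*\ell}(1/R_\bbeta)$, noting $g_k\in W^+$ by the triangle inequality, and then verifying $g_k=-R_{\bbeta,k}/R_\bbeta$ by interchanging a double sum and invoking the coefficient recursion $c_n=-\sum_{j=0}^{n-1}c_j\beta_{n-j}^{-1}$ of \eqref{1.8}. The two arguments are two faces of the same fact: unrolling your induction yields exactly $Q_k=-\sum_{\ell=1}^{k}\beta_{k-\ell}^{-1}S_{\bf 1}^{*\ell}(1/R_\bbeta)$, which is the paper's representation. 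What each buys: your version is coefficient-free, using only the Banach-space structure and shift-invariance of $W^+$ together with the trivial functional recursion, so it is shorter and never touches the $c_j$'s beyond the hypothesis $1/R_\bbeta\in W^+$. The paper's computation is heavier, but it produces the explicit coefficient formula \eqref{3.10}, namely $R_{\bbeta,k}/R_\bbeta=-\sum_{j\ge 0}\bigl(\sum_{\ell=1}^{k}c_{j+\ell}/\beta_{k-\ell}\bigr)z^j$, and that formula is not a mere by-product: it is exactly what is used in \eqref{3.5a} to define the operator maps $\Gamma^{(k)}_{\bbeta,A}$ and in the proof of Proposition \ref{P:2.2}. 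So if one adopts your proof, one still has to extract the explicit representation (e.g.\ by unrolling the induction as above) before the later constructions can proceed.
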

\begin{proof} Due to condition \eqref{1.8}, the function 
\begin{equation}
g_k(z)=\sum_{\ell=1}^{k}\frac{1}{\beta_{k-\ell}}\cdot\left(\sum_{j=0}^\infty 
c_{\ell+j}z^j\right)
\label{3.0}
\end{equation}
belongs to $W_+$ and moreover, $\|g_k\|_{W^+}\le 
{\displaystyle\left(\sum_{\ell=1}^k 
\beta_{k-\ell}^{-1}\right)\cdot\left\|\frac{1}{R_\bbeta}\right\|_{W^+}}$. 
Therefore,
the order of summation in \eqref{3.0} can be switched and we get, again making 
use of \eqref{1.8}, 
\begin{align}
g_k(z)&=\sum_{j=0}^\infty\left(\sum_{\ell=1}^{k}\frac{c_{j+\ell}}{\beta_{k-\ell}}\right)z^j
=-\sum_{j=0}^\infty\left(\sum_{\ell=0}^j\frac{c_{j-\ell}}{\beta_{k+\ell}}\right)z^j\notag\\
&=-\left(\sum_{j=0}^\infty
\frac{z^j}{\beta_{k+j}}\right)\cdot\left(\sum_{j=0}^\infty c_jz^j\right)
=-\frac{R_{\bbeta,k}(z)}{R_\bbeta(z)}
\label{3.10}
\end{align}
where the last equality holds due to \eqref{2.5} and \eqref{1.7}. This completes 
the proof, 
since $g_k$ is in $W_+$.
\end{proof}
Making use of the reciprocal power series \eqref{1.7} 
let us  introduce the operator map 
\begin{equation}   \label{3.5}
\Gamma_{\bbeta,A}=\frac{1}{R_{\bbeta}}(B_A) \colon X \mapsto \sum_{j=0}^\infty 
c_j\, A^{*j}XA^j,
\end{equation}
which, according to Remark \ref{R:3.1}, is well defined for any 
operator $X\in\cL(\cX)$ subject to inequalities \eqref{2.3a}. We next use  the 
power  series \eqref{3.10} to define a family of operator maps
\begin{equation}   \label{3.5a}
\Gamma^{(k)}_{\bbeta,A}=\frac{R_{\bbeta,k}}{R_{\bbeta}}(B_A) \colon X \mapsto 
-\sum_{j=0}^\infty\left(\sum_{\ell=1}^{k}\frac{c_{j+\ell}}{\beta_{k-\ell}}\right)
\, A^{*j}XA^j
\end{equation}
for all $k\ge 0$, which are again well defined for any operator $X\in\cL(\cX)$ 
subject to inequalities \eqref{2.3a}, by  Proposition \ref{P:3.2} 
and Remark \ref{R:3.1}. Observe that $\Gamma^{(0)}_{\bbeta,A}$ amounts to the identity 
mapping.

\begin{proposition} 
Let us assume that conditions \eqref{1.6}, \eqref{1.8} are in force and let 
$(C,A)$ be a $\bbeta$-output-stable pair. Then the gramian $\cG_{\bbeta,C,A}$ is subject 
to relations
\begin{equation}   \label{form5}
\cG_{\bbeta,C,A}\ge A^*\cG_{\bbeta,C,A}A\ge 0,\qquad 
\Gamma_{\bbeta,A}[\cG_{\bbeta,C,A}]=C^*C
\end{equation}
and
\begin{equation}   \label{form5a}
\Gamma^{(k)}_{\bbeta,A}[\cG_{\bbeta,C,A}]=\Gr^{(k)}_{\bbeta,C,A}
\ge 0 \quad\mbox{for all}\quad k\ge 1.
\end{equation}
\label{P:2.2}
\end{proposition}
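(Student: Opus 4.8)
The plan is to derive all three assertions from the series representation \eqref{3.1} together with the Cauchy-product structure relating $R_\bbeta$, $1/R_\bbeta$ and $R_{\bbeta,k}$. First I would record that $\cG_{\bbeta,C,A}$ itself satisfies the hypothesis \eqref{2.3a} needed to apply the operator maps $\Gamma_{\bbeta,A}$ and $\Gamma^{(k)}_{\bbeta,A}$. Using \eqref{3.1} one computes
\begin{equation*}
\cG_{\bbeta,C,A}-A^*\cG_{\bbeta,C,A}A=\beta_0^{-1}C^*C+\sum_{m=1}^\infty(\beta_m^{-1}-\beta_{m-1}^{-1})A^{*m}C^*CA^m,
\end{equation*}
and since the first condition in \eqref{1.6} forces $\beta_m\le\beta_{m-1}$, every coefficient $\beta_m^{-1}-\beta_{m-1}^{-1}$ is nonnegative; hence $\cG_{\bbeta,C,A}\ge A^*\cG_{\bbeta,C,A}A\ge 0$, which is the first relation in \eqref{form5} and also verifies \eqref{2.3a} for $X=\cG_{\bbeta,C,A}$. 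By Remark \ref{R:3.1} and Proposition \ref{P:3.2} the expressions $\Gamma_{\bbeta,A}[\cG_{\bbeta,C,A}]$ and $\Gamma^{(k)}_{\bbeta,A}[\cG_{\bbeta,C,A}]$ are therefore well defined with norm-convergent defining series.

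The core of both remaining identities is a single rearrangement, which I would carry out at the level of quadratic forms. Fixing $x\in\cX$ and using $\langle A^{*j}\cG_{\bbeta,C,A}A^jx,x\rangle=\sum_{k\ge 0}\beta_k^{-1}\|CA^{j+k}x\|^2$, the definition \eqref{3.5} of $\Gamma_{\bbeta,A}$ gives
\begin{equation*}
\langle\Gamma_{\bbeta,A}[\cG_{\bbeta,C,A}]x,x\rangle=\sum_{j=0}^\infty\sum_{k=0}^\infty c_j\,\beta_k^{-1}\|CA^{j+k}x\|^2.
\end{equation*}
The main obstacle is justifying the interchange of summation, since the inner series defining $\cG_{\bbeta,C,A}$ converges only strongly. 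This is handled by the estimate $\beta_{n-j}^{-1}\le\beta_n^{-1}$ (again from monotonicity of $\bbeta$): collecting terms with $j+k=n$ shows the double series is dominated by $\|1/R_\bbeta\|_{W^+}\sum_n\beta_n^{-1}\|CA^nx\|^2=\|1/R_\bbeta\|_{W^+}\langle\cG_{\bbeta,C,A}x,x\rangle<\infty$, so the sum is absolutely convergent and may be reindexed by the total degree $n=j+k$. The inner coefficient $\sum_{j=0}^n c_j\beta_{n-j}^{-1}$ is precisely the $n$-th Taylor coefficient of $R_\bbeta\cdot(1/R_\bbeta)=1$, which by the recursion \eqref{1.8} and $\beta_0=1$ equals $\delta_{n,0}$. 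Hence $\langle\Gamma_{\bbeta,A}[\cG_{\bbeta,C,A}]x,x\rangle=\|Cx\|^2=\langle C^*Cx,x\rangle$ for all $x$, and since quadratic forms determine an operator on a complex Hilbert space this yields $\Gamma_{\bbeta,A}[\cG_{\bbeta,C,A}]=C^*C$, completing \eqref{form5}.

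For \eqref{form5a} I would repeat the same computation with the coefficients $d^{(k)}_j=-\sum_{\ell=1}^k c_{j+\ell}/\beta_{k-\ell}$ of $R_{\bbeta,k}/R_\bbeta$ from \eqref{3.5a}; these are absolutely summable by Proposition \ref{P:3.2}, so the identical domination argument applies. Reindexing by $n=j+k$, the inner coefficient $\sum_{j=0}^n d^{(k)}_j\beta_{n-j}^{-1}$ is now the $n$-th Taylor coefficient of $(R_{\bbeta,k}/R_\bbeta)\cdot R_\bbeta=R_{\bbeta,k}$, namely $\beta_{k+n}^{-1}$ by \eqref{2.5} and \eqref{3.10}. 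Thus $\langle\Gamma^{(k)}_{\bbeta,A}[\cG_{\bbeta,C,A}]x,x\rangle=\sum_{n\ge 0}\beta_{k+n}^{-1}\|CA^nx\|^2=\langle\Gr^{(k)}_{\bbeta,C,A}x,x\rangle$, giving $\Gamma^{(k)}_{\bbeta,A}[\cG_{\bbeta,C,A}]=\Gr^{(k)}_{\bbeta,C,A}$; its positivity is immediate from the representation \eqref{defR} as a strongly convergent sum of the positive operators $\beta_{j+k}^{-1}A^{*j}C^*CA^j$. The only genuinely delicate point throughout is the absolute-convergence bookkeeping that licenses the two reindexings, and the estimate $\beta_{n-j}^{-1}\le\beta_n^{-1}$ is exactly what makes it go through.
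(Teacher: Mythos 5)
Your proof is correct and takes essentially the same route as the paper's: substitute the series representation \eqref{3.1} for the gramian into \eqref{3.5} and \eqref{3.5a}, interchange the order of summation, and collapse the resulting convolution coefficients via the recursion \eqref{1.8} (your generating-function identities $R_\bbeta\cdot(1/R_\bbeta)=1$ and $(R_{\bbeta,k}/R_\bbeta)\cdot R_\bbeta=R_{\bbeta,k}$ are a repackaging of the paper's direct double-sum manipulation). Two minor remarks: your explicit domination estimate using $\beta_{n-j}^{-1}\le\beta_n^{-1}$ and output stability actually justifies the interchange of summation more carefully than the paper's terse appeal to Remark \ref{R:3.1}, and the monotonicity $\beta_m\le\beta_{m-1}$ that you invoke comes from the \emph{third} condition in \eqref{1.6}, not the first.
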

\begin{proof} 
Since $\bbeta$ is non-increasing (by the third condition in \eqref{1.6}), it 
follows from 
the power series representation \eqref{3.1} that
$$
\cG_{\bbeta,C,A}- A^*\cG_{\bbeta,C,A}A=C^*C+\sum_{j=1}^\infty 
\left(\beta_j^{-1}-\beta_{j-1}^{-1}\right) A^{*j}C^*CA^j\ge C^*C\ge 0,
$$
which proves the first statement in \eqref{form5}. Therefore, 
$\Gamma_{\bbeta,A}[\cG_{\bbeta,C,A}]$
and $\Gamma^{(k)}_{\bbeta,A}[\cG_{\bbeta,C,A}]$ are well-defined and the series 
\eqref{3.5}, \eqref{3.5a} (with $X=\cG_{\bbeta,C,A}$) converge absolutely 
against any $x\in\cX$.
Therefore, we can substitute the power series representation \eqref{3.1} for 
$\cG_{\bbeta,C,A}$
in \eqref{3.5} and then change the order of summation. This leads us to the 
second equality in 
\eqref{form5}:
\begin{align*}
\Gamma_{\bbeta,A}[\cG_{\bbeta,C,A}]=&
\sum_{j=0}^\infty c_jA^{*j}\left(\sum_{r=0}^\infty 
\beta_r^{-1}A^{*r}C^*CA^r\right)A^j\\
=&\sum_{\ell=0}^\infty \left(\sum_{j=0}^\ell 
c_{j}\beta_{\ell-j}^{-1}\right)A^{*\ell}C^*CA^\ell=C^*C
\end{align*}
 where the last step follows from the recursion in \eqref{1.8}. 
The same recursion implies
\begin{align*}
\sum_{r=0}^j\frac{1}{\beta_{j-r}}\cdot 
\sum_{\ell=1}^k\frac{c_{\ell+r}}{\beta_{k-\ell}}&=
-\sum_{r=0}^j\frac{1}{\beta_{j-r}}\cdot \sum_{i=0}^r\frac{c_i}{\beta_{k+r-i}}\\
&=-\sum_{\ell=0}^j\frac{1}{\beta_{k+\ell}}\cdot\sum_{r=0}^{j-\ell}\frac{c_r}{\beta_{j-\ell-r}}
=-\frac{1}{\beta_{k+j}}
\end{align*}
which together with \eqref{3.1}, \eqref{3.5a} and \eqref{3.5} leads us to
\begin{align*}
\Gamma^{(k)}_{\bbeta,A}[\cG_{\bbeta,C,A}]&=-
\sum_{j=0}^\infty\left(\sum_{\ell=1}^{k}\frac{c_{j+\ell}}{\beta_{k-\ell}}\right)
\, A^{*j}\left(\sum_{r=0}^\infty \beta_r^{-1}A^{*r}C^*CA^r\right)A^j\\
&=-\sum_{j=0}^\infty\left(\sum_{r=0}^j\sum_{\ell=1}^k\frac{c_{\ell+r}}{\beta_{k-\ell}\beta_{j-r}}
\right)A^{*j}C^*CA^j\\
&=\sum_{j=0}^\infty\beta_{j+k}^{-1}\cdot A^{*j}C^*CA^j=\Gr^{(k)}_{\bbeta,C,A}
\end{align*}
which verifies \eqref{form5a} and thus completes the proof.\end{proof} 

\begin{proposition}
Let us assume that the weight sequence $\bbeta$ meets conditions \eqref{1.6},
\eqref{1.8} and let the operators $A, \, H\in\cL(\cX)$ meet the conditions
\begin{equation}
H\ge A^*HA\ge 0,\qquad \Gamma^{(k)}_{\bbeta,A}[H]\ge 0\quad\mbox{for all}\quad k\ge 1
\label{3.15}
\end{equation}
(the existence of operators $\Gamma^{(k)}_{\bbeta,A}[H]$ is justified in Remark \ref{R:3.1}).
Then
\begin{enumerate}
\item There exists the strong limit
\begin{equation}
\Delta_{A,H}=\lim_{k\to \infty}A^{*k}\Gamma^{(k)}_{\bbeta,A}[H]A^k\ge 0.
\label{ref7}
\end{equation}
\item If in addition, $\Gamma_{\bbeta,A}[H]\ge 0$, then the series below converges
in the strong operator topology and satisfies
\begin{equation}
\sum_{j=0}^\infty \beta_j^{-1}A^{*j}\Gamma_{\bbeta,A}[H]A^j= H-\Delta_{A,H}.
\label{ref5}
\end{equation}
\end{enumerate}
\label{P:3.5}
\end{proposition}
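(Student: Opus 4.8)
The engine of both parts is a single shifted Stein-type recursion for the maps $\Gamma^{(k)}_{\bbeta,A}$. Starting from the elementary identity $R_{\bbeta,k}(z)=\beta_k^{-1}+z\,R_{\bbeta,k+1}(z)$ read off from \eqref{2.5}, I would divide by $R_{\bbeta}$ and feed the result into the hereditary calculus $X\mapsto B_A[X]=A^*XA$ to obtain
\begin{equation*}
\Gamma^{(k)}_{\bbeta,A}[H]=\beta_k^{-1}\Gamma_{\bbeta,A}[H]+A^*\Gamma^{(k+1)}_{\bbeta,A}[H]A\qquad(k\ge 0),
\end{equation*}
all terms being well defined by Remark \ref{R:3.1} and \eqref{3.15}. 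Writing $\Phi_k:=A^{*k}\Gamma^{(k)}_{\bbeta,A}[H]A^k$ (so $\Phi_0=H$, since $\beta_0=1$ and $\Gamma^{(0)}_{\bbeta,A}$ is the identity) and conjugating by $A^{*k}(\cdot)A^k$ turns this into the telescoping relation
\begin{equation*}
\Phi_k-\Phi_{k+1}=\beta_k^{-1}A^{*k}\Gamma_{\bbeta,A}[H]A^k,\qquad\text{hence}\qquad \sum_{k=0}^{N-1}\beta_k^{-1}A^{*k}\Gamma_{\bbeta,A}[H]A^k=H-\Phi_N .
\end{equation*}
This is the discrete analogue of \eqref{7.3} for a general $H$, and it is the identity I would try to exploit throughout.

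Part (2) is then immediate, and I would dispatch it first. Under the extra hypothesis $\Gamma_{\bbeta,A}[H]\ge 0$ each increment $\Phi_k-\Phi_{k+1}=\beta_k^{-1}A^{*k}\Gamma_{\bbeta,A}[H]A^k$ is positive, so $\{\Phi_k\}$ is a non-increasing sequence of positive operators (each $\Phi_k\ge 0$ by \eqref{3.15}); a bounded decreasing sequence of self-adjoint operators converges strongly (see \cite[Problem 120]{halmosbook}), which produces the limit $\Delta_{A,H}\ge 0$ of \eqref{ref7}. The telescoping identity then shows the partial sums of $\sum_j\beta_j^{-1}A^{*j}\Gamma_{\bbeta,A}[H]A^j$ equal $H-\Phi_N$ and therefore converge strongly to $H-\Delta_{A,H}$, which is exactly \eqref{ref5}.

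Part (1) is the real content, because without a sign on $\Gamma_{\bbeta,A}[H]$ the sequence $\{\Phi_k\}$ need not be monotone, and the strong limit must be produced by a different route. The plan is to compare $\Phi_k$ with the manifestly monotone data $D_k:=A^{*k}HA^k$: the first inequality in \eqref{3.15} gives $D_k-D_{k+1}=A^{*k}(H-A^*HA)A^k\ge 0$, so $D_k\downarrow D_\infty\ge 0$ strongly. Expanding $\Gamma^{(k)}_{\bbeta,A}[H]=\sum_{j\ge 0}\gamma^{(k)}_j A^{*j}HA^j$ with $\{\gamma^{(k)}_j\}_j$ the (absolutely summable, by Proposition \ref{P:3.2}) Taylor coefficients of $R_{\bbeta,k}/R_{\bbeta}$, and using that $\sum_j\gamma^{(k)}_j=(R_{\bbeta,k}/R_{\bbeta})(1)=1$, I would rewrite
\begin{equation*}
\Phi_k-D_\infty=\sum_{j=0}^{\infty}\gamma^{(k)}_j\,(D_{j+k}-D_\infty)
\end{equation*}
and aim to show the right-hand side tends to $0$ strongly. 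The crucial arithmetic input is that $\beta_j\le 1$ forces $R_{\bbeta}(1)=\infty$, i.e. $\sum_j c_j=0$ for the reciprocal coefficients of \eqref{1.8}; this cancellation is what keeps the (only conditionally convergent) combination under control and, after a summation by parts against the decreasing family $D_{j+k}-D_\infty\downarrow 0$, is expected to reduce the limit to the telescoping series $\sum_p A^{*p}(H-A^*HA)A^p=H-D_\infty$.

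The step I expect to be the main obstacle is precisely this last convergence. The difficulty is that $\|\Gamma^{(k)}_{\bbeta,A}[H]\|$, equivalently $\sum_j|\gamma^{(k)}_j|$, is \emph{not} uniform in $k$: already for the Bergman weight $\beta_k=(k+1)^{-1}$ one has $R_{\bbeta,k}/R_{\bbeta}=(k+1)-kz$, so $\sum_j|\gamma^{(k)}_j|=2k+1\to\infty$, and no naive estimate of the form $\|(\Phi_k-D_\infty)x\|\le\|\gamma^{(k)}\|_{1}\,\|(D_k-D_\infty)x\|$ can succeed. Convergence must instead be extracted from the cancellations encoded in $\sum_j c_j=0$ together with the positivity constraints $\Gamma^{(k)}_{\bbeta,A}[H]\ge 0$ of \eqref{3.15}; it is exactly these hypotheses, rather than mere boundedness, that I would have to bring to bear in the summation-by-parts estimate in order to force the surviving boundary terms to vanish strongly.
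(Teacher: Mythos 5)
Everything you actually prove here coincides, step for step, with the paper's own proof: the paper derives the same recursion $\Gamma^{(j)}_{\bbeta,A}[H]=A^{*}\Gamma^{(j+1)}_{\bbeta,A}[H]A+\beta_j^{-1}\Gamma_{\bbeta,A}[H]$ from $R_{\bbeta,j}(z)=\beta_j^{-1}+zR_{\bbeta,j+1}(z)$, the same telescoping relation (its \eqref{ref1} and \eqref{3.18}, your $\Phi_k-\Phi_{k+1}=\beta_k^{-1}A^{*k}\Gamma_{\bbeta,A}[H]A^k$ with $\Phi_k:=A^{*k}\Gamma^{(k)}_{\bbeta,A}[H]A^k$), and then concludes by monotone convergence of a bounded decreasing sequence of positive operators. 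So your disposal of part (2) \emph{is} the paper's proof.

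The divergence is in part (1), and there you are the more careful party. The paper settles part (1) by asserting in \eqref{ref1} that the increments $\beta_j^{-1}A^{*j}\Gamma_{\bbeta,A}[H]A^j$ are positive semidefinite, so that $\{\Phi_k\}$ is decreasing and bounded below. But that positivity is exactly the additional hypothesis $\Gamma_{\bbeta,A}[H]\ge 0$ of part (2): it is not one of the operators $\Gamma^{(k)}_{\bbeta,A}[H]$ appearing in \eqref{3.15} and it does not follow from \eqref{3.15}. In other words, the paper's argument proves precisely what your proposal proves (part (2), and part (1) under the part-(2) hypothesis), and no more; you were right to refuse this step.

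Moreover, the obstacle you flag in your final paragraph is genuine in the strongest sense: part (1) is \emph{false} under \eqref{3.15} alone, so neither your summation-by-parts plan nor any other estimate can close the gap. Take your own Bergman weight $\beta_j=\frac{1}{j+1}$ (which satisfies \eqref{1.6} and \eqref{1.8}), for which $\frac{R_{\bbeta,k}}{R_\bbeta}(z)=(k+1)-kz$ and hence $\Gamma^{(k)}_{\bbeta,A}[H]=(k+1)H-kA^{*}HA=H+k(H-A^{*}HA)$. Let $A$ be the unilateral shift on $\ell^{2}$, $Ae_{j}=e_{j+1}$, put $g_{j}=1/j$ when $j$ is a power of $2$ and $g_{j}=0$ otherwise, and set $H=\operatorname{diag}(h_{j})_{j\ge 0}$ with $h_{j}=\sum_{i\ge j}g_{i}\le 2$. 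Then $A^{*}HA=\operatorname{diag}(h_{j+1})\ge 0$, $H-A^{*}HA=\operatorname{diag}(g_{j})\ge 0$, and $\Gamma^{(k)}_{\bbeta,A}[H]=\operatorname{diag}(h_{j}+kg_{j})\ge 0$ for all $k\ge 1$, so \eqref{3.15} holds in full; yet
\[
\bigl\langle A^{*k}\Gamma^{(k)}_{\bbeta,A}[H]A^{k}e_{0},\,e_{0}\bigr\rangle=h_{k}+kg_{k},
\]
which tends to $1$ along $k=2^{p}$ and to $0$ along the complementary sequence, so the limit \eqref{ref7} does not exist even weakly. (Consistently, $\Gamma_{\bbeta,A}[H]=\operatorname{diag}(g_{j}-g_{j+1})$ is not positive here, and in your comparison identity $\Phi_k-D_\infty=\sum_j\gamma^{(k)}_j(D_{j+k}-D_\infty)$ one has $D_\infty=0$ while $\Phi_k$ oscillates.) The correct repair is to add $\Gamma_{\bbeta,A}[H]\ge 0$ as a blanket hypothesis of the proposition; that hypothesis is available in every application made in the paper (in Theorem \ref{T:1.1} via \eqref{3.16}, and in Definition \ref{D:4} and Lemma \ref{L:3.12} with $H=I_{\cX}$ and $A$ $\bbeta$-hypercontractive), and under it your argument --- which is the paper's --- is complete and correct.
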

\begin{proof}
From definitions \eqref{1.2} and \eqref{2.6}, it follows that for every $j\ge 0$,
$$
R_{\bbeta,j}(z)=zR_{\bbeta,j+1}(z)+\beta_j^{-1}
$$
Dividing both parts by $R_\bbeta(z)$ and applying Agler hereditary functional calculus to the
resulting identity and to the chosen operator $H$ (this can be done thanks to  Remark
\ref{R:3.1} and  Proposition \ref{P:3.2}) gives the operator equality
\begin{align}
\Gamma^{(j)}_{\bbeta,A}[H]&=B_A\Gamma^{(j+1)}_{\bbeta,A}[H]+\beta_j^{-1} 
\Gamma_{\bbeta,A}[H]
\notag\\
&=A^*\Gamma^{(j+1)}_{\bbeta,A}[H]A+\beta_j^{-1}\Gamma_{\bbeta,A}[H] \notag
\end{align} 
where $\Gamma^{(j)}_{\bbeta,A}[H]$ is simply $H$ for the case $j=0$.
This in turn implies that
\begin{equation}
A^{*j}\Gamma^{(j)}_{\bbeta,A}[H]A^j-A^{*j+1}\Gamma^{(j+1)}_{\bbeta,A}[H]A^{j+1}
=\beta_j^{-1}A^{*j}\Gamma_{\bbeta,A}[H] A^j\ge 0.
\label{ref1}
\end{equation}
Therefore, the sequence of operators $A^{*k}\Gamma^{(k)}_{\bbeta,A}[H]A^k$ is decreasing and
bounded below and therefore has a strong limit \eqref{ref7}. 

\smallskip 

Summing up equalities in \eqref{ref1} for $j=0,\ldots,k$ and taking into account that
$\Gamma^{(0)}_{\bbeta,A}[H]=H$ we get
\begin{equation}
\sum_{j=0}^k\beta_j^{-1}\cdot
A^{*j} \Gamma_{\bbeta,A}[H] A^j=H-(A^*)^{k+1}\Gamma^{(k+1)}_{\bbeta,A}[H]A^{k+1}.
\label{3.18}
\end{equation}
Letting $k\to\infty$ in the latter equality and making use of \eqref{ref7} we 
arrive at \eqref{ref5} thus completing the proof.
\end{proof}
The following result gives connections between $\bbeta$-output stability,
observability gramians and solutions of associated Stein equalities
and inequalities. 
\begin{theorem}
\label{T:1.1}
Let us assume that the weight sequence $\bbeta$ meets conditions \eqref{1.6}, 
\eqref{1.8} and 
let $(C,A)\in\cL(\cX,\cY)\times\cL(\cX)$ be an output pair. Then:
\begin{enumerate}
\item The pair $(C,A)$ is $\bbeta$-output-stable if and only if there is
an $H\in \cL(\cX)$ subject to inequalities \eqref{3.15} and the Stein inequality
\begin{equation}
\Gamma_{\bbeta,A}[H]\ge C^*C.
\label{3.16}
\end{equation}
\item If $(C,A)$ is a $\bbeta$-output-stable pair, then the gramian
$H={\mathcal G}_{\bbeta, C, A}$ satisfies inequalities \eqref{3.15} and the Stein 
equality 
\begin{equation}
\Gamma_{\bbeta,A}[H]= C^*C
\label{3.17}   
\end{equation}
and is the minimal positive semidefinite solution of the system \eqref{3.15}, 
\eqref{3.16}.
           \end{enumerate}
           \end{theorem}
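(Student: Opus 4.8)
The plan is to deduce the entire statement from the two preparatory results already at hand: Proposition~\ref{P:2.2}, which exhibits the gramian as a solution of the Stein \emph{equality}, and Proposition~\ref{P:3.5}, whose telescoping identity \eqref{ref5} is the engine that converts a Stein \emph{inequality} into an operator comparison. I would treat part (2) first, as it is the more constructive half.

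For the first assertion of (2) I would simply invoke Proposition~\ref{P:2.2}: if $(C,A)$ is $\bbeta$-output-stable then $H=\cG_{\bbeta,C,A}$ satisfies the inequalities \eqref{3.15}, and the Stein equality \eqref{3.17} is precisely the second relation in \eqref{form5}. The substance of (2) is \emph{minimality}. Let $H\ge 0$ be any solution of \eqref{3.15}--\eqref{3.16}. Since $H$ obeys \eqref{3.15} and $\Gamma_{\bbeta,A}[H]\ge C^*C\ge 0$, Proposition~\ref{P:3.5}(2) applies and gives $\sum_{j\ge 0}\beta_j^{-1}A^{*j}\Gamma_{\bbeta,A}[H]A^j=H-\Delta_{A,H}$. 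Comparing termwise against the Stein inequality $\Gamma_{\bbeta,A}[H]\ge C^*C$, the left-hand series dominates $\sum_{j\ge 0}\beta_j^{-1}A^{*j}C^*CA^j=\cG_{\bbeta,C,A}$, so $H-\Delta_{A,H}\ge\cG_{\bbeta,C,A}$. Because $\Delta_{A,H}\ge 0$ by \eqref{ref7}, this yields $H\ge\cG_{\bbeta,C,A}$, establishing minimality.

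For part (1), the forward implication is immediate from (2): output-stability furnishes $H=\cG_{\bbeta,C,A}$ as a solution of \eqref{3.15}, with \eqref{3.16} holding as equality. For the converse, suppose some $H\ge 0$ satisfies \eqref{3.15}--\eqref{3.16}; the goal is to show $\cO_{\bbeta,C,A}$ is bounded, equivalently that the gramian series $\cG_{\bbeta,C,A}=\sum_{j\ge 0}\beta_j^{-1}A^{*j}C^*CA^j$ converges to a bounded operator. Again Proposition~\ref{P:3.5}(2) supplies the strongly convergent identity $\sum_{j\ge 0}\beta_j^{-1}A^{*j}\Gamma_{\bbeta,A}[H]A^j=H-\Delta_{A,H}\le H$. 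Since $\Gamma_{\bbeta,A}[H]\ge C^*C\ge 0$, each term of the gramian series is dominated by the corresponding positive term of this convergent series, so the increasing partial sums $\sum_{j=0}^{N}\beta_j^{-1}A^{*j}C^*CA^j$ are bounded above by $H$. A monotone-convergence argument for increasing bounded sequences of positive operators then gives strong convergence to a bounded $\cG_{\bbeta,C,A}\le H$, whence $\|\cO_{\bbeta,C,A}x\|^2=\langle\cG_{\bbeta,C,A}x,x\rangle\le\langle Hx,x\rangle$ and $(C,A)$ is $\bbeta$-output-stable.

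The one point demanding care---and the main obstacle---is ensuring that all the series manipulations are legitimate, i.e.\ that $\Gamma_{\bbeta,A}[H]$ is actually defined and that the relevant series converge strongly rather than merely formally. This is exactly where the Wiener-class hypothesis \eqref{1.8} and the inequalities \eqref{3.15} enter, through Remark~\ref{R:3.1} and Proposition~\ref{P:3.5}: the first relation in \eqref{3.15} is the hypothesis \eqref{2.3a} of Remark~\ref{R:3.1} guaranteeing that $\Gamma_{\bbeta,A}[H]$ converges in norm, while \eqref{3.15} in full is what drives the telescoping behind \eqref{ref5}. Once those are in place, positivity of the summands renders the termwise comparisons and the monotone passage to the limit routine.
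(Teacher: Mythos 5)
Your proposal is correct and takes essentially the same route as the paper's own proof: both rest on Proposition~\ref{P:2.2} for the equality half and on the telescoping structure of Proposition~\ref{P:3.5} to convert the Stein inequality into the domination $\cG_{\bbeta,C,A}\le H-\Delta_{A,H}\le H$, which yields output-stability and minimality simultaneously. The only cosmetic difference is that the paper works with the finite telescoping sums \eqref{3.18}--\eqref{3.19} and passes to the limit once, whereas you invoke the already-established infinite-series identity \eqref{ref5} of Proposition~\ref{P:3.5}(2) and compare the partial sums of the gramian series against it.
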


\begin{proof} Suppose first that $(C,A)$ is $\bbeta$-output-stable.
Then the infinite series in \eqref{3.1} converges strongly to the operator
$H={\mathcal G}_{\bbeta,C,A}$. By Proposition \ref{P:2.2}, $H$ satisfies 
relations \eqref{3.15}, \eqref{3.17} and hence, also \eqref{3.16}.

\smallskip

Conversely, suppose that \eqref{3.15}, \eqref{3.16} hold for some $H\in\cL(\cX)$.  
By Proposition \ref{P:3.5}, inequality \eqref{ref1} holds for all $k\ge 1$ 
which being combined with \eqref{3.16} gives
\begin{align}
\sum_{j=0}^k\beta_j^{-1}\cdot
A^{*j}C^*CA^j&\le \sum_{j=0}^k\beta_j^{-1}\cdot
A^{*j}\Gamma_{\bbeta,A}[H]A^j\\
&= H-(A^*)^{k+1}\Gamma^{(k+1)}_{\bbeta,A}[H]A^{k+1}
\label{3.19}
\end{align}
for all $k\ge 1$. By letting $k\to \infty$ in \eqref{3.19} we conclude that the 
left hand side  sum converges to a bounded positive semidefinite operator, which is 
$\cG_{\bbeta,C,A}$ by 
\eqref{3.1}. Thus, passing to the limit in \eqref{3.19} as $k\to \infty$ gives 
$\cG_{\bbeta,C,A}\le H-\Delta_{A,H}$ where $\Delta_{A,H}\ge 0$ is the limit defined in 
\eqref{ref7}. Therefore, the pair $(C,A)$ is $\bbeta$-output-stable 
and  $\cG_{\bbeta,C,A}$ is indeed the minimal positive semidefinite solution to the 
system \eqref{3.15}, \eqref{3.16}.  \end{proof}

\begin{definition}  \label{D:3.5}
Let us say that the operator $A\in\cL(\cX)$ is {\em $\bbeta$-contractive} if $A$ 
is a contraction and
$$
\Gamma_{\bbeta,A}[I_{_\cX}]:=\sum_{j=0}^\infty c_jA^{*j}A^j \ge 0.
$$
Let us say that $A$ is a {\em $\bbeta$-hypercontraction} if in 
addition
$$
\Gamma^{(k)}_{\bbeta,A}[I_{\cX}]:=
-\sum_{j=0}^\infty\left(\sum_{\ell=1}^{k}\frac{c_{j+\ell}}{\beta_{k-\ell}}\right)
\, A^{*j}A^j\ge 0 \quad\mbox{for all}\quad k\ge 1.
$$ 
\end{definition}

\begin{definition} \label{D:2}
A pair $(C,A)\in\cL(\cX,\cY)\times\cL(\cX)$ will be called
$\bbeta$-{\em contractive output pair} if the inequalities \eqref{3.15}, 
\eqref{3.16} hold with 
$H = I_{{\mathcal 
X}}$, that is, $A$ is $\bbeta$-hypercontractive and 
$$
\Gamma_{\bbeta,A}[I_{_\cX}]:=\sum_{j=0}^\infty c_jA^{*j}A^j \ge C^*C.
$$
The pair $(C,A)$ will be called $\bbeta$-{\em isometric} if  $A$ is 
$\bbeta$-hypercontractive and
\begin{equation}
\Gamma_{\bbeta,A}[I_{\cX}]:=\sum_{j=0}^\infty c_jA^{*j}A^j =C^*C.
\label{betaisom}
\end{equation}
\end{definition}

\begin{remark}  \label{R:betan} {\em
    Following the terminology from \cite{olieot, BBberg}, for $n$ a 
    positive integer and $A$ an operator on a Hilbert space $\cX$, we 
    say that {\em $A$ is an $n$-hypercontraction} if 
    $\Gamma_{k,A}[I_{\cX}] \ge 0$ for $k=1$ and $k=n$, where in 
    general, for $k$ an integer between $1$ and $n$ we set
    \begin{equation}   \label{defGammakA}
    \Gamma_{k,A} \colon H \mapsto (I - B_{A})^{k}[H] = \sum_{j=0}^{k} 
    (-1)^{j} \binom{k}{j} A^{*j} H A^{j}.  
    \end{equation}

    If $A$ is an $n$-hypercontraction, it then follows automatically 
    that $\Gamma_{k,A}[I_{\cX}] \ge 0$ for $1 < k < n$ as well (see 
    Lemma 4.3 in \cite{BBberg} or Lemma 1.1 in \cite{olieot}).  It is 
    of interest to compare this notion to that of 
    $\bbeta$-hypercontraction as given in Definition \ref{D:3.5} for the special choice of $\bbeta$ 
    with $\beta_{k} = \frac{k! (n-1)!}{(j+n-1)!}$ (or $\bbeta = 
    \bbeta_{\alpha}$ as in \eqref{1.4} with $\alpha = n$):   $A$ 
    is a $\bbeta_{n}$-hypercontraction if $A$ is a contraction (so 
    $\Gamma_{1,A}[I_{\cX}] \ge 0$) and $\Gamma_{\bbeta_{n},A}[I_{\cX}] 
    = \Gamma_{n,A}[I_{\cX}] \ge 0$ (so in fact $A$ is an 
    $n$-hypercontraction) and also
    \begin{equation}   \label{Gammabeta}
    \Gamma_{\bbeta_{n},A}^{(k)}[I_{\cX}] \ge 0\quad \text{for all}\quad k = 
    1,2, \dots.
    \end{equation}

    We here check that this last condition \eqref{Gammabeta} is 
    automatic for an $n$-hypercontraction and hence the class of 
    $n$-hypercontractions and the class of 
    $\bbeta_{n}$-hypercontractions are identical. To see this, we 
    recall the notation for the resolvent and shifted resolvent:
$$
R_{\bbeta_{n}}(z) =: R_{n}(z) = (1 - z)^{-n},\qquad R_{\bbeta_{n},k}(z)=: R_{n,k}(z)
$$
and the formula for the shifted resolvent in terms of unshifted 
resolvents (see \cite[Section 2]{BBberg}):
$$
R_{n,k}(z) = \sum_{\ell=1}^{n} \binom{\ell + k - 2}{\ell - 1} 
R_{n-\ell +1}(z).
$$
Hence specializing the formula \eqref{3.5a} to the case $\bbeta = 
\bbeta_{n}$ gives us
\begin{align*}
 \Gamma_{\bbeta_{n},A}^{(k)}(B_{A}) &  = \frac{R_{n,k}}{R_{n}}(B_{A})  \\
    & = \sum_{\ell = 1}^{n} \binom{\ell + k - 2}{\ell - 1} 
    \frac{R_{n-\ell +1}}{R_{n}}(B_{A}) \\
    & = \sum_{\ell = 1}^{n} \binom{\ell + k - 2}{\ell - 1} 
    \Gamma_{\ell - 1, A}(B_{A})  \\
    & = \sum_{\ell = 0}^{n-1} \binom{\ell + k - 1}{\ell } 
    \Gamma_{\ell, A}(B_{A}).
\end{align*}
Hence, if $A$ is an $n$-hypercontraction, then $\Gamma_{\ell, 
A}[I_{\cX}] \ge 0$ for $\ell =0, \dots, n-1$ and it follows that
$$
\Gamma_{\bbeta_{n},A}^{(k)}(B_{A})[I_{\cX}] = \sum_{\ell=0}^{n-1} 
\binom{\ell + k -1}{\ell} \Gamma_{\ell, A}(B_{A})[I_{\cX}] \ge 0
$$
for all $k=1,2,\ldots$. We conclude that indeed any $n$-hypercontraction is also a 
$\bbeta_{n}$-hypercontraction.
}\end{remark}

In addition we shall use the following standard terminology from systems theory.
An output pair $(C,A)$ is called {\em observable} if the operator
${\mathcal G}_{{\bf 1}, C, \bA}$ is injective.  A pair $(C,A)$ is called {\em exactly 
observable} if ${\mathcal G}_{{\bf 1}, C, A}$ is bounded and
is strictly positive definite.  We note that the assumptions 
\eqref{1.6} on the sequence $\bbeta$ imply that each term of the 
sequence is positive:  $\beta_{j} > 0$ for $j=0,1,2, \dots$; this 
combined with the formula 
\eqref{3.1} enables one to see that $\cG_{\bbeta,C,A}$ is injective if and only if 
$\cG_{{\mathbf 1}, C,A}$ is injective.  In Proposition 
\ref{P:exactbetaobs} below, we observe that the strict positivity of 
$\cG_{{\mathbf 1}, C, A}$ implies the strict positivity of 
$\cG_{\bbeta, C,A}$.  However the converse implication can fail in 
general (see \cite[Proposition 5.7]{BBberg}.  It therefore makes 
sense to introduce the following notion.

\begin{definition} \label{D:3.11}
    We will say that  the pair $(C,A)$ is {\em exactly $\bbeta$-observable} if the 
$\bbeta$-gramian 
$\cG_{\bbeta,C,A}$ is bounded and is strictly positive definite.

\end{definition}

Let us recall that an operator $A\in\cL(\cX)$ is called {\em strongly stable}
if $A^k$ tends to zero strongly: $\|A^kx\|\to 0$ as $k\to \infty$ for all $x\in\cX$.
We introduce the weighted analog of this notion for $\bbeta$-hypercontraction. 
\begin{definition} \label{D:4}
A $\bbeta$-hypercontractive operator $A\in\cL(\cX)$ will be called $\bbeta$-strongly stable 
if the limit \eqref{ref7} with $H=I_{\cX}$ equals zero, i.e., 
$$
\Delta_{A,I}=\lim_{k\to \infty}A^{*k}\Gamma^{(k)}_{\bbeta,A}[I]A^k=0,
$$
or, equivalently,
\begin{equation}
-\lim_{k\to \infty} 
\sum_{j=0}^\infty\left(\sum_{\ell=1}^{k}\frac{c_{j+\ell}}{\beta_{k-\ell}}\right)
\|A^{j+k}x\|^2=0\quad\mbox{for all}\quad x\in\cX.
\label{betastrong}
\end{equation}
\end{definition}
\begin{remark}
{\rm Observe that ${\bf 1}$-strong stability amounts to the usual strong stability.
Indeed, if $\bbeta={\bf 1}$, then $c_0=1$, $c_1=-1$
and $c_j=0$ for $j\ge 2$ (see \eqref{1.7}). Therefore, the sum   
${\displaystyle
\sum_{\ell=1}^{k}\frac{c_{j+\ell}}{\beta_{k-\ell}}}$ equals to $-1$ if $j=0$ and 
it is equal to zero otherwise. Hence the double sum on the left of \eqref{betastrong}
amounts to the single term $\|A^kx\|^2$.}
\label{R:3.10}
\end{remark}
\begin{lemma} (1) Suppose $(C,A)$ is a $\bbeta$-contractive pair. Then $(C,A)$ is 
$\bbeta$-output stable and $\cG_{\bbeta,C,A}\le I$ so that $\cO_{\bbeta,C,A}: \, \cX\to 
H^2_\bbeta(\cY)$ is a contraction.

\smallskip

(2) Suppose $(C,A)$ is a $\bbeta$-isometric pair and $A$ is $\bbeta$-strongly stable.
Then $\cG_{\bbeta,C,A}=I$, the operator $\cO_{\bbeta,C,A}: \, \cX\to
H^2_\bbeta(\cY)$ is isometric and hence also the pair $(C,A)$ is exactly $\bbeta$-observable.
\label{L:3.12}
\end{lemma}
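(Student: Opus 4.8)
The plan is to reduce both statements to results already in hand, principally Theorem~\ref{T:1.1} and Proposition~\ref{P:3.5}, by observing that the defining conditions of a $\bbeta$-contractive (respectively $\bbeta$-isometric) pair are exactly the hypotheses of those results specialized to $H=I_{\cX}$.

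For part~(1), I would first unwind Definition~\ref{D:2}: the assertion that $(C,A)$ is a $\bbeta$-contractive pair says precisely that $H=I_{\cX}$ satisfies the system \eqref{3.15} together with the Stein inequality \eqref{3.16}. Theorem~\ref{T:1.1}(1) then yields at once that $(C,A)$ is $\bbeta$-output-stable, so that $\cG_{\bbeta,C,A}$ is a bounded operator. Theorem~\ref{T:1.1}(2) identifies $\cG_{\bbeta,C,A}$ as the \emph{minimal} positive semidefinite solution of that same system; since $H=I_{\cX}$ is itself a solution, minimality forces $\cG_{\bbeta,C,A}\le I_{\cX}$. As $\cG_{\bbeta,C,A}=\cO_{\bbeta,C,A}^{*}\cO_{\bbeta,C,A}$, this is exactly the statement that $\|\cO_{\bbeta,C,A}x\|^{2}\le\|x\|^{2}$, i.e.\ that $\cO_{\bbeta,C,A}$ is a contraction.

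For part~(2), I would again read off from Definition~\ref{D:2} that a $\bbeta$-isometric pair satisfies \eqref{3.15} with $H=I_{\cX}$, but now with the Stein \emph{equality} $\Gamma_{\bbeta,A}[I_{\cX}]=C^{*}C$ of \eqref{betaisom}. The decisive step is to feed $H=I_{\cX}$ into Proposition~\ref{P:3.5}(2), whose hypotheses $\Gamma_{\bbeta,A}[I_{\cX}]=C^{*}C\ge0$ and \eqref{3.15} are now in force; its conclusion \eqref{ref5} reads
\begin{equation*}
\sum_{j=0}^{\infty}\beta_{j}^{-1}A^{*j}\,\Gamma_{\bbeta,A}[I_{\cX}]\,A^{j}=I_{\cX}-\Delta_{A,I}.
\end{equation*}
Substituting the isometric identity $\Gamma_{\bbeta,A}[I_{\cX}]=C^{*}C$ collapses the left-hand side into the series \eqref{3.1} that defines $\cG_{\bbeta,C,A}$ (and simultaneously delivers its strong convergence, hence $\bbeta$-output-stability), giving $\cG_{\bbeta,C,A}=I_{\cX}-\Delta_{A,I}$. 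The $\bbeta$-strong-stability hypothesis (Definition~\ref{D:4}) is exactly the assertion $\Delta_{A,I}=0$, whence $\cG_{\bbeta,C,A}=I_{\cX}$. From $\cO_{\bbeta,C,A}^{*}\cO_{\bbeta,C,A}=I_{\cX}$ I conclude that $\cO_{\bbeta,C,A}$ is isometric, and since $\cG_{\bbeta,C,A}=I_{\cX}$ is trivially bounded and strictly positive definite, $(C,A)$ is exactly $\bbeta$-observable in the sense of Definition~\ref{D:3.11}.

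I do not expect a serious obstacle: the substantive analytic work---strong convergence of the observability series and the telescoping Stein identity producing the remainder term $\Delta_{A,I}$---has already been carried out in Propositions~\ref{P:2.2} and \ref{P:3.5} and in Theorem~\ref{T:1.1}. The one point meriting care is the bookkeeping in part~(2): recognizing that the isometric equality $\Gamma_{\bbeta,A}[I_{\cX}]=C^{*}C$ is precisely what turns the general weighted sum in \eqref{ref5} into the gramian series \eqref{3.1}, after which the strong-stability hypothesis finishes the argument by annihilating the limit term $\Delta_{A,I}$.
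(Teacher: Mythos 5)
Your proposal is correct and follows essentially the same route as the paper: part (1) is exactly the paper's appeal to Theorem~\ref{T:1.1} (you merely make the minimality argument giving $\cG_{\bbeta,C,A}\le I$ explicit), and part (2) is the paper's argument verbatim in substance---apply Proposition~\ref{P:3.5}(2) with $H=I_{\cX}$, combine \eqref{betaisom} with \eqref{ref5} to identify the series with $\cG_{\bbeta,C,A}$, and use $\bbeta$-strong stability to conclude $\Delta_{A,I}=0$ and hence $\cG_{\bbeta,C,A}=I_{\cX}$.
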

\begin{proof}
The first statement follows from Theorem \ref{T:1.1}. To prove the second statement, we first observe that 
part (2) in Proposition \ref{P:3.5} applies to $H=I$. Combining \eqref{betaisom} and \eqref{ref5} (with 
$H=I$) now gives
$$
\sum_{j=0}^\infty \beta_j^{-1}A^{*j}C^*CA^j= I-\Delta_{A,I}.
$$
Since $\Delta_{A,I}=0$ and since the series on the left equals $\cG_{\bbeta,C,A}$, we conclude 
$\cG_{\bbeta,C,A}=I$ which completes the proof.\end{proof}

\section{Observability-operator range spaces and reproducing
        kernel Hilbert spaces}\label{S:NC-Obs}

Let $S_\bbeta$ denote the shift operator on $H^2_\bbeta(\cY)$ defined as 
$S_\bbeta: \, f(z)\to zf(z)$. Iterating the formula \eqref{1.5} for its adjoint 
gives 
\begin{equation}
S^{*k}_\bbeta f=\sum_{j=0}^\infty\frac{\beta_{j+k}}{\beta_j}\cdot f_{j+k}z^j.
\label{3.1a}
\end{equation}
\begin{lemma}
Let us assume that the weight sequence $\bbeta$ meets conditions \eqref{1.6},
\eqref{1.8}. Let $E: \, H^2_\bbeta(\cY)\to \cY$ be defined by $Ef=f(0)$. Then
\begin{enumerate}
\item $S_\bbeta^*$ is strongly stable, i.e.,
${\displaystyle\lim_{k\to \infty}\|S^{*k}_{\bbeta}f\|_{H^2_\bbeta(\cY)}=0\quad\mbox{for 
all}\quad f\in  H^2_\bbeta(\cY)}$.
\item The pair $(E, S_\bbeta^*)$ is $\bbeta$-output stable and the 
observability 
operator
$\cO_{\bbeta,E, S_\bbeta^*}$ equals the identity operator on $H^2_\bbeta(\cY)$. 
\item $S^*_\bbeta$ is $\bbeta$-hypercontractive and $\bbeta$-strongly stable. Moreover,
\begin{equation}
\Gamma_{\bbeta,S_\bbeta^*}[I_{H^2_\bbeta(\cY)}]=E^*E\quad\mbox{and}\quad\
\Gamma^{(k)}_{\bbeta,S_\bbeta^*}[I_{H^2_\bbeta(\cY)}]=\Gr^{(k)}_{\bbeta,E,S_\bbeta^*}
\label{3.1e}  
\end{equation}
for all $k\ge 1$, or equivalently,
\begin{align}
\sum_{j=1}^\infty c_j\|S_\bbeta^{*j}f\|_{H^2_\bbeta(\cY)}^2&=\|f(0)\|^2_{\cY},
\label{3.1g}\\
-\sum_{j=0}^\infty\left(\sum_{\ell=1}^{k}\frac{c_{j+\ell}}{\beta_{k-\ell}}\right)
\, \|S_\bbeta^{*j}f\|_{H^2_\bbeta(\cY)}^2&=\sum_{j=0}^\infty 
\frac{\beta_{j}^{2}}{\beta_{k+j}}\|f_j\|^2_{\cY}
\qquad (k\ge 1)\label{3.1h}
\end{align}
for all $f\in H^2_\bbeta(\cY)$, where $c_j$'s are given in \eqref{1.8}.
\end{enumerate}
\label{L:3.1}
\end{lemma}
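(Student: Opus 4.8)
The plan is to handle the three parts in order, using Part (2) as the computational linchpin that renders Part (3) nearly automatic. For Part (1), I would compute $\|S^{*k}_\bbeta f\|^2$ directly from the iterated adjoint formula \eqref{3.1a}: squaring and applying the norm \eqref{1.1} gives
\begin{equation*}
\|S^{*k}_\bbeta f\|^2_{H^2_\bbeta(\cY)} = \sum_{j=0}^\infty \frac{\beta_{j+k}^2}{\beta_j}\,\|f_{j+k}\|^2_\cY = \sum_{m=k}^\infty \frac{\beta_m^2}{\beta_{m-k}}\,\|f_m\|^2_\cY ,
\end{equation*}
and since $\bbeta$ is non-increasing (third condition in \eqref{1.6}) we have $\beta_{m-k}\ge\beta_m$, hence $\beta_m^2/\beta_{m-k}\le\beta_m$. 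Thus the quantity is dominated by the tail $\sum_{m\ge k}\beta_m\|f_m\|^2_\cY$ of the convergent series $\|f\|^2_{H^2_\bbeta(\cY)}$, which tends to $0$ as $k\to\infty$. For Part (2), I would evaluate the observability operator \eqref{0.6} on the pair $(E,S_\bbeta^*)$. The crucial point is that $E(S_\bbeta^*)^jf$ is the constant term of $S^{*j}_\bbeta f$, which by \eqref{3.1a} equals $\tfrac{\beta_j}{\beta_0}f_j=\beta_j f_j$ (using $\beta_0=1$). Hence the $j$-th Taylor coefficient of $\cO_{\bbeta,E,S_\bbeta^*}f$ is $\beta_j^{-1}E(S_\bbeta^*)^jf=f_j$, so $\cO_{\bbeta,E,S_\bbeta^*}=I$ on $H^2_\bbeta(\cY)$. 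Being the identity it is bounded, so $(E,S_\bbeta^*)$ is $\bbeta$-output stable and the gramian is $\cG_{\bbeta,E,S_\bbeta^*}=\cO^*\cO=I$.

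Everything in Part (3) then flows from the identity $\cG_{\bbeta,E,S_\bbeta^*}=I$. Substituting $H=\cG_{\bbeta,E,S_\bbeta^*}=I$, $C=E$, $A=S_\bbeta^*$ into Proposition \ref{P:2.2} gives at once the first relation of \eqref{form5} in the form $I\ge S_\bbeta S_\bbeta^*\ge 0$, the second relation as $\Gamma_{\bbeta,S_\bbeta^*}[I]=E^*E$, and \eqref{form5a} as $\Gamma^{(k)}_{\bbeta,S_\bbeta^*}[I]=\Gr^{(k)}_{\bbeta,E,S_\bbeta^*}\ge 0$ for $k\ge 1$; these are precisely the two operator identities \eqref{3.1e}. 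Since $S_\bbeta^*$ is a contraction (by \eqref{1.6}, as recorded just after that display) and $\Gamma_{\bbeta,S_\bbeta^*}[I]=E^*E\ge 0$ with all $\Gamma^{(k)}_{\bbeta,S_\bbeta^*}[I]\ge 0$, Definition \ref{D:3.5} shows $S_\bbeta^*$ is $\bbeta$-hypercontractive.

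For $\bbeta$-strong stability I would invoke Proposition \ref{P:3.5}(2) with $H=I$: its hypotheses \eqref{3.15} together with $\Gamma_{\bbeta,S_\bbeta^*}[I]\ge 0$ are exactly what was just verified, so \eqref{ref5} reads $\sum_{j\ge 0}\beta_j^{-1}S_\bbeta^jE^*ES_\bbeta^{*j}=I-\Delta_{S_\bbeta^*,I}$. But the left-hand series is $\cG_{\bbeta,E,S_\bbeta^*}=I$ by Part (2), forcing $\Delta_{S_\bbeta^*,I}=0$, which is $\bbeta$-strong stability in the sense of Definition \ref{D:4}. Finally, the scalar identities \eqref{3.1g}, \eqref{3.1h} are obtained by pairing the operator identities \eqref{3.1e} against $f$, using $\langle S_\bbeta^jS_\bbeta^{*j}f,f\rangle=\|S^{*j}_\bbeta f\|^2$ together with the already-noted fact that $\|ES^{*j}_\bbeta f\|^2=\beta_j^2\|f_j\|^2_\cY$, which converts $\langle\Gr^{(k)}_{\bbeta,E,S_\bbeta^*}f,f\rangle$ into $\sum_j \beta_j^2\beta_{k+j}^{-1}\|f_j\|^2_\cY$.

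The computations are routine; the one step demanding care, and effectively the crux, is the weight bookkeeping in Part (2), where the factors $\beta_j$ must cancel so that $\cO_{\bbeta,E,S_\bbeta^*}=I$ \emph{exactly} rather than merely boundedly. Once that identity is secured, Part (3) collapses to specializing Propositions \ref{P:2.2} and \ref{P:3.5} to the single choice $H=I$.
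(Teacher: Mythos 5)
Your proposal is correct, and for most of the lemma it runs along the same lines as the paper's own proof: the tail estimate for part (1), the weight cancellation $ES_\bbeta^{*j}f=\beta_jf_j$ giving $\cO_{\bbeta,E,S_\bbeta^*}=I$ in part (2), and the appeal to Proposition \ref{P:2.2} with $\cG_{\bbeta,E,S_\bbeta^*}=I$ for the operator identities \eqref{3.1e}. Where you genuinely diverge is the verification of $\bbeta$-strong stability. The paper proves it by direct computation: using \eqref{defRa}, \eqref{Obk-model} and \eqref{3.1a} it shows
\begin{equation*}
\langle S_{\bbeta}^{k}\Gamma^{(k)}_{\bbeta,S_\bbeta^*}[I]S_{\bbeta}^{*k}f, \, f\rangle_{H^{2}_{\bbeta}(\cY)}
=\sum_{j=k}^{\infty}\beta_{j}\|f_{j}\|^2_{\cY},
\end{equation*}
which tends to $0$ as the tail of a convergent series. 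You instead argue abstractly: the hypotheses of Proposition \ref{P:3.5}(2) with $H=I$, $A=S_\bbeta^*$ are exactly what parts (2)--(3) already supply, so \eqref{ref5} reads $\cG_{\bbeta,E,S_\bbeta^*}=I-\Delta_{S_\bbeta^*,I}$, and since the gramian equals $I$ you get $\Delta_{S_\bbeta^*,I}=0$ at once (note this reverses the direction in which the paper uses the same identity in Lemma \ref{L:3.12}(2), where $\Delta=0$ is the hypothesis and $\cG=I$ the conclusion). Both arguments are valid and non-circular, since Proposition \ref{P:3.5} is established before this lemma. Your route is shorter and reuses the Stein-identity machinery; the paper's computation is more self-contained and has the side benefit of producing the explicit quadratic-form formula above (a tail of $\|f\|^2$), which makes the rate and mechanism of the convergence transparent and is in the same spirit as the explicit identity \eqref{3.1h} the lemma records. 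One small point of bookkeeping in your favor: pairing $\Gamma_{\bbeta,S_\bbeta^*}[I]=E^*E$ against $f$ gives $\sum_{j=0}^\infty c_j\|S_\bbeta^{*j}f\|^2=\|f(0)\|^2$ with the sum starting at $j=0$ (as $c_0=1$); the lower limit $j=1$ in \eqref{3.1g} appears to be a typo in the paper, not a discrepancy in your argument.
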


\begin{proof}
If $f(z)={\displaystyle \sum_{k=0}^\infty f_kz^k}$ belongs to $H^2_\bbeta(\cY)$, 
then  by \eqref{1.1},
\begin{equation}
\sum_{j=k}^\infty \beta_j\|f_j\|^2_{\cY}\to 0\quad\mbox{as}\quad k\to\infty,
\label{ref8}
\end{equation}
and then it follows from \eqref{3.1a} and  \eqref{1.1} that 
\begin{align*}
\|S^{*k}_{\bbeta}f\|^2=\sum_{j=0}^\infty 
\frac{\beta_{j+k}^2}{\beta_{j}}\|f_{j+k}\|^2
&\le \sup_{j\ge 0}\frac{\beta_{j+k}}{\beta_j}\cdot \sum_{j=0}^\infty 
\beta_{j+k}\|f_{j+k}\|^2\notag\\
&\le \sum_{j=k}^\infty \beta_{j}\|f_{j}\|^2\to 0\quad\mbox{as}\quad k\to\infty
\end{align*}
which proves the first statement.  It follows from \eqref{3.1a} that 
$ES^{*j}_\bbeta f= \beta_j f_{j}$ and therefore,
$$
{\mathcal O}_{\bbeta,E,S_\bbeta^*}f=
ER_\bbeta(zS_\bbeta^*)f=\sum_{j=0}^\infty \beta_j^{-1} \left(E 
S_{\bbeta}^{*j}f\right) z^{j}= \sum_{j=0}^{\infty} f_{j} z^{j} = f
$$
which proves the second statement. Therefore $\cG_{\bbeta,E,S_\bbeta^*}=I$ and 
hence, equalities \eqref{3.1e} follow by Proposition \ref{P:2.2}.
The useful more general identity
\begin{equation}   \label{Obk-model}
    \Ob^{(k)}_{\bbeta,E,S_{\bbeta}^{*}} f = \sum_{j=0}^{\infty} 
    \frac{\beta_{j}}{\beta_{k+j}} f_{j}z^j
\end{equation}
follows from the computation
$$
\Ob^{(k)}_{\bbeta, E, S_{\bbeta}^{*}} f 
= \sum_{j=0}^{\infty} \frac{1}{\beta_{k+j}} \left( E 
S_{\bbeta}^{*j}f \right) z^{j} = \sum_{j=0}^{\infty} 
\frac{\beta_{j}}{\beta_{k+j}} f_{j} z^{j}.
$$

Next we note that the first operator equality in \eqref{3.1e} is 
equivalent to the quadratic-form identity
$$
\langle \Gamma_{\bbeta, S_{\bbeta}^{*}}[I_{H^{2}_{\bbeta}(\cY)}]f, f 
\rangle = \langle E^{*} E f, f \rangle_{H^{2}_{\bbeta}(\cY)}
$$
holding for all $f \in H^{2}_{\bbeta}(\cY)$ which in turn is 
equivalent to \eqref{3.1g}.    To verify the equivalence of the 
second operator identity in \eqref{3.1e} with the quadratic-form 
identity \eqref{3.1h}, note first that
the equality
$$ \langle\Gamma^{(k)}_{\bbeta,S_\bbeta^*}[I_{H^2_\bbeta(\cY)}]f,\, f 
\rangle_{H^{2}_{\bbeta}(\cY)} = 
-\sum_{j=0}^\infty\left(\sum_{\ell=1}^{k}\frac{c_{j+\ell}}{\beta_{k-\ell}}\right)
\, \|S_\bbeta^{*j}f\|_{H^2_\bbeta(\cY)}^2
$$
is immediate from the definitions, while the identity
\begin{equation}  \label{Grk-quadform}
\langle \Gr^{(k)}_{\bbeta, E, S_{\bbeta}^{*}} f, \, f 
\rangle_{H^{2}_{\bbeta}(\cY)} = 
\sum_{j=0}^{\infty}\frac{\beta_{j}^{2}}{\beta_{k+j}} 
\|f_{j}\|^{2}_{\cY}
\end{equation}
follows from the computation
\begin{align*}
    \langle \Gr^{(k)}_{\bbeta,E, S_{\bbeta}^{*}} f, \, f 
    \rangle_{H^{2}_{\bbeta}(\cY)} &
    = \left\| S_{\bbeta}^{k} \Ob^{(k)}_{\bbeta, E, S_{\bbeta}^{*}} f 
    \right\|^{2}_{H^{2}_{\bbeta}(\cY)} \text{ (using \eqref{defRa})} \\
    & = \left\| \sum_{j=0}^{\infty} \frac{\beta_{j}}{\beta_{k+j}} 
    f_{j} z^{j+k} \right\|^{2} \text{ (using \eqref{Obk-model})} \\
    & = \sum_{j=0}^{\infty} \beta_{j+k} \cdot \left( \frac{ 
    \beta_{j}}{\beta_{k+j}} \right)^{2} \|f_{j}\|^{2}_{\cY} \\
    & = \sum_{j=0}^{\infty} \frac{ \beta_{j}^{2}}{\beta_{j+k}} \| 
    f_{j} \|^{2}_{\cY}. 
\end{align*}
Combining \eqref{Grk-quadform} and \eqref{3.1a} gives
\begin{align*}
\langle \Gr^{(k)}_{\bbeta, E, S_{\bbeta}^{*}} S_{\bbeta}^{*k}f, \, S_{\bbeta}^{*k}f
\rangle_{H^{2}_{\bbeta}(\cY)}
&=\sum_{j=0}^{\infty}\frac{\beta_{j}^{2}}{\beta_{k+j}}
\|(S_{\bbeta}^{*k}f)_{j}\|^{2}_{\cY}\\
&=\sum_{j=0}^{\infty}\frac{\beta_{j}^{2}}{\beta_{k+j}}
\left\|\frac{\beta_{j+k}}{\beta_j}f_{j+k}\right\|^{2}_{\cY}\\
&=\sum_{j=0}^{\infty}\beta_{j+k}\cdot\|f_{j+k}\|^2_{\cY}=
\sum_{j=k}^{\infty}\beta_{j}\cdot\|f_{j}\|^2_{\cY}.
\end{align*}
This together with \eqref{ref8} and the second equality in 
\eqref{3.1e} implies that
\begin{align*}
\langle S_{\bbeta}^{k}\Gamma^{(k)}_{\bbeta,S_\bbeta^*}[I]S_{\bbeta}^{*k}f, \, 
f\rangle_{H^{2}_{\bbeta}(\cY)}&=\langle \Gr^{(k)}_{\bbeta, E, S_{\bbeta}^{*}} S_{\bbeta}^{*k}f, \, 
S_{\bbeta}^{*k}f\rangle_{H^{2}_{\bbeta}(\cY)}\\
&=\sum_{j=k}^{\infty}\beta_{j}\cdot\|f_{j}\|^2_{\cY}\to 
0\quad\mbox{as}\quad k\to\infty.
\end{align*}
This finally verifies $\bbeta$-strong stability of $ S_{\bbeta}$ and completes the proof.
\end{proof}

\smallskip

Associated with a $\bbeta$-output-stable pair $(C,A)$ is the range of the
observability operator
$$
\operatorname{Ran}{\mathcal O}_{\bbeta,C,A} =
\{ CR_\bbeta(zA)x \colon \; x \in {\mathcal X}\}.
$$

\begin{theorem} \label{T:1.2}
Suppose that $(C,A)$ is a $\bbeta$-output-stable pair. Then 
\begin{enumerate}
\item The intertwining relation
\begin{equation}
S_\bbeta^*{\mathcal O}_{\bbeta,C, A}={\mathcal O}_{\bbeta,C, A}A
\label{3.3}   
\end{equation}
holds and hence the linear manifold ${\mathcal M}=\operatorname{Ran}
{\mathcal O}_{\bbeta,C,A}$  is $S_\bbeta^*$-invariant.
\item If in addition, $(C,A)$ is exactly $\bbeta$-observable, then the linear 
manifold ${\mathcal 
M}:=\operatorname{Ran}{\mathcal O}_{\bbeta,C,A}$ given the lifted norm 
$$
\|{\mathcal O}_{\bbeta,C,A} x \|_{{\mathcal M}}^{2} =
\langle \cG_{\bbeta,C,A} x, x \rangle_{{\mathcal X}},
$$
is isometrically included in $H^2_\bbeta(\cY)$ and is isometrically equal to the 
reproducing  kernel Hilbert space with reproducing kernel 
$$
K_{\bbeta,C,A}(z,\zeta) = 
CR_\bbeta(zA)\cG_{\bbeta,C,A}^{-1}R_\bbeta(\zeta A)^* C^{*}.
$$
\item Conversely, if ${\mathcal M}$ is a Hilbert space isometrically
included in $H^2_\bbeta(\cY)$ which is invariant under $S_\bbeta^{*}$, 
then there is a $\bbeta$-output stable exactly $\bbeta$-observable pair $(C,A)$
such that ${\mathcal M} = {\mathcal H}(K_{\bbeta,C,A}) = 
\operatorname{Ran}{\mathcal
O}_{\bbeta,C,A}$ isometrically.
\end{enumerate}
\end{theorem}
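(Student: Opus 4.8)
The plan is to produce the required pair $(C,A)$ directly from the geometry of $\cM$ itself, mirroring the model computation already carried out in Lemma \ref{L:3.1}. Concretely, I would take the state space to be $\cX = \cM$, set $A = S_\bbeta^{*}|_{\cM}$, and set $C = E|_{\cM}$, where $E\colon f \mapsto f(0)$ is the evaluation operator from Lemma \ref{L:3.1}. Since $\cM$ is isometrically included its norm agrees with the ambient one, so $\cM$ is complete and hence closed, and $S_\bbeta^{*}$-invariance guarantees that $A$ maps $\cM$ into $\cM$; because $S_\bbeta^{*}$ is a contraction (by the third condition in \eqref{1.6}), $A$ is a contraction on $\cM$, so in particular $\rho(A) \le 1$ and $R_\bbeta(zA)$ is well defined for $z \in \D$. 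Thus $\cO_{\bbeta,C,A}$ makes sense.

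The key step is to identify $\cO_{\bbeta,C,A}$ as the inclusion map of $\cM$ into $H^2_\bbeta(\cY)$. For this I would reuse the identity $E S_\bbeta^{*j} f = \beta_j f_j$ (which follows from \eqref{3.1a} together with $\beta_0 = 1$) that was established in the proof of Lemma \ref{L:3.1}. Substituting it into the defining series \eqref{0.6} for $f = \sum_{j\ge 0} f_j z^j \in \cM$ yields
$$
\cO_{\bbeta,C,A}f = \sum_{j=0}^{\infty} \beta_j^{-1}\bigl(E S_\bbeta^{*j} f\bigr) z^j = \sum_{j=0}^{\infty} f_j z^j = f,
$$
so $\cO_{\bbeta,C,A}$ acts as the identity inclusion $\cM \hookrightarrow H^2_\bbeta(\cY)$.

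Once this is in hand, everything follows quickly. The inclusion is isometric (as $\cM$ is isometrically included), so $\cO_{\bbeta,C,A}$ is bounded, giving $\bbeta$-output stability, and $\cG_{\bbeta,C,A} = \cO_{\bbeta,C,A}^{*}\cO_{\bbeta,C,A} = I_{\cM}$, which is bounded and strictly positive definite; hence $(C,A)$ is exactly $\bbeta$-observable in the sense of Definition \ref{D:3.11}. The range of the inclusion is visibly all of $\cM$, and the lifted norm of part (2) collapses to the $\cM$-norm precisely because $\cG_{\bbeta,C,A} = I$. Applying part (2) of the present theorem — now legitimate since $(C,A)$ is $\bbeta$-output stable and exactly $\bbeta$-observable — then gives $\cM = \operatorname{Ran}\cO_{\bbeta,C,A} = \cH(K_{\bbeta,C,A})$ isometrically, as required.

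I expect no serious obstacle here: the whole argument is a transcription of the model case $\cM = H^2_\bbeta(\cY)$ from Lemma \ref{L:3.1} to an arbitrary isometrically-included $S_\bbeta^{*}$-invariant subspace, and the nontrivial analytic content (output stability of $(E,S_\bbeta^{*})$ and the evaluation identity) is already available. The only points demanding a moment of care are bookkeeping ones — verifying that $S_\bbeta^{*}|_{\cM}$ is genuinely bounded and contractive on $\cM$ in its own (ambient-agreeing) norm, and confirming that the formal series \eqref{0.6} converges in $H^2_\bbeta(\cY)$, which is immediate since its partial sums are the Taylor truncations of $f \in \cM \subseteq H^2_\bbeta(\cY)$.
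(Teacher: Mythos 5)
Your proposal addresses only part (3) of the theorem, and for that part it is correct and is essentially the paper's own route: the paper likewise takes $\cX = \cM$, $A = S_\bbeta^*|_{\cM}$, $C = E|_{\cM}$, and identifies $\cO_{\bbeta,C,A}$ with the inclusion map of $\cM$ into $H^2_\bbeta(\cY)$ (the paper gets this by citing part (2) of Lemma \ref{L:3.1}, whereas you re-derive the identity $\cO_{\bbeta,C,A}f = f$ on $\cM$ directly from $E S_\bbeta^{*j} f = \beta_j f_j$ --- the same computation, restricted to the subspace). Your observations that output stability is free from the isometric inclusion and that $\cG_{\bbeta,C,A} = I_{\cM}$ yields exact $\bbeta$-observability make explicit what the paper leaves implicit, and they are correct.

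The gap is that the statement has three parts and your proposal proves neither (1) nor (2); moreover, your final step explicitly invokes part (2), so it cannot simply be waived away --- as written, your argument for (3) rests on an unestablished claim. Both missing pieces are short but must be supplied. Part (1) is a term-by-term computation: by \eqref{0.6} and \eqref{1.5},
$$
S_\bbeta^* \cO_{\bbeta,C,A}x = \sum_{k=0}^\infty \beta_k^{-1}\bigl(CA^{k+1}x\bigr)z^k = \cO_{\bbeta,C,A}Ax,
$$
whence $\operatorname{Ran}\cO_{\bbeta,C,A}$ is $S_\bbeta^*$-invariant. Part (2) is the lifted-norm reproducing-kernel principle (the paper cites \cite{NFRKHS} for it): since $\cG_{\bbeta,C,A} = \cO_{\bbeta,C,A}^*\cO_{\bbeta,C,A}$ is strictly positive definite, $\cO_{\bbeta,C,A}$ is injective and the lifted norm is well defined; it coincides with the ambient norm because $\langle \cG_{\bbeta,C,A}x,x\rangle_{\cX} = \|\cO_{\bbeta,C,A}x\|^2_{H^2_\bbeta(\cY)}$ by the very definition of the gramian, giving the isometric inclusion; and writing $E_z$ for evaluation at $z$, so that $E_z \cO_{\bbeta,C,A} = CR_\bbeta(zA)$, one checks the reproducing property of the kernel $E_z \cO_{\bbeta,C,A}\, \cG_{\bbeta,C,A}^{-1}\, \cO_{\bbeta,C,A}^* E_\zeta^* = CR_\bbeta(zA)\cG_{\bbeta,C,A}^{-1}R_\bbeta(\zeta A)^*C^*$ directly: for $f = \cO_{\bbeta,C,A}x$ in $\cM$, $\langle f, \cO_{\bbeta,C,A}\cG_{\bbeta,C,A}^{-1}R_\bbeta(\zeta A)^*C^*y\rangle_{\cM} = \langle \cO_{\bbeta,C,A}x, E_\zeta^* y\rangle = \langle f(\zeta), y\rangle_{\cY}$. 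With (1) and (2) in place, your part (3) argument goes through verbatim.
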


\begin{proof}  
Making use of power series expansion \eqref{0.6} and of
\eqref{1.5} we get \eqref{3.3}:
$$
S_\bbeta^*{\mathcal O}_{\bbeta,C, A}x=S_\bbeta^*\sum_{k=0}^\infty \beta_k^{-1} 
(CA^{k}x)z^k
=\sum_{k=0}^\infty \beta_{k}^{-1}(CA^{k+1}x)z^k={\mathcal O}_{\bbeta,C,A}Ax.
$$
The second statement follows from the definition \eqref{3.1} and general 
principles 
laid out in \cite{NFRKHS} (see also \cite{BC, oljfa, BBberg} for 
applications very close to the context here).  To prove the last statement, 
observe that 
for ${\mathcal M}$ a Hilbert space isometrically included in $H^2_\bbeta(\cY)$, 
we may 
let $A=S_\bbeta^*\vert_{\cM}$ and let 
$C$ be defined by $Cf=f(0)$ for all $f\in\cM$. In other words $C=E\vert_{\cM}$. 
Then 
the pair $(C,A)$ is $\bbeta$-output stable and exactly $\bbeta$-observable. It 
follows from part (2) 
in Lemma \ref{L:3.1} that $\cM=\operatorname{Ran}{\mathcal
O}_{\bbeta,C,A}$. \end{proof}

Theorem \ref{T:1.2} leads to the following operator-model theorem.

\begin{theorem} \label{T:beta-stablemodel}  Suppose that the Hilbert 
    space operator $A \in \cL(\cX)$ is a $\bbeta$-strongly stable 
    $\bbeta$-hypercontraction.  Let $\cY$ be a coefficient Hilbert 
    space with $\operatorname{dim} \cY = \operatorname{rank} 
    \Gamma_{\bbeta,A}[I_{\cX}]$.  Then there is a subspace $\cN 
    \subset H^{2}_{\bbeta}(\cY)$ invariant under $S_{\bbeta}^{*}$ 
    so that $T$ is unitarily equivalent to $S_{\bbeta}^{*}|_{\cN}$.
\end{theorem}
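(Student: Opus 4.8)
The plan is to realize $A$ concretely as the compression of the backward shift $S_\bbeta^*$ to an invariant subspace by exhibiting an isometry $\cX \to H^2_\bbeta(\cY)$ that intertwines $A$ with $S_\bbeta^*$. The natural candidate for such an isometry is an observability operator $\cO_{\bbeta,C,A}$ for a suitably chosen output operator $C$, since Theorem \ref{T:1.2}(1) already guarantees the intertwining relation $S_\bbeta^* \cO_{\bbeta,C,A} = \cO_{\bbeta,C,A} A$ and the $S_\bbeta^*$-invariance of $\operatorname{Ran}\cO_{\bbeta,C,A}$ for any $\bbeta$-output-stable pair. Thus the entire task reduces to producing a $C$ for which $\cO_{\bbeta,C,A}$ is isometric, i.e., for which the gramian $\cG_{\bbeta,C,A}$ equals $I_\cX$.

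First I would construct $C$ by factoring the positive operator $\Gamma_{\bbeta,A}[I_\cX]$. Because $A$ is a $\bbeta$-hypercontraction, Definition \ref{D:3.5} gives $\Gamma_{\bbeta,A}[I_\cX]\ge 0$, so it admits a positive square root. Identifying $\cY$ with $\overline{\operatorname{Ran}}\,\Gamma_{\bbeta,A}[I_\cX]^{1/2}$ — a space of dimension exactly $\operatorname{rank}\Gamma_{\bbeta,A}[I_\cX]$, which is why the stated dimension hypothesis on $\cY$ is the right one — I set $C=\Gamma_{\bbeta,A}[I_\cX]^{1/2}\colon\cX\to\cY$, so that $C^*C=\Gamma_{\bbeta,A}[I_\cX]$. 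With this choice the pair $(C,A)$ is $\bbeta$-isometric in the sense of Definition \ref{D:2}: indeed $A$ is $\bbeta$-hypercontractive by hypothesis and $\Gamma_{\bbeta,A}[I_\cX]=C^*C$ by construction.

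Now I would invoke Lemma \ref{L:3.12}(2). Its hypotheses are precisely that $(C,A)$ is a $\bbeta$-isometric pair and that $A$ is $\bbeta$-strongly stable; the former was just arranged and the latter is part of the theorem's assumptions. The conclusion gives simultaneously that $\cG_{\bbeta,C,A}=I_\cX$ and that $\cO_{\bbeta,C,A}\colon\cX\to H^2_\bbeta(\cY)$ is isometric; in particular $\cO_{\bbeta,C,A}$ is bounded, so $(C,A)$ is $\bbeta$-output stable and Theorem \ref{T:1.2}(1) applies. Setting $\cN:=\operatorname{Ran}\cO_{\bbeta,C,A}$, which is closed because $\cO_{\bbeta,C,A}$ is isometric and $\cX$ is complete, Theorem \ref{T:1.2}(1) shows $\cN$ is $S_\bbeta^*$-invariant and that $\cO_{\bbeta,C,A}$ is a unitary from $\cX$ onto $\cN$ satisfying $S_\bbeta^*\cO_{\bbeta,C,A}=\cO_{\bbeta,C,A}A$. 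Hence $A=\cO_{\bbeta,C,A}^{-1}\,S_\bbeta^*|_{\cN}\,\cO_{\bbeta,C,A}$, which is the desired unitary equivalence (with $T=A$ in the statement).

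I expect no serious obstacle, as the result assembles machinery already in place; the one point demanding care is the role of $\bbeta$-strong stability. Dropping it would leave only the contractive estimate $\cG_{\bbeta,C,A}\le I_\cX$ from Lemma \ref{L:3.12}(1) (equivalently the minimality in Theorem \ref{T:1.1}), which would make $\cO_{\bbeta,C,A}$ merely a contraction rather than an isometry and would break the unitary equivalence. The remaining bookkeeping — matching $\dim\cY$ to $\operatorname{rank}\Gamma_{\bbeta,A}[I_\cX]$ so that $C$ has the correct target space — is routine.
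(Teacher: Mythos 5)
Your proposal is correct and follows essentially the same route as the paper's own proof: choose $C$ with $C^{*}C = \Gamma_{\bbeta,A}[I_{\cX}]$ so that $(C,A)$ is a $\bbeta$-isometric pair, invoke Lemma \ref{L:3.12}(2) together with $\bbeta$-strong stability to conclude $\cO_{\bbeta,C,A}$ is an isometry, and then use the intertwining relation of Theorem \ref{T:1.2}(1) to transport $A$ unitarily onto $S_{\bbeta}^{*}|_{\cN}$ with $\cN = \operatorname{Ran}\cO_{\bbeta,C,A}$. Your added remarks (closedness of the range, why the dimension hypothesis on $\cY$ is the right one, and what fails without $\bbeta$-strong stability) are accurate elaborations of details the paper leaves implicit.
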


\begin{proof} We choose $C \in \cL(\cX, \cY)$ so that $C^{*}C = 
    \Gamma_{\bbeta, A}[I_{\cX}]$.  Then $(C,Aâ$ is a 
    $\bbeta$-isometric pair.  By Lemma \ref{L:3.12}, the assumption 
    that $A$ is $\bbeta$-strongly stable guarantees us that 
    $\cO_{\bbeta,C,A} \colon \cX \to H^{2}_{\bbeta}(\cY)$ is 
    isometric.  We set $\cN = \operatorname{Ran} \cO_{\bbeta, C,A} 
    \subset H^{2}_{\bbeta}(\cY)$.  Then the intertwining property 
    \eqref{3.3} in Theorem \ref{T:1.2} leads to the conclusion that 
    $A$ is unitarily equivalent to $S_{\bbeta}^{*}|_{\cN}$ via the 
    unitary similarity transformation  $\cO_{\bbeta,C,A} \colon \cX 
    \to \cN$.
\end{proof}

\begin{remark}   \label{R:stable}  {\em 
    As a consequence of Theorem \ref{T:beta-stablemodel} combined 
    with part (1) of Lemma \ref{L:3.1}, we see that $\bbeta$-strong 
    stability of a $\bbeta$-hypercontraction $A$ implies its strong 
    stability in the usual sense ($\| A^{n} x \| \to 0$ as $n \to 
    \infty$ for each $x \in \cX$).  For the special case where 
    $\bbeta = \bbeta_{n}$ for a positive integer $n$ as in Remark 
    \ref{R:betan},  it is known that strong stability of the 
    $\bbeta_{n}$-hypercontraction $A$ implies that $A$ is unitarily 
    equivalent to the restriction of $S_{\bbeta_{n}}^{*}$ to an 
    invariant subspace $\cN \subset H^{2}_{\bbeta_{n}}(\cY) = 
    \cA_{n}(\cY)$ for a suitable coefficient Hilbert space $\cY$ as 
    in Theorem \ref{T:beta-stablemodel}, by results from \cite{BBberg}
    (see Theorem 5.3 part (2) there).  When this is combined with 
    part (3) of Lemma \ref{L:3.1}, we see that strong stability 
    implies $\bbeta_{n}$-strong stability for a 
    $\bbeta_{n}$-hypercontraction, and hence strong stability and 
    $\bbeta_{n}$-strong stability are equivalent for 
    $\bbeta_{n}$-hypercontractions. We have not been able to 
    determine at this time whether this equivalence holds for a 
    general weight satisfying the standing hypothesis \eqref{1.6}.
    } \end{remark}

We record here the following fact which will be useful in the sequel.  

\begin{proposition}  \label{P:exactbetaobs}  
    Suppose that the pair $(C,A)$ is $\bbeta$-output-stable and  exactly 
    $\bbeta$-observable (so $\Gr^{(0)}_{\bbeta,C,A} = \cG_{\bbeta,C,A}$ 
    is strictly positive definite).  Then it follows that 
    $\Gr^{(k)}_{\bbeta,C,A}$ is strictly positive definite for $k=1,2,3, 
    \dots$.
    \end{proposition}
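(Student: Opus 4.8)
The plan is to exploit directly the power-series representation of the shifted gramians together with the monotonicity of the weight sequence $\bbeta$; no induction and no appeal to the Stein identity \eqref{7.3} will be needed. First I would recall from \eqref{defR} that
$$
\Gr^{(k)}_{\bbeta,C,A} = \sum_{j=0}^\infty \beta_{j+k}^{-1}\, A^{*j}C^*C A^j,
$$
and in particular that the case $k=0$ recovers $\cG_{\bbeta,C,A}$ by \eqref{st2}. The series converges in the strong operator topology precisely because $(C,A)$ is $\bbeta$-output-stable, and each operator coefficient $A^{*j}C^*C A^j$ is positive semidefinite.

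The key observation is that the standing hypothesis \eqref{1.6} forces $\bbeta$ to be non-increasing: since $\beta_j/\beta_{j+1}\ge 1$, we have $\beta_{j+k}\le \beta_j$, and hence $\beta_{j+k}^{-1}\ge \beta_j^{-1}>0$ for every $j\ge 0$ and every $k\ge 0$. Comparing the two series term by term — each scalar coefficient in the series for $\Gr^{(k)}_{\bbeta,C,A}$ dominates the corresponding coefficient in the series for $\cG_{\bbeta,C,A}$, while the operator coefficients are positive semidefinite — the inequality holds for the finite partial sums and is preserved in the strong limit, yielding the operator inequality
$$
\Gr^{(k)}_{\bbeta,C,A}\ge \cG_{\bbeta,C,A}\qquad (k\ge 0).
$$

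Finally, exact $\bbeta$-observability means precisely that $\cG_{\bbeta,C,A}\ge \varepsilon I_{\cX}$ for some $\varepsilon>0$; combined with the displayed inequality this gives $\Gr^{(k)}_{\bbeta,C,A}\ge \varepsilon I_{\cX}$, so each $\Gr^{(k)}_{\bbeta,C,A}$ is strictly positive definite, with boundedness already guaranteed by \eqref{3.4a}. There is no genuine obstacle in this argument: the only point requiring care is the direction of the monotonicity of $\bbeta$ together with its reversal upon passing to reciprocals, which is exactly what makes the shifted gramians dominate, rather than be dominated by, $\cG_{\bbeta,C,A}$.
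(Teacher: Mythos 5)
Your proof is correct and follows essentially the same route as the paper's: both rest on the observation that the standing hypothesis \eqref{1.6} makes $\bbeta$ non-increasing, so that term-by-term comparison of the series \eqref{defR} yields $\Gr^{(k)}_{\bbeta,C,A} \ge \Gr^{(0)}_{\bbeta,C,A} = \cG_{\bbeta,C,A}$, and strict positivity then transfers immediately. The only cosmetic difference is that the paper phrases the comparison as the one-step monotonicity $\Gr^{(k)}_{\bbeta,C,A} \le \Gr^{(k+1)}_{\bbeta,C,A}$, while you compare $\Gr^{(k)}_{\bbeta,C,A}$ directly with $\cG_{\bbeta,C,A}$; the substance is identical.
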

    
    \begin{proof} 
	The standing assumption \eqref{1.6} gives us that 
	$\beta_{k+1} \le \beta_{k}$, i.e., $\frac{1}{\beta_{k}} \le 
	\frac{1}{\beta_{k+1}}$, for all $k=0,1,2,\dots$.  We then 
	read off from the formula \eqref{defR} that 
	$\Gr^{(k)}_{\bbeta,C,A} \le \Gr^{(k+1)}_{\bbeta,C,A}$, and in 
	particular $\cG_{\bbeta,C,A} = \Gr^{(0)}_{\bbeta,C,A} \le 
	\Gr^{(k)}_{\bbeta, C,A}$ for all $k=1,2,3,\dots$.  It now 
	follows that strict positivity of $\cG_{\bbeta,C,A}$ implies 
	strict positivity of $\Gr^{(k)}_{\bbeta,C,A}$ as asserted.
	\end{proof}

\section{Functions $\Theta_{k}$ and metric constraints }
\label{S:metric}

In this section we will study the transfer functions $\Theta_{k}$ introduced by 
the
realization formula \eqref{2.10} when the system operators $A$, $B_k$, $C$, $D_k$ 
satisfy certain 
additional metric constraints. By Proposition \ref{P:3.0}, for a $\bbeta$-output 
stable pair $(C,A)$,
the associated backward-shifted observability operators $\Ob^{(k)}_{\bbeta,C,A}$ 
are bounded for all $k\ge 
0$ as operators from $\cX$ into $H^2_\bbeta(\cY)$. In this case, the 
multiplication operator 
$M_{\Theta_{k}}$ given (according to \eqref{2.10}) by
$$
M_{\Theta_{k}}=\beta_k^{-1}\cdot D_k+ S_\bbeta \Ob^{(k+1)}_{\bbeta,C,A}B_k: \;
\cU_k\to H^2_\bbeta(\cY)
$$
is also bounded. Therefore, the output function $\widehat{y}$ in \eqref{2.8k},
\begin{equation}
\widehat y(z)=\cO_{\bbeta, C,A}x+\sum_{k=0}^{N}z^k \Theta_{k}(z)u_k
\label{jul12}
\end{equation}
belongs to $H^2_\bbeta(\cY)$ for every choice of $x\in\cX$ and
$u_k\in\cU_k$ for each $N\ge 1$. We next impose some
additional metric relations on $\left[ \begin{smallmatrix} A & B_{k}
\\ C & D_{k} \end{smallmatrix} \right]$, specifically one or more of
the relations
\begin{align}
A^*\Gr^{(k+1)}_{\bbeta,C,A}B_k+\beta_k^{-1}\cdot C^*D_k&=0,\label{jul13}\\
B_k^*\Gr^{(k+1)}_{\bbeta,C,A}
B_k+\beta_k^{-1}\cdot D_k^*D_k&\le I_{\cU_k},\label{jul14}\\
B_k^*\Gr^{(k+1)}_{\bbeta,C,A}B_k+\beta_k^{-1}\cdot D_k^*D_k&= 
I_{\cU_k},\label{jul14a}
\end{align}  
and show how these lead to boundedness and orthogonality properties
for the associated multiplication operator $M_{\Theta_{k}}$.
Due to equality \eqref{7.3}, it turns out that relations \eqref{jul13} and 
\eqref{jul14} are equivalent
to the matrix inequality
\begin{equation}   \label{contr}
  \begin{bmatrix} A^{*} & C^{*} \\ B_{k}^{*} & D_{k}^{*}
  \end{bmatrix} \begin{bmatrix} \Gr^{(k+1)}_{\bbeta,C,A} & 0 \\ 0 & 
\beta_k^{-1}\cdot  I_{\cY}
\end{bmatrix}   \begin{bmatrix} A & B_{k} \\ C & D_{k} \end{bmatrix}
 \le \begin{bmatrix} \Gr^{(k)}_{\bbeta,C,A} & 0 \\ 0 & I_{\cU_{k}} \end{bmatrix},
    \end{equation}
while the equalities \eqref{jul13} and \eqref{jul14a} are equivalent to the 
matrix equality
\begin{equation}   \label{isom}
  \begin{bmatrix} A^{*} & C^{*} \\ B_{k}^{*} & D_{k}^{*}
  \end{bmatrix} \begin{bmatrix} \Gr^{(k+1)}_{\bbeta,C,A} & 0 \\ 0 & 
\beta_k^{-1}\cdot  I_{\cY}
\end{bmatrix}   \begin{bmatrix} A & B_{k} \\ C & D_{k} \end{bmatrix}
 = \begin{bmatrix} \Gr^{(k)}_{\bbeta,C,A} & 0 \\ 0 & I_{\cU_{k}} \end{bmatrix}.
    \end{equation}
The two latter conditions are of metric nature; they express the
contractivity or isometric property of the  colligation operator $U_k =
\left[ \begin{smallmatrix} A & B_k \\ C & D_k \end{smallmatrix} \right]$
with respect to certain weights. 

\begin{lemma}   \label{L:5.1}
Let $(C,A)$ be a $\bbeta$-output stable pair and let $\Theta_{k}$ be defined
as in \eqref{2.10}  for some integer $k\ge 0$ and operators $B_k\in \cL(\cU_k, 
\cX)$ and $D_k\in
\cL(\cU_k, \cY)$.

\smallskip
\noindent
$(1)$  If equality \eqref{jul13} holds, then
\begin{itemize}
\item[(a)] $\cO_{\bbeta,C,A}x$ is orthogonal to $S_\bbeta^k\Theta_{k}u$ for all 
$x\in\cX$ and
$u\in\cU_k$.
\item[(b)] $S_\bbeta^k\Theta_{k}u$ is orthogonal to  
$S_\bbeta^m\Theta_{k}u^\prime$ for all $m>k$ and
$u,u^\prime\in\cU_k$.
\end{itemize}

\noindent
$(2)$ If \eqref{jul14} holds, then 
$S_{\bbeta}^{k} M_{\Theta_{k}}$ is a contraction from
    $\cU_{k}$ into $H^2_\bbeta(\cY)$.

\smallskip
\noindent
$(3)$ If \eqref{jul14a} holds, then    
$S_{\bbeta}^{k} M_{\Theta_{k}}$ is an isometry from $\cU_{k}$ into 
$H^2_\bbeta(\cY)$.

\smallskip
\noindent
$(4)$ If both \eqref{jul13} and \eqref{jul14} hold, i.e., if
    \eqref{contr} holds, then $S_{\bbeta}^{k}M_{\Theta_{k}}$ is a contraction 
from $H^{2}(\cU_{k})$ into  $H^2_\bbeta(\cY)$.

\smallskip
\noindent
$(5)$ If \eqref{jul13} and \eqref{jul14a} hold, i.e., if \eqref{isom} holds, then
    \begin{equation}   \label{SThetank-isom}
        \| S_{\bbeta}^{k} \Theta_{k}f \|^{2}_{H^2_\bbeta(\cY)} =
    \| f \|^{2}_{H^{2}(\cU_{k})} - \sum_{j=1}^{\infty } \| (I -
    S_{\bbeta}^{*}S_{\bbeta})^{1/2} S_{\bbeta}^{k} \Theta_{k} S_{{\bf 1}}^{*j} f 
\|^{2}
    \end{equation}
for every $f\in H^2(\cU_k)$, and 
\begin{align}
&  \beta_k^{-1}I_{\cU_{k}} - \Theta_{k}(z)^*
   \Theta_{k}(\zeta) =   \beta_{k}B_k^*R_{\bbeta,k}(z 
A)^*\Gr^{(k+1)}_{\bbeta,C,A}R_{\bbeta,k}(\zeta A)B_k
\notag \\
&  \quad  - \overline{z}\zeta \beta_{k}  B_k^*R_{\bbeta,k+1}(z 
A)^*\Gr^{(k)}_{\bbeta,C,A}R_{\bbeta,k+1}(\zeta
A)B_k \label{jul18}
\end{align}
for all $z,\zeta\in\D$.
\end{lemma}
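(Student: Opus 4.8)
The plan is to prove the two assertions of part (5) separately: the pointwise kernel identity \eqref{jul18} is a direct algebraic verification from the realization \eqref{2.10} and the colligation relations, while the norm identity \eqref{SThetank-isom} is a recursion-plus-telescoping argument built on the already-established parts (1)(b), (3) and (4) of the present lemma.

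For \eqref{jul18} I would start from $\Theta_k(z)=\beta_k^{-1}D_k+zCR_{\bbeta,k+1}(zA)B_k$ and expand the right-hand side using two tools. The first is the resolvent recursion $R_{\bbeta,k}(zA)=zAR_{\bbeta,k+1}(zA)+\beta_k^{-1}I$, the operator form of the scalar identity $R_{\bbeta,j}(z)=zR_{\bbeta,j+1}(z)+\beta_j^{-1}$ used in the proof of Proposition~\ref{P:3.5}; applying it to both factors in the first term of \eqref{jul18} produces, among others, a summand $\overline z\zeta\beta_k B_k^*R_{\bbeta,k+1}(zA)^*A^*\Gr^{(k+1)}_{\bbeta,C,A}A\,R_{\bbeta,k+1}(\zeta A)B_k$. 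The second tool is the Stein identity \eqref{7.3} in the form $\Gr^{(k)}_{\bbeta,C,A}=A^*\Gr^{(k+1)}_{\bbeta,C,A}A+\beta_k^{-1}C^*C$; substituting it into the second term of \eqref{jul18} produces the same $A^*\Gr^{(k+1)}_{\bbeta,C,A}A$ summand with the opposite sign. The key observation is that these two contributions cancel, leaving four cross terms; inserting the colligation relations \eqref{jul13} (as $A^*\Gr^{(k+1)}_{\bbeta,C,A}B_k=-\beta_k^{-1}C^*D_k$ and its adjoint) and \eqref{jul14a} (as $\beta_k^{-1}B_k^*\Gr^{(k+1)}_{\bbeta,C,A}B_k=\beta_k^{-1}I-\beta_k^{-2}D_k^*D_k$) then matches these cross terms term-by-term with the expansion of $\beta_k^{-1}I-\Theta_k(z)^*\Theta_k(\zeta)$. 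This step is routine bookkeeping; the only thing to watch is the cancellation just described.

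For \eqref{SThetank-isom} I fix $f\in H^2(\cU_k)$ with coefficients $f_j$ and set $h_j:=S_{\bf 1}^{*j}f$, so that $h_0=f$ and the one-step splitting $h_j=f_j+zh_{j+1}$ holds. Applying $S_\bbeta^kM_{\Theta_k}$ and using $z\cdot(\,\cdot\,)=S_\bbeta(\,\cdot\,)$ gives $S_\bbeta^k\Theta_k h_j=S_\bbeta^k\Theta_k f_j+S_\bbeta^{k+1}\Theta_k h_{j+1}$. Expanding $h_{j+1}$ in monomials, every summand of the second piece has the form $S_\bbeta^m\Theta_k u'$ with $m>k$, so by the orthogonality in part (1)(b) it is orthogonal to $S_\bbeta^k\Theta_k f_j$; combined with the constant-input isometry $\|S_\bbeta^k\Theta_k f_j\|^2=\|f_j\|^2$ from part (3), this yields
\begin{equation*}
a_j=\|f_j\|^2+b_{j+1},\qquad a_j:=\|S_\bbeta^k\Theta_k h_j\|^2,\quad b_j:=\|S_\bbeta^{k+1}\Theta_k h_j\|^2 .
\end{equation*}
Since $\langle(I-S_\bbeta^*S_\bbeta)g,g\rangle=\|g\|^2-\|S_\bbeta g\|^2$, the $j$-th defect term on the right of \eqref{SThetank-isom} is exactly $a_j-b_j$. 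Summing over $j\ge1$ and telescoping the $b$-terms gives $\sum_{j\ge1}(a_j-b_j)=\sum_{j\ge1}\|f_j\|^2-b_1+\lim_{N\to\infty}b_{N+1}$, so that $\|f\|^2-\sum_{j\ge1}(a_j-b_j)=\|f_0\|^2+b_1-\lim_{N\to\infty}b_{N+1}$, which equals $a_0=\|S_\bbeta^k\Theta_k f\|^2$ by the recursion at $j=0$, provided $\lim_{N\to\infty}b_{N+1}=0$.

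The main obstacle is exactly this last convergence point, and I would dispatch it using the boundedness from part (4). Since \eqref{jul13} together with \eqref{jul14a} implies \eqref{contr}, the operator $S_\bbeta^kM_{\Theta_k}$ is a contraction from $H^2(\cU_k)$ into $H^2_\bbeta(\cY)$, and $S_\bbeta$ is itself contractive on $H^2_\bbeta(\cY)$; hence $b_N\le\|S_\bbeta^k\Theta_k h_N\|^2\le\|h_N\|^2=\sum_{i\ge N}\|f_i\|^2\to0$. The same estimate bounds the partial sums of the nonnegative defect series, so the telescoping is legitimate, $\lim_{N}b_{N+1}=0$, and \eqref{SThetank-isom} follows.
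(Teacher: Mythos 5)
Your proposal proves only part (5), invoking parts (1)(b), (3) and (4) as known facts; since those assertions are themselves part of the lemma, strictly speaking you have given a proof of (5) conditional on (1)--(4). This mirrors the paper's own structure (its proof of (5) is built directly on the displays \eqref{8.4}--\eqref{8.3iterate} from its proof of (4)), so the dependence is legitimate, but a self-contained answer to the statement as posed would need arguments for the first four parts as well.

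Within that scope the argument is correct, and the two halves compare with the paper as follows. For \eqref{jul18} you are doing exactly the paper's computation, run backwards: the paper expands $\beta_k^{-1}I_{\cU_k}-\Theta_k(z)^*\Theta_k(\zeta)$, substitutes \eqref{7.3}, \eqref{jul13}, \eqref{jul14a}, and then regroups via $R_{\bbeta,k}(zA)=\beta_k^{-1}I+zAR_{\bbeta,k+1}(zA)$, while you expand the right-hand side of \eqref{jul18} with the same two identities and match cross terms; the cancellation of the terms $\overline{z}\zeta\beta_k B_k^*R_{\bbeta,k+1}(zA)^*A^*\Gr^{(k+1)}_{\bbeta,C,A}A\,R_{\bbeta,k+1}(\zeta A)B_k$ that you single out is the same pivot in both directions, so I would count this half as essentially the paper's proof. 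For \eqref{SThetank-isom} your route is a genuine, if modest, variant. The one-step recursion $a_j=\|f_j\|^2+b_{j+1}$, obtained from (1)(b) plus (3), is identical to the paper's \eqref{8.4}; the difference lies in the limit passage. The paper iterates the recursion on a polynomial of degree $m$, where it terminates after finitely many steps, and then ``lets $m\to\infty$,'' leaving the continuity of both sides in $f$ implicit. You instead telescope directly on a general $f\in H^2(\cU_k)$ and close the argument with the explicit tail bound $b_{N+1}\le\|S_\bbeta^k M_{\Theta_k}S_{\bf 1}^{*(N+1)}f\|^2\le\|S_{\bf 1}^{*(N+1)}f\|^2\to 0$, which uses part (4) together with the contractivity of $S_\bbeta$. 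What your version buys is an explicit justification of the convergence step that the paper compresses into one phrase. The one point you should spell out in a final write-up is that the orthogonality of $S_\bbeta^k\Theta_k f_j$ to the infinite sum $S_\bbeta^{k+1}\Theta_k h_{j+1}$ follows from (1)(b) applied to each monomial term \emph{together with} convergence of that series in $H^2_\bbeta(\cY)$ (again a consequence of part (4)), since orthogonality passes to limits only along convergent sums.
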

\noindent
{\em Proof of (1):} We first observe the power series expansion
\begin{equation}
\Theta_{k}(z)=\beta_{k}^{-1}D_k+\sum_{j=0}^\infty\beta_{j+k+1}^{-1} CA^jB_kz^{j+1}
\label{jul15}
\end{equation}
which is an immediate consequence of formulas \eqref{2.10} and \eqref{2.6}. We 
then make use of
expansions \eqref{0.6}, \eqref{jul15} and the definition of the inner product in 
$H^2_\bbeta(\cY)$
to prove part (a):
\begin{align*} 
&\left\langle S_\bbeta^{k}\Theta_{k} u, \, 
\cO_{\bbeta,C,A}x\right\rangle_{H^2_\bbeta(\cY)}=
\beta_{k}\cdot\left\langle \beta_{k}^{-1}D_ku, \, 
\beta_{k}^{-1}CA^{k}x\right\rangle_{\cY}  \\
&\qquad+\sum_{j=0}^\infty\beta_{j+k+1}\cdot \left\langle
\beta_{j+k+1}^{-1}C A^{j} B_k u, \, \beta_{j+k+1}^{-1}CA^{j+k+1} 
x\right\rangle_{\cY}  \\
&=\left\langle \left(\beta_k^{-1} C^*D_k+ A^*\left(
\sum_{j=0}^\infty \beta_{j+k+1}^{-1} A^{*j}C^*CA^{j}\right)B_k\right)u,\,
A^kx\right\rangle_{\cX}  \\
&=\left\langle \left(\beta_k^{-1}C^*D_k+
A^*\Gr^{(k+1)}_{\bbeta,C,A}B_k\right)u,\, A^kx\right\rangle_{\cX}=0  
\end{align*}
where the two last equalities are justified by \eqref{defR} and \eqref{jul13}, 
respectively.
Verification of part $(b)$ is quite similar: for $m>k$ we have
\begin{align*}  
&\left\langle S_\bbeta^{m}\Theta_{k} u^\prime,\, 
S_\bbeta^{k}\Theta_{k}u\right\rangle_{H^2_\bbeta(\cY)}=
\beta_{m}\cdot\left\langle  \beta_{k}^{-1}D_ku^\prime, \;
\beta_{m}^{-1}CA^{m-k-1}B_k u\right\rangle_{\cY}  \\
&\qquad+\sum_{j=0}^\infty\beta_{j+m+1}\cdot \left\langle
\beta_{j+k+1}^{-1}C A^{j} B_k u^\prime, \,
\beta_{j+m+1}^{-1}CA^{j+m-k}B_k u\right\rangle_{\cY}  \\
&=\left\langle \left(\beta_k^{-1} C^*D_k+A^*\left(
\sum_{j=0}^\infty \beta_{j+k+1}^{-1}A^{*j}C^*CA^{j}\right)B_k
\right)u^\prime,\, A^{m-k-1}B_ku\right\rangle_{\cX}  \\
&=\left\langle \left(\beta_k^{-1} C^*D_k+A^*\Gr^{(k+1)}_{\bbeta,C,A}B_k\right)
u^\prime, \,  A^{m-k-1}B_ku\right\rangle_{\cX} =0.
\end{align*}

\smallskip
\noindent
{\em Proof of (2) and (3):} According to \eqref{jul14},
\begin{equation}
\beta_k^{-1}\cdot \| D_ku\|^2_{\cY}+\left\langle\Gr^{(k+1)}_{\bbeta,C,A}B_ku, \; 
B_ku\right\rangle_{\cX}\le
\|u\|^2_{\cU_k}
\label{jul20}
\end{equation}
for all $u\in\cU_k$.
We now may make use of \eqref{defRa} and \eqref{jul20} to get
\begin{align}
\| S_\bbeta^{k}\Theta_{k} u\|^2_{H^2_\bbeta(\cY)}&=
\|(\beta_k^{-1}S_\bbeta^k D_k+ S^{k+1}_\bbeta 
\Ob^{(k+1)}_{\bbeta,C,A}B_k)u\|^2_{H^2_\bbeta(\cY)}\notag\\
&=\|\beta_k^{-1}S_\bbeta^k D_ku\|^2_{H^2_\bbeta(\cY)}+\|S^{k+1}_\bbeta 
\Ob^{(k+1)}_{\bbeta,C,A}B_ku\|^2_{H^2_\bbeta(\cY)}\notag\\
&=\beta_k^{-1}\| D_ku\|^2_{\cY}+\left\langle\Gr^{(k+1)}_{\bbeta,C,A}B_ku, \; 
B_ku\right\rangle_{\cX}
\le \|u\|^2_{\cU_k}.\notag
\end{align}
Thus, $S_\bbeta^k M_{\Theta_{k}}$  is a contraction from $\cU_k$ to
$H^2_\bbeta(\cY)$. In case \eqref{jul14a} holds,  then
\eqref{jul20} holds with equality and part (3) follows.

\smallskip
\noindent
{\em Proof of (4):} Under the
assumption that both \eqref{jul13} and \eqref{jul14} hold, we shall show that for 
any
$\cU_k$-valued polynomial  $f(z)={\displaystyle \sum_{j=0}^m f_jz^j}$,
\begin{equation}
\|S_\bbeta^{k}\Theta_{k} f\|^2_{H^2_\bbeta(\cY)}\le
\|f\|_{H^2(\cU_k)}^2=\sum_{j=0}^m\|f_j\|_{\cU_k}^2.
\label{8.3}
\end{equation}
Let $S_{\bf 1}^*$ be the operator of backward shift on $H^2(\cU_k)$ so that for 
the
polynomial $f$ as above, $(S_1^*f)(z)={\displaystyle \sum_{j=0}^{m-1} 
f_{j+1}z^j}$. By   
statements (1b) and (2) of the lemma, we have
\begin{align}
\left\|S_\bbeta^{k}\Theta_{k} f 
\right\|^2=&\left\|\sum_{j=0}^mS_\bbeta^{k+j}\Theta_{k}
f_j\right\|^2\notag\\
=&\left\|S_\bbeta^{k}\Theta_{k}f_0 \right\|^2+
\left\|\sum_{j=1}^mS_\bbeta^{k+j}\Theta_{k}f_j \right\|^2   \text{ (by (1b))} 
\notag\\
\le &\left\|f_0 \right\|^2+
\left\|S_\bbeta^{k+1}\sum_{j=0}^{m-1}S_\bbeta^{j}\Theta_{k}f_{j+1}\right\|^2\notag\\
=&\|f_0\|^2+\|S_\bbeta^{k+1}\Theta_{k} S_{\bf 1}^*f\|^2\notag\\
=&\left\|f_0 \right\|^2+
\left\|S_\bbeta^{k}\Theta_{k} S_{\bf 1}^*f \right\|^2- 
\left\|(I-S_\bbeta^*S_\bbeta)^{\frac{1}{2}}S_\bbeta^{k}
\Theta_{k}S_1^*f\right\|^2.\label{8.4}
\end{align}
Replacing $f$ by $S_{\bf 1}^{*j}f$ in \eqref{8.4} gives
\begin{align}
\|S_\bbeta^{k}\Theta_{k} S_{\bf 1}^{*j}f\|^2\le 
\|f_j\|^2&+\|S_{\bbeta}^k\Theta_{k}
(S_{\bf 1}^{*})^{j+1}f\|^2\notag\\
&-\|(I-S_\bbeta^*S_\bbeta)^{\frac{1}{2}}S_\bbeta^{k}\Theta_{k} (S_1^*)^{j+1}f\|^2
 \label{8.3'}
\end{align}
for $j=1,\ldots,m$.  Iteration of the inequality \eqref{8.4} using
\eqref{8.3'} then gives
\begin{align}
\|S_\bbeta^{k}\Theta_{k} f\|^2_{H^2_\bbeta(\cY)}
&\le 
\sum_{j=0}^m\|f_j\|_{\cU_k}^2-\sum_{j=1}^m\|(I-S_\bbeta^*S_\bbeta)^{\frac{1}{2}}S_\bbeta^{k}
\Theta_{k}S_{\bf 1}^{*j}f\|^2 \label{8.3iterate}\\
&\le \sum_{j=0}^{m} \| f_{j}\|^{2}_{\cU_{k}}.\notag
\end{align}
Letting $m \to \infty$ in \eqref{8.3iterate} now implies the validity
of \eqref{8.3} for every $f\in H^2(\cU_k)$ and the proof of (4) is now complete.

\smallskip
\noindent
{\em Proof of (5):}  In case \eqref{jul13} and \eqref{jul14a} hold, 
    then \eqref{8.4} holds with equality as well as in \eqref{8.3}, \eqref{8.3'}, 
and \eqref{8.3iterate}. Equality \eqref{SThetank-isom} now follows by letting $m
    \to \infty$ in \eqref{8.3iterate}.	

\smallskip

    It remains to verify the formula \eqref{jul18} under assumption
    \eqref{isom}.  The identity \eqref{isom} is equivalent to 
    \eqref{7.3}, \eqref{jul13} and \eqref{jul14a}.
We use these relations to compute
\begin{align*}
   & \beta_k^{-1}I_{\cU_{k}} - \Theta_{k}(z)^{*}\Theta_{k}(\zeta) \\
   &=  \beta_k^{-1}I_{\cU_{k}}-
    \left[ \beta_k^{-1}D_{k}^{*} + \overline{z} B_{k}^{*}
    R_{\bbeta,k+1}(zA)^{*} C^{*} \right]
    \left[  \beta_k^{-1}D_{k} + \zeta C R_{\bbeta,k+1}(\zeta A) B_{k}
    \right]  \\
    &   = \beta_k^{-1}I_{\cU_{k}} -    \beta_k^{-2}D_{k}^{*}D_{k}- 
\overline{z}\beta_k^{-1} 
B_{k}^{*} R_{\bbeta,k+1}(zA)^{*}C^{*}D_{k} \\
&\qquad -\zeta \beta_k^{-1}D_{k}^{*}C R_{\bbeta,k+1}(\zeta A) B_{k} 
- \overline{z} \zeta B_{k}^{*} R_{\bbeta,k+1}(zA)^{*}
    C^{*}C R_{\bbeta,k+1}(\zeta A) B_{k}  \\
    &  = \beta_k^{-1} B_{k}^{*} \Gr^{(k+1)}_{\bbeta,C,A} B_{k} +
    \overline{z} B_{k}^{*} R_{\bbeta,k+1}(zA)^{*}A^* \Gr^{(k+1)}_{\bbeta,C,A} 
B_{k} \\
    & \qquad 
     + \zeta B_{k}^{*} \Gr^{(k+1)}_{\bbeta,C,A} A R_{\bbeta,k+1}(\zeta A) B_{k}  
\\
     & \qquad 
    - \overline{z} \zeta \beta_k B_{k}^{*} R_{\bbeta,k+1}(zA)^{*}
    \left(\Gr^{(k)}_{\bbeta,C,A} - A^{*} \Gr^{(k+1)}_{\bbeta,C,A}A \right) 
R_{\bbeta,k+1}(\zeta 
A) 
B_{k}
    \end{align*}
    where we made use of  \eqref{7.3}, \eqref{jul13} and \eqref{jul14a} in the 
last step.
  By making use next of the relation 
$$
 R_{\bbeta,k}(zA)=\beta_k^{-1}I+zA R_{\bbeta,k+1}(zA)
$$
we can continue the
  computation as
  \begin{align*}
   & \beta_k^{-1}I_{\cU_{k}} - \Theta_{k}(z)^{*} \Theta_{k}(\zeta)  \\
   &= \beta_k^{-1} B_{k}^{*} \Gr^{(k+1)}_{\bbeta,C,A} B_{k} + B_{k}^{*}
   \left( R_{\bbeta,k}(zA)^{*} - \beta_k^{-1}I \right)
   \Gr^{(k+1)}_{\bbeta,C,A} B_{k}  \\
   & \quad + B_{k}^{*} \Gr^{(k+1)}_{\bbeta,C,A} \left(
   R_{\bbeta,k}(\zeta A) - \beta_k^{-1}I \right) B_{k} \\
   & \quad + \beta_{k} B_{k}^{*} \left( R_{\bbeta,k}(zA)^{*} -
   \beta_k^{-1}I \right) \Gr^{(k+1)}_{\bbeta,C,A} \left( R_{\bbeta,k}(\zeta
   A) - \beta_k^{-1}I \right) B_{k}  \\
   & \quad - \beta_{k} \overline{z} \zeta B_{k}^{*} R_{\bbeta,k+1}(zA)^{*}
   \Gr^{(k)}_{\bbeta,C,A} R_{\bbeta,k+1}(\zeta A) B_{k} \\
   & = \beta_{k} B_{k}^{*} R_{\bbeta,k}(zA)^{*} \Gr^{(k+1)}_{\bbeta,C,A} 
R_{\bbeta,k}(\zeta
   A) B_{k} \\
   &\quad- \overline{z} \zeta\beta_k B_{k}^{*}
   R_{\bbeta,k+1}(zA)^{*} \Gr^{(k)}_{\bbeta,C,A} R_{\bbeta,k+1}(\zeta A) B_{k}
   \end{align*}
   verifying formula \eqref{jul18}.\qed
     
\smallskip        
        
The following result is an immediate consequence of Lemma \ref{L:5.1}.

\begin{corollary}   \label{C:5.3}
Let us assume that the pair $(C,A)$ is $\bbeta$-output stable and that relations 
\eqref{jul13}, 
\eqref{jul14}  hold for all $k\ge 0$. Then the representation \eqref{jul12} of 
the
function $\widehat{y}$ is orthogonal in the metric of $H^2_\bbeta(\cY)$ and
\begin{equation}
\|\widehat y\|^2_{H^2_\bbeta(\cY)}=\|\cO_{\bbeta,C,A}x\|^2+\sum_{k=0}^\infty
\|\Theta_{k}u_k\|^2\le
\|\cG_{\bbeta,C,A}^{\frac{1}{2}}x\|^2_{\cX}+\sum_{k=0}^\infty \|u_k\|^2_{\cU_k}.
\label{jul17}
\end{equation}
If relations \eqref{jul14} hold with equalities for all $k\ge 0$, then equality 
holds in \eqref{jul17}.
\end{corollary}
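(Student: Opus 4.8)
The plan is to read off $\|\widehat y\|^2$ from the representation \eqref{jul12} by showing that its summands — the term $\cO_{\bbeta,C,A}x$ together with the terms $S_\bbeta^k\Theta_k u_k = z^k\Theta_k(z)u_k$, $k\ge 0$ — are pairwise orthogonal in $H^2_\bbeta(\cY)$, then invoking the Pythagorean theorem and the contractivity bounds of Lemma \ref{L:5.1}. I would first work with a finite upper limit $N$ in \eqref{jul12} and pass to $N\to\infty$ only at the end.

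For the orthogonality, that $\cO_{\bbeta,C,A}x$ is orthogonal to every $S_\bbeta^k\Theta_k u_k$ is exactly Lemma \ref{L:5.1}(1a). It remains to see that $S_\bbeta^k\Theta_k u_k$ and $S_\bbeta^m\Theta_m u_m$ are orthogonal whenever $k\ne m$. This is the one place where Lemma \ref{L:5.1} does not literally apply, since part (1b) compares two shifts of the \emph{same} function $\Theta_k$, whereas here two \emph{distinct} transfer functions occur; I regard this cross-index step as the main (if modest) obstacle. The computation in the proof of (1b) nonetheless carries over verbatim: assuming $m>k$ and inserting the power-series expansions \eqref{jul15} into $\langle S_\bbeta^m\Theta_m u_m,\,S_\bbeta^k\Theta_k u_k\rangle_{H^2_\bbeta(\cY)}$, the overlapping powers $z^m,z^{m+1},\dots$ collapse — after recognizing $\sum_{j\ge 0}\beta_{m+1+j}^{-1}A^{*j}C^*CA^j=\Gr^{(m+1)}_{\bbeta,C,A}$ as in \eqref{defR} — to
\begin{equation*}
\big\langle\big(\beta_m^{-1}C^*D_m+A^*\Gr^{(m+1)}_{\bbeta,C,A}B_m\big)u_m,\;A^{m-k-1}B_k u_k\big\rangle_{\cX},
\end{equation*}
which vanishes by relation \eqref{jul13} taken at the larger index $m$. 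Thus only \eqref{jul13} is needed, exactly as in (1b).

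Granting pairwise orthogonality, the Pythagorean theorem gives, for each finite $N$,
\begin{equation*}
\|\widehat y\|^2_{H^2_\bbeta(\cY)}=\|\cO_{\bbeta,C,A}x\|^2+\sum_{k=0}^{N}\|S_\bbeta^k\Theta_k u_k\|^2 ,
\end{equation*}
where $\|S_\bbeta^k\Theta_k u_k\|^2$ is the quantity abbreviated $\|\Theta_k u_k\|^2$ in \eqref{jul17}; this is the equality assertion there. I would then bound the two pieces: $\|\cO_{\bbeta,C,A}x\|^2=\langle\cG_{\bbeta,C,A}x,x\rangle=\|\cG_{\bbeta,C,A}^{1/2}x\|^2$ directly from $\cG_{\bbeta,C,A}=\cO_{\bbeta,C,A}^*\cO_{\bbeta,C,A}$, while Lemma \ref{L:5.1}(2) gives $\|S_\bbeta^k\Theta_k u_k\|^2\le\|u_k\|^2_{\cU_k}$ since \eqref{jul14} holds for every $k$. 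Summing yields \eqref{jul17} for finite $N$; the partial sums $\sum_{k=0}^N S_\bbeta^k\Theta_k u_k$ then have orthogonal increments with norms bounded by $\sum_k\|u_k\|^2_{\cU_k}$, so they form a Cauchy sequence and one passes to $N\to\infty$ by continuity of the norm.

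Finally, for the equality clause, if \eqref{jul14} holds with equality for all $k$ (i.e.\ \eqref{jul14a}), then Lemma \ref{L:5.1}(3) upgrades each contraction bound to the isometry $\|S_\bbeta^k\Theta_k u_k\|^2=\|u_k\|^2_{\cU_k}$; substituting into the Pythagorean identity turns the inequality in \eqref{jul17} into an equality, since \eqref{jul13} (and hence the orthogonality) still holds. The only genuinely new ingredient beyond citing Lemma \ref{L:5.1} is the cross-index orthogonality computation displayed above.
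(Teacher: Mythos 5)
Your proposal is correct, and it follows exactly the route the paper intends: the paper offers no written proof, asserting only that the corollary ``is an immediate consequence of Lemma \ref{L:5.1},'' i.e.\ orthogonality from part (1) plus the norm bounds from parts (2)--(3) and Pythagoras. Your one substantive addition is precisely the detail the paper glosses over: Lemma \ref{L:5.1}(1b) only compares two shifts of the \emph{same} $\Theta_k$, and the cross-index orthogonality $S_\bbeta^m\Theta_m u_m \perp S_\bbeta^k\Theta_k u_k$ ($m>k$) does require rerunning that computation; your version checks out, since the surviving weights $\beta_{j+m+1}^{-1}$ assemble into $\Gr^{(m+1)}_{\bbeta,C,A}$ and the resulting expression $\bigl\langle\bigl(\beta_m^{-1}C^*D_m+A^*\Gr^{(m+1)}_{\bbeta,C,A}B_m\bigr)u_m,\,A^{m-k-1}B_ku_k\bigr\rangle_{\cX}$ vanishes by \eqref{jul13} at the larger index $m$, which the hypothesis (valid for all $k\ge 0$) supplies. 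The remaining steps --- $\|\cO_{\bbeta,C,A}x\|^2=\|\cG_{\bbeta,C,A}^{1/2}x\|^2$ always holding with equality, the contraction/isometry dichotomy from \eqref{jul14} versus \eqref{jul14a}, and the passage $N\to\infty$ via orthogonal partial sums --- are all handled correctly.
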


Observe that in case the pair $(C,A)$ is exactly $\bbeta$-observable 
(so $\Gr^{(k)}_{\bbeta,C,A}$ is strictly positive definite for all $k \ge 
0$ by Proposition \ref{P:exactbetaobs}),
the inequality \eqref{contr} can equivalently be expressed as 
$\| \Xi \| \le 1$ where $\Xi: \; \sbm{\cX \\ \cU_{k}}\to \sbm{\cX \\ \cY}$ 
is the operator given by
\begin{equation}   \label{pr7}
 \Xi: = \begin{bmatrix} \left(\Gr^{(k+1)}_{\bbeta,C,A}\right)^{1/2} & 0 \\ 0
 & \beta_k^{-\frac{1}{2}}I_{\cY} \end{bmatrix}
 \begin{bmatrix} A & B_{k} \\ C & D_{k} \end{bmatrix}
     \begin{bmatrix} \left(\Gr^{(k)}_{\bbeta,C,A}\right)^{-1/2} & 0 \\ 0 &
         I_{\cU_{k}} \end{bmatrix}.
 \end{equation}
An equivalent condition is that $\| \Xi^{*} \| \le 1$ which in
turn can be expressed as
$$
    \begin{bmatrix} A & B_{k} \\ C & D_{k} \end{bmatrix}
        \begin{bmatrix} \left(\Gr^{(k)}_{\bbeta,C,A}\right)^{-1} & 0 \\ 0 & 
I_{\cU_k} 
\end{bmatrix}
        \begin{bmatrix} A^{*} & C^{*} \\ B_{k}^{*} & D_{k}^{*}
        \end{bmatrix}\le  \begin{bmatrix} 
\left(\Gr^{(k+1)}_{\bbeta,C,A}\right)^{-1} & 0 \\ 0 &
        \beta_{k} I_{\cY} \end{bmatrix}.
$$
 Note that equality \eqref{isom} means that the operator $\Xi$ is isometric.
Of  particular interest  is the case where $\Xi$ is coisometric, i.e., where
the colligation operator $U_k= \left[ \begin{smallmatrix} A & B_k \\ C & 
D_k \end{smallmatrix} 
\right]$ is coisometric with respect to the weights indicated below:
\begin{equation}   \label{wghtcoisom}
    \begin{bmatrix} A & B_{k} \\ C & D_{k} \end{bmatrix}
        \begin{bmatrix} \left(\Gr^{(k)}_{\bbeta,C,A}\right)^{-1} & 0 \\ 0 & 
I_{\cU_k} 
\end{bmatrix}
        \begin{bmatrix} A^{*} & C^{*} \\ B_{k}^{*} & D_{k}^{*}
        \end{bmatrix} = \begin{bmatrix} 
\left(\Gr^{(k+1)}_{\bbeta,C,A}\right)^{-1} & 0 \\ 0 &
        \beta_{k} I_{\cY} \end{bmatrix}.
\end{equation}

\begin{lemma}   \label{L:frakcoisom}
Let $(C,A)$ be an exactly $\bbeta$-observable $\bbeta$-output stable pair and let 
$\Theta_{k}$ 
be defined as in \eqref{2.10}  for some operators $B_k\in \cL(\cU_k, \cX)$ and 
$D_k\in 
\cL(\cU_k)$ subject to equality \eqref{wghtcoisom}. Then
\begin{align}
&  \beta_k^{-1}I_{\cY}- \Theta_{k}(z)\Theta_{k}(\zeta)^{*} = 
CR_{\bbeta,k}(zA)\left(\Gr^{(k)}_{\bbeta,C,A}\right)^{-1}R_{\bbeta,k}(\zeta 
A)^*C^*  \notag  \\
& \quad  - z \overline{\zeta} \cdot 
CR_{\bbeta,k+1}(zA)\left(\Gr^{(k+1)}_{\bbeta,C,A}\right)^{-1}R_{\bbeta,k+1}(\zeta 
A)^*C^*.
\label{frakkerid}
\end{align}
\end{lemma}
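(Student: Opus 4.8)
The plan is to mirror the computation carried out for the isometric identity \eqref{jul18} in the proof of part (5) of Lemma \ref{L:5.1}, but now working with the \emph{dual} (coisometric) relations and with the inverse gramians as weights. First I would read off from the block matrix equality \eqref{wghtcoisom}, by multiplying out its left-hand side, the operator relations
\begin{align*}
A\left(\Gr^{(k)}_{\bbeta,C,A}\right)^{-1}A^* + B_kB_k^* &= \left(\Gr^{(k+1)}_{\bbeta,C,A}\right)^{-1}, \\
A\left(\Gr^{(k)}_{\bbeta,C,A}\right)^{-1}C^* + B_kD_k^* &= 0, \\
C\left(\Gr^{(k)}_{\bbeta,C,A}\right)^{-1}C^* + D_kD_k^* &= \beta_k I_{\cY},
\end{align*}
together with the adjoint of the second relation. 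These are the coisometric analogues of the relations \eqref{jul13} and \eqref{jul14a} used in the isometric case.

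Next, using the realization formula \eqref{2.10} together with $\Theta_{k}(\zeta)^{*} = \beta_k^{-1}D_k^* + \overline{\zeta}\,B_k^*R_{\bbeta,k+1}(\zeta A)^*C^*$, I would expand the product $\Theta_{k}(z)\Theta_{k}(\zeta)^*$ into four terms and substitute the relations above for the quantities $D_kD_k^*$, $B_kD_k^*$, $D_kB_k^*$, and $B_kB_k^*$. The term $\beta_k^{-2}D_kD_k^*$ supplies the $\beta_k^{-1}I_{\cY}$ that cancels the left-hand side of \eqref{frakkerid}, leaving an expression in which every summand is sandwiched between $C$ and $C^*$ and carries exactly one of the weights $\left(\Gr^{(k)}_{\bbeta,C,A}\right)^{-1}$ or $\left(\Gr^{(k+1)}_{\bbeta,C,A}\right)^{-1}$.

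The crux is the reassembly. I would invoke the resolvent recursion $R_{\bbeta,k}(zA) = \beta_k^{-1}I_{\cX} + zAR_{\bbeta,k+1}(zA)$ (already used in the proof of (5)) and its adjoint counterpart in $\zeta$, and expand the target block $CR_{\bbeta,k}(zA)\left(\Gr^{(k)}_{\bbeta,C,A}\right)^{-1}R_{\bbeta,k}(\zeta A)^*C^*$ into the same four pieces. Matching these against the substituted terms, using that each $R_{\bbeta,j}(zA)$ is a power series in $A$ and hence commutes with $A$, shows that precisely the four pieces weighted by $\left(\Gr^{(k)}_{\bbeta,C,A}\right)^{-1}$ recombine into this block, while the single piece weighted by $\left(\Gr^{(k+1)}_{\bbeta,C,A}\right)^{-1}$ is exactly the $-z\overline{\zeta}$ term on the right of \eqref{frakkerid}.

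The main obstacle is purely bookkeeping: correctly tracking the powers of $\beta_k$ and the placement of the factors $A$ and $A^*$ relative to the resolvents when completing the ``square'', so that the commutation of $A$ with each $R_{\bbeta,j}(zA)$ is applied in the right spots. There is no conceptual difficulty beyond that already resolved in part (5) of Lemma \ref{L:5.1}; once the relations extracted from \eqref{wghtcoisom} are in hand, the identity \eqref{frakkerid} follows by direct term-by-term comparison.
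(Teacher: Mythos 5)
Your proposal is correct and follows essentially the same route as the paper's own proof: extracting the three block relations \eqref{relations1} from \eqref{wghtcoisom}, expanding $\Theta_k(z)\Theta_k(\zeta)^*$ via the realization \eqref{2.10}, substituting, and completing the square with the resolvent recursion $R_{\bbeta,k}(zA)=\beta_k^{-1}I+zAR_{\bbeta,k+1}(zA)$ together with the commutation of $A$ with $R_{\bbeta,k+1}(zA)$. The paper's proof is exactly this computation, presented as a parallel to the verification of \eqref{jul18}.
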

 
\begin{proof}The proof parallels the verification of the identity
    \eqref{jul18} done above.  The weighted-coisometry condition 
\eqref{wghtcoisom}
    gives us the set of equations
 \begin{align}
A \left(\Gr^{(k)}_{\bbeta,C,A}\right)^{-1} A^{*} + B_{k} B_{k}^{*} & = 
\left(\Gr^{(k+1)}_{\bbeta,C,A}\right)^{-1}, \notag\\
C\left(\Gr^{(k)}_{\bbeta,C,A}\right)^{-1} A^{*} + D_{k} B_{k}^{*} & = 0,  
\label{relations1} \\
C \left(\Gr^{(k)}_{\bbeta,C,A}\right)^{-1} C^{*} + D_{k} D_{k}^{*} &= \beta_{k} 
I_{\cY}.\notag
 \end{align}
We then  compute:
 \begin{align*}
    & \beta_k^{-1}I_{\cY} - \Theta_{k}(z)\Theta_{k}(\zeta)^{*} \\
     & =   \beta_k^{-1}I_{\cY} - \left[ \beta_k^{-1}D_{k} + z
     CR_{\bbeta,k+1}(zA) B_{k} \right] \left[ \beta_k^{-1}D_{k}^{*} +
     \bar{\zeta} B_{k}^{*} R_{\bbeta,k+1}(\zeta A)^*C^*\right] \\
     & = \beta_k^{-1}I_{\cY} - \beta_k^{-2}D_{k}D_{k}^{*} - z CR_{\bbeta,k+1}(zA)
     \beta_k^{-1}B_{k}D_{k}^{*} - \bar{\zeta}\beta_k^{-1}
     D_{k}B_{k}^{*} R_{\bbeta,k+1}(\zeta A)^*C^*  \\
     & \quad -  z \bar{\zeta} \cdot CR_{\bbeta,k+1}(zA)B_{k}B_{k}^{*}
     R_{\bbeta,k+1}(\zeta A)^*C^* \text{ (by \eqref{relations1})}      \\
     & =  \beta_k^{-2}C\left(\Gr^{(k)}_{\bbeta,C,A}\right)^{-1}C^{*} + 
z\beta_k^{-1} 
CR_{\bbeta,k+1}(zA) A \left(\Gr^{(k)}_{\bbeta,C,A}\right)^{-1}C^{*} \\
 & \quad + \bar{\zeta}\beta_k^{-1}C
\left(\Gr^{(k)}_{\bbeta,C,A}\right)^{-1}A^{*} R_{\bbeta,k+1}(\zeta A)^*C^* \\
     & \quad        - z \bar{\zeta} \cdot CR_{\bbeta,k+1}(zA)\left[ 
\left(\Gr^{(k+1)}_{\bbeta,C,A}\right)^{-1}
- A\left(\Gr^{(k)}_{\bbeta,C,A}\right)^{-1}A^{*}\right] R_{\bbeta,k+1}(\zeta 
A)^*C^*\\
&=C\left(\beta_k^{-1}I_{\cX}+zR_{\bbeta,k+1}(zA)A\right)\left(\Gr^{(k)}_{\bbeta,C,A}\right)^{-1}
\left(\beta_k^{-1}I_{\cX}+\overline{\zeta}A^*R_{\bbeta,k+1}(\zeta A)^*\right)C^* 
\\
&\quad- z \bar{\zeta} \cdot 
CR_{\bbeta,k+1}(zA)\left(\Gr^{(k+1)}_{\bbeta,C,A}\right)^{-1}
R_{\bbeta,k+1}(\zeta A)^*C^*\\
&=CR_{\bbeta,k}(zA)\left(\Gr^{(k)}_{\bbeta,C,A}\right)^{-1}
R_{\bbeta,k}(\zeta A)^*C^*\notag \\
&\quad - z \overline{\zeta} \cdot 
CR_{\bbeta,k+1}(zA)\left(\Gr^{(k+1)}_{\bbeta,C,A}\right)^{-1}
R_{\bbeta,k+1}(\zeta A)^*C^*.
     \end{align*}
\end{proof}

\begin{remark}   \label{R:defect}   
    {\em More generally, if $\Theta_{k}(z)$ is given by \eqref{2.10}
    and if we do not assume the weighted coisometry condition
    \eqref{wghtcoisom}, then the decomposition \eqref{frakkerid}
    holds in the more general form
    \begin{align*}
     &  \beta_k^{-1}I_{\cY} - \Theta_{k}(z)\Theta_{k}(\zeta)^{*} =
CR_{\bbeta,k}(zA)\left(\Gr^{(k)}_{\bbeta,C,A}\right)^{-1}R_{\bbeta,k}(\zeta 
A)^*C^* \\
     & \quad - z  \overline{\zeta} \cdot
CR_{\bbeta,k+1}(zA)\left(\Gr^{(k+1)}_{\bbeta,C,A}\right)^{-1}
R_{\bbeta,k+1}(\zeta A)^*C^*+ \Xi_{k}(z, \zeta)
    \end{align*}
    where the defect kernel $\Xi_{k}(z, \zeta)$ is given by
  \begin{align*}
       \Xi_{k}(z, \zeta) = &\begin{bmatrix} z CR_{n,k}(zA) &
 \beta_k^{-1}I_{\cY} \end{bmatrix}
\left( \begin{bmatrix} \left(\Gr^{(k+1)}_{\bbeta,C,A}\right)^{-1} & 0 \\ 0 &
      \beta_{k} I_{\cY} \end{bmatrix}\right. \\
      & \left.
    - \begin{bmatrix} A & B_{k} \\ C & D_{k} \end{bmatrix}
  \begin{bmatrix} \left(\Gr^{(k)}_{\bbeta,C,A}\right)^{-1}
& 0 \\ 0 &  I_{\cY} \end{bmatrix}
      \begin{bmatrix} A^{*} & C^{*} \\ B_{k}^{*} & D_{k}^{*}
      \end{bmatrix} \right) \cdot  \begin{bmatrix} \overline{\zeta}
   R_{\bbeta,k}(\zeta A)^*C^* \\ \beta_k^{-1}I_{\cY}
  \end{bmatrix}.
  \end{align*}
     }\end{remark}
     
Since equality \eqref{wghtcoisom} implies inequality \eqref{contr}, it follows 
that under 
assumption of Lemma \ref{L:frakcoisom}, all the conclusions of parts (1), (2) and 
(4) in Lemma 
\ref{L:5.1} are true. To have
all conclusions true, we need the operator \eqref{pr7} to be unitary.
\begin{lemma}   \label{L:5.6}
    Suppose that we are given an integer $k \ge 0$ and an exactly 
$\bbeta$-observable 
$\bbeta$-output-stable pair
    $(C,A)\in\cL(\cX,\cY)\times\cL(\cX)$. Then there exist operators  $B_k\in 
\cL(\cU_k, 
\cX)$
and $D_k\in \cL(\cU_k, \cY)$ such that equalities \eqref{wghtcoisom} and 
\eqref{isom} hold.
 Explicitly, such  $B_{k}$ and $C_{k}$ are essentially uniquely
 determined by solving the Cholesky factorization problem:
  \begin{equation}  \label{pr6}
\begin{bmatrix}B_k \\ D_k\end{bmatrix}\begin{bmatrix}B_k^* & D_k^*\end{bmatrix}=
\begin{bmatrix}\left(\Gr^{(k+1)}_{\bbeta,C,A}\right)^{-1}
 & 0 \\ 0 & \beta_{k} I_{\cY}\end{bmatrix}-
\begin{bmatrix}A \\ C\end{bmatrix}\left(\Gr^{(k)}_{\bbeta,C,A}\right)^{-1}
\begin{bmatrix}A^* & C^*\end{bmatrix}
\end{equation}
subject to the additional constraint that the coefficient space
$\cU_{k}$ be chosen so that $\left[ \begin{smallmatrix} B_{k} \\
D_{k} \end{smallmatrix} \right] \colon \cU_{k} \to \cX \oplus \cY$ is
injective.
\end{lemma}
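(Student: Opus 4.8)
The plan is to observe that the two identities \eqref{wghtcoisom} and \eqref{isom} assert exactly that the weighted colligation operator $\Xi$ of \eqref{pr7} is coisometric and isometric respectively, hence that together they amount to the single requirement that $\Xi$ be \emph{unitary}; I would therefore produce $B_k,D_k$ by extending a naturally arising isometry to a unitary. Since $(C,A)$ is exactly $\bbeta$-observable, Proposition \ref{P:exactbetaobs} guarantees that $\Gr^{(k)}_{\bbeta,C,A}$ and $\Gr^{(k+1)}_{\bbeta,C,A}$ are strictly positive definite, so the weighting operators, their inverses and their positive square roots appearing in \eqref{pr7} and below are all bounded and well defined.

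First I would isolate the first column of $\Xi$,
$$
V := \begin{bmatrix} \left(\Gr^{(k+1)}_{\bbeta,C,A}\right)^{1/2} A \left(\Gr^{(k)}_{\bbeta,C,A}\right)^{-1/2} \\ \beta_k^{-1/2} C \left(\Gr^{(k)}_{\bbeta,C,A}\right)^{-1/2} \end{bmatrix} \colon \cX \to \cX \oplus \cY,
$$
and check that $V^{*}V = I_{\cX}$: expanding $V^*V$ and invoking the weighted Stein identity \eqref{7.3} in the form $A^*\Gr^{(k+1)}_{\bbeta,C,A}A + \beta_k^{-1}C^*C = \Gr^{(k)}_{\bbeta,C,A}$ collapses it to $\left(\Gr^{(k)}_{\bbeta,C,A}\right)^{-1/2}\Gr^{(k)}_{\bbeta,C,A}\left(\Gr^{(k)}_{\bbeta,C,A}\right)^{-1/2} = I_{\cX}$. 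Thus $V$ is an isometry, $I - VV^{*}$ is the orthogonal projection onto the defect space $\cD := (\operatorname{Ran} V)^{\perp} \subseteq \cX \oplus \cY$, and in particular $I - VV^{*}\ge 0$. Writing $G := \left[ \begin{smallmatrix} \left(\Gr^{(k+1)}_{\bbeta,C,A}\right)^{1/2} & 0 \\ 0 & \beta_k^{-1/2}I_{\cY}\end{smallmatrix} \right]$, the right-hand side $P$ of the Cholesky problem \eqref{pr6} is precisely $P = G^{-1}(I - VV^{*})G^{-1} \ge 0$. I would then factor $P = \left[ \begin{smallmatrix} B_k \\ D_k \end{smallmatrix}\right]\left[ \begin{smallmatrix} B_k^{*} & D_k^{*}\end{smallmatrix}\right]$ with injective factor, taking $\cU_k := \overline{\operatorname{Ran} P}$ and $\left[ \begin{smallmatrix} B_k \\ D_k\end{smallmatrix}\right] := P^{1/2}|_{\cU_k}$. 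Setting $W := G\left[ \begin{smallmatrix} B_k \\ D_k\end{smallmatrix}\right]$ one gets $WW^{*} = GPG = I - VV^{*}$, a projection; since $W$ is injective and $WW^{*}$ is a projection, $W$ is a partial isometry with trivial kernel, hence an isometry, with range $\operatorname{Ran}(WW^{*}) = \cD$. Consequently $V^{*}W = 0$, so $\Xi = \left[ \begin{smallmatrix} V & W \end{smallmatrix}\right]$ satisfies $\Xi^{*}\Xi = I$ and $\Xi\Xi^{*} = VV^{*} + WW^{*} = I$, i.e. $\Xi$ is unitary; unwinding the weights, $\Xi\Xi^{*} = I$ is \eqref{wghtcoisom} and $\Xi^{*}\Xi = I$ is \eqref{isom}, giving both simultaneously.

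For the essential uniqueness I would appeal to the standard fact that the injective factor of a fixed positive operator is unique up to a unitary identification of the factoring space, so any other admissible pair $(B_k, D_k)$ arises from the one above by replacing $\cU_k$ with a unitarily equivalent copy. The one place where the hypotheses genuinely enter — and thus the only real obstacle — is the positivity $I - VV^{*}\ge 0$, which reduces to $V$ being an isometry; this in turn depends on the Stein identity \eqref{7.3} together with the strict positivity (and hence invertibility) of the gramians supplied by exact $\bbeta$-observability. Once this positivity is secured the isometry-to-unitary extension is routine and automatically delivers \emph{both} metric identities at once.
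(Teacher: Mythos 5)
Your proposal is correct and follows essentially the same route as the paper: both verify, via the weighted Stein identity \eqref{7.3} and the strict positivity of the gramians guaranteed by Proposition \ref{P:exactbetaobs}, that the first column of $\Xi$ in \eqref{pr7} is an isometry, and then obtain $B_k, D_k$ by extending it to a unitary, with injectivity of $\sbm{B_k \\ D_k}$ as the condition for unitarity rather than mere coisometry. The only difference is one of detail: the paper asserts the isometry-to-unitary extension in two sentences, while you carry it out explicitly by factoring the defect projection $I - VV^{*}$ through $P^{1/2}$, which also makes the essential-uniqueness claim transparent.
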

\begin{proof}
By Proposition \ref{P:3.0}, the weighted Stein
identity \eqref{7.3} holds for each $k \ge 1$.
Since $(C,A)$ is exactly observable, Proposition \ref{P:exactbetaobs} 
assures us that the gramian $\Gr^{(k)}_{\bbeta,C,A}$ is strictly
positive definite. It then follows from \eqref{7.3} that the operator
$$
\begin{bmatrix}\left(\Gr^{(k+1)}_{\bbeta,C,A}\right)^{\frac{1}{2}}A
\left(\Gr^{(k)}_{\bbeta,C,A}\right)^{-\frac{1}{2}}\\
\beta_{k}^{-\frac{1}{2}}C\left(\Gr^{(k)}_{\bbeta,C,A}\right)^{-\frac{1}{2}}
\end{bmatrix} \colon  \cX\to \cX\oplus \cY
$$
is an isometry.  By extending this operator to a coisometric  operator 
\eqref{pr7}
we arrive at $B_k$ and $D_k$ solving \eqref{pr6}.  Further, extension
of this operator to a unitary amounts to the additional restriction
that $\left[ \begin{smallmatrix} B_{k} \\ D_{k} \end{smallmatrix}
\right]$ be injective.
\end{proof}

\begin{remark}
    {\em For the classical Hardy-space setting the general 
    principle behind Lemma \ref{L:5.6} is as follows:  {\em given a 
    kernel on ${\mathbb D}$ with realization of the form $K(z, \zeta) 
    = C(I - zA)^{-1} (I - \overline{\zeta} A^{*})^{-1} C^{*}$ where 
    the pair $(C,A)$ is isometric in the sense that $A^{*}A + C^{*} C 
    = I$, one can produce a function $\Theta(z)$ with associated de 
    Branges-Rovnyak kernel $K_{\Theta}(z, \zeta)$ equal to $K$:
    $$
    \frac{ I - \Theta(z) \Theta(\zeta)^{*}}{1 -z \overline{\zeta}} = 
    C (I - zA)^{-1} (I - \overline{\zeta} A^{*})^{-1} C^{*}.
    $$
    Moreover, one can take $\Theta(z)$ to have the form $\Theta(z) = 
    D + z C (I - zA)^{-1} B$ where $\left[ \begin{smallmatrix} B \\ C 
    \end{smallmatrix} \right]$ is constructed as an injective  
    solution of the Cholesky factorization problem}
    $$
    \begin{bmatrix} B \\ D \end{bmatrix} \begin{bmatrix} B^{*} & 
	D^{*} \end{bmatrix} = \begin{bmatrix} I & 0 \\ 0 & I 
    \end{bmatrix} - \begin{bmatrix} A \\ C \end{bmatrix}
    \begin{bmatrix} A^{*} & C^{*} \end{bmatrix}.
    $$
    This principle appears explicitly in \cite[Section 3.1]{BF-Helton} 
    for the indefinite metric setting and in \cite[Theorem 
    1.3]{BB-IEOT2008} for the Drury-Arveson-space multivariable 
    setting.
    }\end{remark}

The results of this section suggest that the following definition 
will be useful.

\begin{definition}  \label{D:conserv-col}  Suppose that
    $$
    U_{k} = \begin{bmatrix} A & B_{k} \\ C & D_{k} \end{bmatrix} 
    \colon \begin{bmatrix} \cX \\ \cU_{k} \end{bmatrix} \to 
    \begin{bmatrix} \cX \\ \cY \end{bmatrix}
   $$
 is a colligation family with $A$ $\bbeta$-hypercontractive and 
 $(C,A)$ exactly $\bbeta$-observable.  We  then say that 
 $\{U_{k}\}_{k \ge 0}$ is
 \begin{enumerate}
     \item a {\em  $\bbeta$-isometric colligation family} if $U_{k}$ 
     satisfies \eqref{isom} for each $k$,
     \item a {\em $\bbeta$-coisometric colligation family} if 
     $U_{k}$ satisfies  \eqref{wghtcoisom} for each $k$, and
     \item a {\em $\bbeta$-unitary colligation family} if $U_{k}$ 
     satisfies both \eqref{isom} and \eqref{wghtcoisom} for each $k$.
  \end{enumerate}
  \end{definition}
  
 The following corollary is an immediate consequence of Lemma \ref{L:5.1} 
 parts (1) and (3) together with  formula \eqref{ZtransIO}. In its formulation 
we use the notation $H^{2}(\{\cU_{k}\}_{k \ge 0})$ for the ``time-varying Hardy
         space'' $\bigoplus_{k=0}^{\infty} z^{k} \cU_{k}$ and we let 
$\ell^{2}_{\bbeta}(\cY)$ to denote
     the space of $\cY$-valued sequences $\{ y(k) \}_{k \ge 0}$ with
     norm given by $\|\{ y(k) \}_{k \ge 0}\|^{2} =
     \sum_{k=0}^{\infty} \bbeta_{k} \| y\|^{2}_{\cY}$.
 
 \begin{corollary} \label{C:bbetaisom}
     Suppose that  $\left\{ U_{k} = \left[\begin{smallmatrix} A & B_{k} \\ C & 
     D_{k} \end{smallmatrix} \right] \right\}$ is a 
     $\bbeta$-isometric family and let $\Sigma_{\bbeta}$ be the 
     associated time-varying linear system as in \eqref{2.1}, and let 
     $\{\Theta_{k}\}_{k \ge 0}$ be the associated transfer-function 
     family as in \eqref{2.10}.  Then:
     \begin{enumerate}
	 \item The operator
	 $$
	 M_{\Theta} = \begin{bmatrix} M_{\Theta_{0}} & M_{\Theta_{1}} 
	 & M_{\Theta_{2}} & \cdot \end{bmatrix} \colon 
	 \bigoplus_{k=0}^{\infty} z^{k}u(k) \mapsto 
	 \sum_{k=0}^{\infty} \Theta_{k}(z) z^{k} u(k)
	 $$
	 is an isometry from $H^{2}(\{ \cU_{k}\}_{k \ge 0})$ into 
	 $H^{2}_{\bbeta}(\cY)$. 
	 \item The input-output map $T_{\Sigma_{\bbeta}}$ \eqref{IOmap} acts as 
     an isometry from $\bigoplus_{k=0}^{\infty} \cU_{k}$ into 
     $\ell^{2}_{\bbeta}(\cY)$.  
     \end{enumerate}
  \end{corollary}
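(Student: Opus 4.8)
The plan is to reduce everything to statement (1), and to prove (1) by showing that the image vectors $S_{\bbeta}^{k}\Theta_{k}u(k)$ form a mutually orthogonal, normalized family in $H^{2}_{\bbeta}(\cY)$. First I would record the elementary reformulation: since multiplication by $z^{k}$ is exactly $S_{\bbeta}^{k}$, the operator $M_{\Theta}$ sends $\bigoplus_{k}z^{k}u(k)$ to $\sum_{k\ge 0}S_{\bbeta}^{k}\Theta_{k}u(k)$, while the norm on $H^{2}(\{\cU_{k}\}_{k\ge 0})=\bigoplus_{k}z^{k}\cU_{k}$ is $\sum_{k}\|u(k)\|_{\cU_{k}}^{2}$. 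Thus it suffices to show $\langle S_{\bbeta}^{k}\Theta_{k}u(k),\,S_{\bbeta}^{k'}\Theta_{k'}u(k')\rangle_{H^{2}_{\bbeta}(\cY)}=\delta_{k,k'}\|u(k)\|^{2}_{\cU_{k}}$. I would first prove this for finitely supported inputs and then pass to the general case: once orthogonality and normalization are in hand, the partial sums of $\sum_{k}S_{\bbeta}^{k}\Theta_{k}u(k)$ are Cauchy by the Pythagorean theorem whenever $\sum_{k}\|u(k)\|^{2}<\infty$, so the series converges in $H^{2}_{\bbeta}(\cY)$ and the norm identity extends to all of $H^{2}(\{\cU_{k}\}_{k\ge 0})$ by continuity.

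For the diagonal terms I would invoke the hypothesis directly. Since $\{U_{k}\}_{k\ge 0}$ is a $\bbeta$-isometric family, each $U_{k}$ satisfies \eqref{isom}, which is equivalent to the pair of relations \eqref{jul13} and \eqref{jul14a}. In particular \eqref{jul14a} holds, so part (3) of Lemma \ref{L:5.1} applies and shows that $S_{\bbeta}^{k}M_{\Theta_{k}}$ is isometric from $\cU_{k}$ into $H^{2}_{\bbeta}(\cY)$; hence $\|S_{\bbeta}^{k}\Theta_{k}u(k)\|^{2}_{H^{2}_{\bbeta}(\cY)}=\|u(k)\|^{2}_{\cU_{k}}$, which is the $k=k'$ case.

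The heart of the matter is the off-diagonal orthogonality for $k<k'$, and this is the step I expect to be the real obstacle, since Lemma \ref{L:5.1}(1b) records orthogonality only among shifts of a \emph{single} $\Theta_{k}$, whereas here two different transfer functions $\Theta_{k}$ and $\Theta_{k'}$ are involved. I would carry out the computation exactly in the style of the proof of Lemma \ref{L:5.1}(1b): expand $S_{\bbeta}^{k}\Theta_{k}u(k)$ and $S_{\bbeta}^{k'}\Theta_{k'}u(k')$ through the power-series representation \eqref{jul15}, pair them off coefficientwise using the $H^{2}_{\bbeta}(\cY)$ inner product, and collapse the resulting $A^{*j}C^{*}CA^{j}$-series via \eqref{defR} so as to recognize the shifted gramian $\Gr^{(k'+1)}_{\bbeta,C,A}$. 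The overlapping powers $z^{n}$ (with $n\ge k'$) then contribute, after moving the adjoints across, a single term of the form $\langle u(k'),\,\bigl(\beta_{k'}^{-1}C^{*}D_{k'}+A^{*}\Gr^{(k'+1)}_{\bbeta,C,A}B_{k'}\bigr)^{*}A^{k'-k-1}B_{k}\,u(k)\rangle$, which vanishes because the operator factor $\beta_{k'}^{-1}C^{*}D_{k'}+A^{*}\Gr^{(k'+1)}_{\bbeta,C,A}B_{k'}$ is zero by relation \eqref{jul13} taken at the index $k'$. (It is \eqref{jul13} at the larger index that is used, mirroring how the proof of (1b) uses \eqref{jul13} at the index of the more heavily shifted factor.) Combining the diagonal and off-diagonal computations with the limiting argument of the first paragraph proves (1).

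Finally, statement (2) follows from (1) together with the $Z$-transform dictionary \eqref{ZtransIO}. The map $\{y(k)\}_{k\ge 0}\mapsto \sum_{k}y(k)z^{k}$ is an isometry of $\ell^{2}_{\bbeta}(\cY)$ onto $H^{2}_{\bbeta}(\cY)$ (both norms equal $\sum_{k}\beta_{k}\|y(k)\|^{2}_{\cY}$), and $\{u(k)\}_{k\ge 0}\mapsto \bigoplus_{k}z^{k}u(k)$ is an isometry of $\bigoplus_{k}\cU_{k}$ onto $H^{2}(\{\cU_{k}\}_{k\ge 0})$. Under these identifications \eqref{ZtransIO} says precisely that $\bT_{\bbeta}$ is carried to $M_{\Theta}$, so the isometry of $M_{\Theta}$ established in (1) transfers to $\bT_{\bbeta}$, giving (2).
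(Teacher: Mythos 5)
Your proof is correct, and it follows the same route the paper intends: the paper disposes of Corollary \ref{C:bbetaisom} in a single sentence, declaring it an immediate consequence of Lemma \ref{L:5.1} parts (1) and (3) together with \eqref{ZtransIO}. The one place you go beyond the paper is exactly the place where the paper's one-line proof is too quick: as you observed, Lemma \ref{L:5.1}(1b) as literally stated gives orthogonality only between $S_{\bbeta}^{k}\Theta_{k}u$ and $S_{\bbeta}^{m}\Theta_{k}u'$ for shifts of one and the same $\Theta_{k}$, whereas the isometry of $M_{\Theta}$ needs the cross-index orthogonality of $S_{\bbeta}^{k}\Theta_{k}u(k)$ and $S_{\bbeta}^{k'}\Theta_{k'}u(k')$ for $k\ne k'$. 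Your computation supplying this is the right one: expanding both factors via \eqref{jul15}, pairing coefficients of equal powers (only powers $n\ge k'$ overlap), and collapsing the series via \eqref{defR} reduces the inner product to $\bigl\langle \bigl(\beta_{k'}^{-1}C^{*}D_{k'}+A^{*}\Gr^{(k'+1)}_{\bbeta,C,A}B_{k'}\bigr)u(k'),\, A^{k'-k-1}B_{k}u(k)\bigr\rangle_{\cX}$, which vanishes by \eqref{jul13} at the larger index $k'$ --- precisely mirroring the proof of (1b), where the collapsed operator is built from the data of the more heavily shifted factor. The diagonal normalization via \eqref{jul14a} and Lemma \ref{L:5.1}(3), the Pythagorean/Cauchy limiting argument, and the transfer of the isometry from $M_{\Theta}$ to $\bT_{\bbeta}$ through the two isometric $Z$-transform identifications all match the paper's intent, so your write-up is, if anything, a more complete version of the paper's argument rather than a different one.
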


\begin{remark}  \label{R:unitarycol}
   {\em  To handle the case where $(C,A)$ is not necessarily exactly 
    $\bbeta$-observable, one can proceed as follows.  We let 
    $\cX_{k}$ be the space $\cX$ but with a new inner product
    $$
    \langle x, y \rangle_{\cX_{k}} = \langle \Gr^{(k)}_{\bbeta, C,A} 
    x, y \rangle.
    $$
    If $\Gr^{(k)}_{\bbeta, C,A}$ is not injective, we identify elements of 
    self inner-product equal to $0$ with the zero element of the space.
   We then complete $\cX_{k}$ if necessary to arrive at a Hilbert 
   space, still denoted as $\cX_{k}$.  Similarly, we let $\cY_{k}$ be 
   the space $\cY$ but with new inner product
   $$
   \langle y, y' \rangle_{\cY_{k}} = \beta_{k}^{-1} \cdot \langle y, y' 
   \rangle_{\cY}.
   $$
   We let $A_{k}$ denote the operator $A$, but viewed as an operator 
   from $\cX_{k}$ to $\cX_{k+1}$.  Similarly we let $C_{k}$ denote 
   the operator $C$ but viewed as an operator from $\cX_{k}$ into 
   $\cY_{k}$.  Then the identity \eqref{7.3} tells us that the 
   operator
   $$ \begin{bmatrix} A_{k} \\ C_{k} \end{bmatrix} \colon  \cX_{k} \to 
   \begin{bmatrix} \cX_{k+1} \\ \cY_{k} \end{bmatrix},
   $$
   defined initially only on the image of $\cX \oplus \cY$ in $\cX_{k} 
   \oplus \cY_{k}$, extends uniquely to a well-defined isometry.  
   Thus $\cN_{k}: = \operatorname{Ran} \left[ \begin{smallmatrix} A_{k} \\ C_{k} 
   \end{smallmatrix} \right]$ is a closed subspace of $\cX_{k} \oplus 
   \cY_{k}$.  We choose as coefficient space $\cU_{k}$ a copy of the 
   orthogonal complement 
   $$
     \cU_{k} = \begin{bmatrix} \cX_{k} \\ \cY_{k} \end{bmatrix} 
     \ominus \cN_{k}
   $$
   and let $\left[ \begin{smallmatrix} B_{k} \\ D_{k} 
\end{smallmatrix} \right] \colon \cU_{k} \to \left[ \begin{smallmatrix} 
\cX_{k} \\ \cY_{k} \end{smallmatrix}  \right]\ominus \cN_{k}
$ be any convenient unitary identification map.  In this way we 
arrive at a unitary colligation matrix
$$
  \bU_{k} = \begin{bmatrix} A_{k} & B_{k} \\ C_{k} & D_{k} 
\end{bmatrix} \colon \begin{bmatrix} \cX_{k} \\ \cU_{k} \end{bmatrix} 
\to \begin{bmatrix} \cX_{k+1} \\ \cY_{k} \end{bmatrix}.
$$
In case $B_{k}$ has range inside the image of $\cX$ in $\cX_{k}$, one 
can interpret the unitary property of $\bU_{k}$ in terms of the 
original $\cX$-inner product to 
arrive back at the relations \eqref{isom}.  Even in the general 
case, the relations \eqref{isom} still hold with proper 
interpretation.  While this procedure is more general than that taken 
in Lemma \ref{L:5.6}, the construction via this procedure is less 
explicit.
}\end{remark}

\section{Beurling-Lax theorem for $H^2_\bbeta(\cY)$}

The classical Beurling-Lax theorem states that every shift-invariant closed 
subspace $\cM$ of $H^2(\cY)$ can be represented in the form $\cM=\Theta\cdot 
H^2(\cU)$ for an auxiliary coefficient Hilbert space $\cU$ and an 
inner $\cL(\cU,\cY)$-valued 
function $\Theta$. In this section we present 
three analogues of the Beurling-Lax representation theorem for the 
weighted Hardy-space setting.

\subsection{Shift-invariant subspaces
contractively included in $H^2_\bbeta(\cY)$}  \label{S:c-i-sub}

Let us say that the Hilbert space  $\cM$ is contractively included in
the Hilbert space $\cH$ if $\cM \subset \cH$ as sets and moreover
$ \|m\|_{\cM} \ge \| m\|_{\cH}$ for all $m \in \cM$.
We also say that an $\cL(\cU,\cY)$-valued function $\Theta$ is a {\em contractive 
multiplier}
if the operator $M_{\Theta}: \, f(z) \mapsto \Theta(z) \cdot f(z)$ of 
multiplication
by $\Theta$ defines a contractive operator from $H^2_\bbeta(\cU)$ to 
$H^2_\bbeta(\cY)$.

\begin{theorem} \label{T:4.1} 
A Hilbert space $\cM$ is such that
\begin{enumerate}
\item $\cM$ is contractively included in $H^2_\bbeta(\cY)$,
\item $\cM$ is $S_\bbeta$-invariant: $S_\bbeta\cM\subset\cM$,
\item the operator $A=(S_\bbeta\vert_{\cM})^*$ is a $\bbeta$-strongly stable 
$\bbeta$-hypercontraction,
\end{enumerate}
if and only if there is a coefficient Hilbert space $\cU$ and a
contractive multiplier $\Theta$ so that
\begin{equation}  \label{4.0} 
\cM = \Theta\cdot H^2_\bbeta({\cU})
\end{equation}
with lifted norm
\begin{equation}  \label{4.1}
\| \Theta\cdot f \|_{\cM} = \| Q f \|_{H^2_\bbeta({\cU})}
\end{equation}
where $Q$ is the orthogonal projection onto $(\operatorname{Ker} \, 
M_{\Theta})^\perp$.  In this case $\cM$ is itself a reproducing 
kernel Hilbert space with reproducing kernel given by
\begin{equation}   \label{kerM}
    K_{\cM}(z, \zeta) = \Theta(z) (K_{\bbeta}(z, \zeta) I_{\cU}) 
    \Theta(\zeta)^{*}, \quad (z, \zeta) \in {\mathbb D}^{2}.
\end{equation}
        \end{theorem}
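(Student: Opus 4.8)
The plan is to prove both implications, with the converse (a contractive multiplier produces the three properties) being the routine direction and the forward direction (the three properties produce a multiplier) resting on the operator model Theorem \ref{T:beta-stablemodel}. For the forward direction I would first set $A=(S_\bbeta|_\cM)^{*}$ (adjoint taken in the $\cM$-inner product), which by hypothesis (3) is a $\bbeta$-strongly stable $\bbeta$-hypercontraction. Applying Theorem \ref{T:beta-stablemodel} yields a coefficient space $\cU$ with $\dim\cU=\operatorname{rank}\Gamma_{\bbeta,A}[I_\cM]$, an $S_\bbeta^{*}$-invariant subspace $\cN\subseteq H^{2}_\bbeta(\cU)$, and a unitary $V\colon\cM\to\cN$ with $VAV^{*}=S_\bbeta^{*}|_\cN$. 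Writing $\iota\colon\cM\hookrightarrow H^{2}_\bbeta(\cY)$ for the contractive inclusion furnished by (1), and $P_\cN$ for the orthogonal projection onto $\cN$, the key observation is that $\cM$ carries two a priori unrelated shift structures: the forward structure $\iota A^{*}=S_\bbeta\iota$ coming from (2) (since $A^{*}=S_\bbeta|_\cM$), and the backward structure $VAV^{*}=S_\bbeta^{*}|_\cN$ coming from the model.

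The device that reconciles them is the contraction $X=\iota V^{*}P_\cN\colon H^{2}_\bbeta(\cU)\to H^{2}_\bbeta(\cY)$. Using $VA^{*}V^{*}=(S_\bbeta^{*}|_\cN)^{*}=P_\cN S_\bbeta|_\cN$ (the adjoint of the restriction of $S_\bbeta^{*}$ to the invariant subspace $\cN$) together with $P_\cN S_\bbeta=(P_\cN S_\bbeta|_\cN)P_\cN$ (valid because $\cN^{\perp}$ is $S_\bbeta$-invariant), one computes $X S_\bbeta=\iota V^{*}(P_\cN S_\bbeta|_\cN)P_\cN=\iota A^{*}V^{*}P_\cN=S_\bbeta\iota V^{*}P_\cN=S_\bbeta X$. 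Since any bounded operator intertwining the two shifts $S_\bbeta$ is multiplication by an analytic $\cL(\cU,\cY)$-valued function, $X=M_\Theta$ for a contractive multiplier $\Theta$. As $\iota$ and $V^{*}$ are injective, $\ker M_\Theta=\cN^{\perp}$, hence $Q=P_\cN$ and $\operatorname{Ran}M_\Theta=\iota\cM=\cM$; for $m=M_\Theta f$ the chain $\|m\|_\cM=\|V^{*}P_\cN f\|_\cM=\|P_\cN f\|=\|Qf\|$ delivers exactly the lifted norm \eqref{4.1}, establishing \eqref{4.0}.

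For the converse I would start from a contractive multiplier $\Theta$ and take $\cM=\Theta\cdot H^{2}_\bbeta(\cU)$ with norm \eqref{4.1}: this is the standard operator-range space, contractively included in $H^{2}_\bbeta(\cY)$ (giving (1)), and $S_\bbeta$-invariant because $M_\Theta S_\bbeta=S_\bbeta M_\Theta$ (giving (2)). For (3) I note $\ker M_\Theta$ is $S_\bbeta$-invariant, so $\cN:=(\ker M_\Theta)^{\perp}$ is $S_\bbeta^{*}$-invariant and $M_\Theta|_\cN$ is a unitary onto $\cM$ carrying $A=(S_\bbeta|_\cM)^{*}$ to $S_\bbeta^{*}|_\cN$. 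To check that $S_\bbeta^{*}|_\cN$ is a $\bbeta$-strongly stable $\bbeta$-hypercontraction, observe that the pair $(E|_\cN,S_\bbeta^{*}|_\cN)$ with $Ef=f(0)$ restricts the identity observability operator of Lemma \ref{L:3.1}(2), so its gramian is $I_\cN$; Proposition \ref{P:2.2} then gives $\Gamma_{\bbeta,S_\bbeta^{*}|_\cN}[I]=(E|_\cN)^{*}(E|_\cN)\ge0$ and $\Gamma^{(k)}_{\bbeta,S_\bbeta^{*}|_\cN}[I]=\Gr^{(k)}\ge0$, while the final computation in the proof of Lemma \ref{L:3.1} shows $\langle\Gr^{(k)}S_\bbeta^{*k}x,\,S_\bbeta^{*k}x\rangle=\sum_{j\ge k}\beta_j\|x_j\|^{2}\to0$, which is $\bbeta$-strong stability.

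Finally, the reproducing kernel \eqref{kerM} comes from the standard multiplier-range computation: from $M_\Theta^{*}(K_\bbeta(\cdot,\zeta)y)=K_\bbeta(\cdot,\zeta)\Theta(\zeta)^{*}y$ and $\operatorname{Ran}M_\Theta^{*}\subseteq(\ker M_\Theta)^{\perp}$ one gets that $K_\cM(\cdot,\zeta)y=M_\Theta M_\Theta^{*}(K_\bbeta(\cdot,\zeta)y)=\Theta(\cdot)K_\bbeta(\cdot,\zeta)\Theta(\zeta)^{*}y$ represents evaluation in the lifted norm. I expect the main obstacle to be the forward direction, specifically the verification that $X=\iota V^{*}P_\cN$ genuinely intertwines the shifts and hence is a multiplier: this is precisely the point where the forward shift structure supplied by the contractive inclusion must be matched against the backward shift structure supplied by the model unitary, and it is what forces the use of Theorem \ref{T:beta-stablemodel} rather than a direct wandering-subspace argument.
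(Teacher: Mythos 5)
Your proposal is correct and follows essentially the same route as the paper: your intertwining operator $X=\iota V^{*}P_{\cN}$ is precisely the paper's $R=\iota\circ\cO_{\bbeta,C,A}^{*}$, because the model unitary $V$ furnished by Theorem \ref{T:beta-stablemodel} is nothing but the observability operator $\cO_{\bbeta,C,A}$ (with $C^{*}C=\Gamma_{\bbeta,A}[I_{\cM}]$) viewed as a unitary onto its range $\cN$ — the paper simply inlines that construction instead of citing the model theorem. Likewise your converse, transporting $A$ to $S_{\bbeta}^{*}|_{\cN}$ via the unitary $M_{\Theta}|_{\cN}$ and then checking that restrictions of $S_{\bbeta}^{*}$ to invariant subspaces inherit $\bbeta$-hypercontractivity and $\bbeta$-strong stability from Lemma \ref{L:3.1} and Proposition \ref{P:2.2}, is the paper's computation $A^{j}\colon\Theta f\mapsto\Theta S_{\bbeta}^{*j}Qf$ with the same quadratic-form transfer, merely repackaged as a unitary equivalence.
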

  
\begin{proof}  We first verify sufficiency. Suppose that $\cM$ has the form 
\eqref{4.0} for a
contractive multiplier $\Theta$ with $\cM$-norm given by \eqref{4.1}.  Since
$\|M_{\Theta}\| \le 1$, it follows that
$$
\|\Theta f\|_{H^2_\bbeta(\cY)} =  \|M_\Theta Qf\|_{H^2_\bbeta(\cY)}\le 
\|Qf\|_{H^2_\bbeta(\cU)}=\|\Theta f\|_{\cM}
$$
i.e., (1) holds. Property (2) follows from the intertwining equality $S_\bbeta 
M_{\Theta} =
M_{\Theta}S_\bbeta$. The latter  intertwining equality also implies $M_\Theta
S_\bbeta\vert_{\operatorname{Ker}M_\Theta}=0$ which can be written equivalently 
in terms of 
the orthogonal projection $Q$ onto $(\operatorname{Ker}
M_{\Theta})^{\perp}\subset H^2_\bbeta({\cU})$ as $QS_\bbeta(I-Q)=0$. Thus, we 
have
\begin{equation}  \label{4.2}
Q S_\bbeta = Q S_\bbeta Q\quad \text{and}\quad S_\bbeta^{*} Q= QS_\bbeta^{*}Q.
\end{equation}
Furthermore, for every $f, \, g\in H^2_\bbeta(\cU)$, we have
\begin{align*}
\langle \Theta g, \, A\Theta f\rangle_{\cM}=&\langle S_\bbeta\Theta g, \, \Theta 
f\rangle_{\cM}=\langle \Theta S_\bbeta g, \, \Theta f\rangle_{\cM}=\langle Q 
S_\bbeta g, \, 
f\rangle_{H^2_\bbeta(\cU)}\\
=&\langle Q S_\bbeta Qg, \, f\rangle_{H^2_\bbeta(\cU)}=\langle Q g, \, 
S_\bbeta^*Qf\rangle_{H^2_\bbeta(\cU)}=
\langle \Theta g, \, \Theta S_\bbeta^* Qf\rangle_{\cM},
\end{align*}
which implies that $A: \, \Theta f\to \Theta S_\bbeta^* Qf$. Iterating the latter 
formula 
gives
\begin{equation}
A^j: \, \Theta f\to \Theta S_\bbeta^{*j} Qf\quad\mbox{for}\quad j\ge 0.
\label{4.4}
\end{equation}

We have  from \eqref{4.2}, \eqref{4.4} and \eqref{4.1},
\begin{align*}
\left\langle (\Gamma^{(k)}_{\bbeta,A}[I_{\cM}])\Theta f, \, \Theta
f\right\rangle_{\cM}
& =  \sum_{j=0}^\infty \left( - \sum_{\ell=1}^{k}
\frac{c_{j+\ell}}{\beta_{k-\ell}} \right)  \|A^j\Theta f\|^2_{\cM}\\
&= \sum_{j=0}^\infty\left( - \sum_{\ell=1}^{k} \frac{ c_{j+\ell}}{
\beta_{k-\ell}} \right) \|S_\bbeta^{*j}Qf\|^2_{H^2_\bbeta(\cU)}\\
&=\left\langle \left(\Gamma^{(k)}_{\bbeta,S_\bbeta^*}[I_{H^2_\bbeta(\cU)}]\right)
Q f, \, Q f\right\rangle_{H^2_\bbeta(\cU)}
\end{align*}
for $k=0,1,2,\dots$, and also 
\begin{align*}
\langle A_j^{*k}\Gamma^{(k)}_{\bbeta,A}[I_{\cM}]A^k\Theta f, \, \Theta f\rangle_{\cM}
&=\langle\Gamma^{(k)}_{\bbeta,A}[I_{\cM}]A^k\Theta f, \, A^k\Theta f\rangle_{\cM}\\
&=\langle\Gamma^{(k)}_{\bbeta,A}[I_{\cM}]\Theta S_\bbeta^{*k}Qf, \, \Theta S_\bbeta^{*k}
Qf\rangle_{\cM}\\
&=\left\langle
\left(\Gamma^{(k)}_{\bbeta,S_\bbeta^*}[I_{H^2_\bbeta(\cU)}]\right)QS_\bbeta^{*k}Qf, \,
QS_\bbeta^{*k}Qf\right\rangle_{H^2_\bbeta(\cY)}\\
&=\left\langle
\left(\Gamma^{(k)}_{\bbeta,S_\bbeta^*}[I_{H^2_\bbeta(\cU)}]\right)S_\bbeta^{*k}Qf, \,
S_\bbeta^{*k}Qf\right\rangle_{H^2_\bbeta(\cY)}.
\end{align*}
Since $S_\bbeta^*$ is a $\bbeta$-strongly stable $\bbeta$-hypercontraction on $H^2_\bbeta(\cY)$,
we conclude from the latter computations that 
$$
\left\langle (\Gamma^{(k)}_{\bbeta,A}[I_{\cM}])\Theta f, \, \Theta
f\right\rangle_{\cM}=\left\langle \left(\Gamma^{(k)}_{\bbeta,S_\bbeta^*}[I_{H^2_\bbeta(\cU)}]\right)
Q f, \, Q f\right\rangle_{H^2_\bbeta(\cU)}\ge 0 
$$
for $k\ge 0$, and that
$$
\lim_{k\to \infty}\langle A_j^{*k}\Gamma^{(k)}_{\bbeta,A}[I_{\cM}]A^k\Theta f, \, \Theta f\rangle_{\cM}
=0.
$$
We conclude that $A$ is a $\bbeta$-strongly stable $\bbeta$-hypercontraction 
on $\cM$ and thereby complete the proof of sufficiency. 

\smallskip

Suppose now that the Hilbert space $\cM$ satisfies conditions (1), (2), (3) in 
the
statement of the theorem. Using hypothesis (2) we can define the operator 
$A:=(S_\bbeta\vert_{\cM})^*$
on $\cM$ and since it is $\bbeta$-hypercontractive by hypothesis (3), the 
operator 
$\Gamma_{\bbeta,A}[I_{\cM}]$ is positive semidefinite. Choose the coefficient 
Hilbert space 
$\cU$ so that
$$
\operatorname{dim} \cU = \operatorname{rank} \Gamma_{\bbeta,A}[I_{\cM}]
$$
and then choose the operator $C \colon \cM \to \cU$ so that
$C^{*}C =\Gamma_{\bbeta,A}[I_{\cM}]$. Then $(C, A)$ is a $\bbeta$-isometric pair 
and, since $A$ 
is $\bbeta$-strongly stable by hypothesis (3),
it follows that the observability operator
$\cO_{\bbeta,C, A} \colon f \mapsto CR_{\bbeta}(zA) f$
is an isometry from $\cM$ into $H^2_\bbeta(\cY)$. By part (1) of Theorem 
\ref{T:1.2}, we have
the intertwining equality \eqref{3.3}. Taking adjoints in \eqref{3.3} then gives
$$
\cO_{\bbeta, C,A}^*S_\bbeta=A^*\cO_{\bbeta, C,A}^*.
$$
The inclusion map $\iota \colon \cM \to H^2_\bbeta(\cY)$ is a contraction by 
hypothesis (1).
Moreover, $\iota \circ A^*=S_\bbeta \circ\iota: \, \cM\to  H^2_\bbeta(\cY)$. 
Therefore the 
operator
$$
R = \iota \circ \cO_{\bbeta,C, A}^{*} \colon  H^2_\bbeta(\cU)\to  H^2_\bbeta(\cY)
$$
is a contraction and
$$
RS_\bbeta=\iota \circ \cO_{\bbeta,C, A}^{*}S_\bbeta=\iota \circ A^*\cO_{\bbeta,C, A}^{*}=
S_\bbeta \circ\iota \circ\cO_{\bbeta,C, A}^{*}=S_\bbeta R.
$$
Therefore (see \cite{olieot}) $R$ is a multiplication operator, i.e., there is a 
contractive
multiplier $\Theta$ so that $R = M_{\Theta}$. Since
$\cO_{\bbeta,C, A} \colon \cM \to H^2_\bbeta(\cY)$ is an isometry, it 
follows that
$\operatorname{Ran}\cO_{\bbeta,C,A}^{*} = \cM$ and also that $\cM = \Theta \cdot
H^2_\bbeta(\cU)$ with $\cM$-norm given by \eqref{4.1}.  

\smallskip

Finally, if $\cM$ is given by \eqref{4.0} and \eqref{4.1} and if $f = 
\Theta \cdot g$ (with $g$ assumed to be in 
$(\operatorname{Ker} M_{\Theta})^{\perp}$) is a generic element of 
$\cM$, then we see from the lifted-norm property transferred to inner 
products that
\begin{align*}
    \langle f, \Theta k_{\bbeta}(\cdot, \zeta)  \Theta(\zeta)^{*} y 
    \langle_{\cM} & = \langle \Theta \cdot g, \Theta \cdot 
    k_{\bbeta}(\zeta, \zeta) u \rangle_{\cM} \\
    & = \langle g, k_{\bbeta}(\cdot, \zeta) \Theta(\zeta)^{*} y 
    \rangle_{H^{2}(\cU)} \\
    & = g(\zeta), \Theta(\zeta)^{*} y \rangle_{\cU}\\
    & = \langle 
    \Theta(\zeta) g(\zeta), y \rangle_{\cY} = \langle f(\zeta), y 
    \rangle_{\cY}
\end{align*}
and it follows that $\cM = \cH(K_\cM)$ with $K_\cM$ as in \eqref{kerM} as 
asserted.
\end{proof}

\begin{remark} \label{R:multipliers} {\em
    Theorem \ref{T:4.1} suggests the question as to how to 
    characterize the contractive multipliers from 
    $H^{2}_{\bbeta}(\cU)$ into $H^{2}_{\bbeta}(\cY)$ in general.  If 
the multiplication operator     $M_{\Theta} \colon f \mapsto \Theta f$ is contractive from 
    $H^{2}_{\bbeta}(\cU)$ into $H^{2}_{\bbeta}(\cY)$, a standard 
    reproducing-kernel-space computation shows that 
    $$ M_{\Theta}^{*} \colon K_{\bbeta}(\cdot, \zeta) y \mapsto 
    K_{\bbeta}(\cdot, \zeta) \Theta(\zeta)^{*} y
    $$
    for each $\zeta \in {\mathbb D}$ and $y \in \cY$ from which it 
    follows that the associated kernel
    \begin{equation}   \label{deBRker}
    L(z, \zeta): = (I - \Theta(z) \Theta(\zeta)^{*}) K_{\bbeta}(z, \zeta)
    \end{equation}
    is a positive kernel on ${\mathbb D} \times {\mathbb D}$.
    By looking at the diagonal entries of this 
    kernel we see that $\Theta(z) \Theta(z)^{*} \le I$ for all $z \in {\mathbb 
    D}$.  Furthermore, by noting the action of $M_{\Theta}$ on constant 
    vectors $u \in H^{2}_{\bbeta}(\cU)$, we see that $\Theta$ is analytic.
    Conversely, it can be shown that the converse holds:  {\em any 
    contractive analytic operator-valued function $\Theta$ on ${\mathbb 
    D}$ induces a contractive multiplier from $H^{2}_{\bbeta}(\cU)$ 
    into $H^{2}_{\bbeta}(\cY)$.}  The following more general 
    formulation was suggested to us by the referee.
    
    \begin{theorem} \label{T:conmult}
	Let $\bbeta = \{ \beta_{k}\}_{k \ge 0}$ be any positive 
	non-increasing weight sequence such that $\beta_{k}^{1/k}\to 1$ as 
        $k \to \infty$.  Let $\cU$ and $\cY$ be auxiliary 
	Hilbert spaces and let $\Theta$ be an $\cL(\cU, \cY)$-valued 
	function on the open unit disk ${\mathbb D}$. 
	Then $\Theta$ is a contractive multiplier from 
	$H^{2}_{\bbeta}(\cU)$ into $H^{2}_{\bbeta}(\cY)$ if and only 
	if $\Theta$ is analytic with $\|\Theta(z) \| \le 1$ for $z \in {\mathbb D}$.
 \end{theorem}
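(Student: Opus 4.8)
The plan is to prove the two implications separately; the forward ("only if") direction is essentially contained in the discussion preceding the theorem in Remark~\ref{R:multipliers} and uses only positivity of the $\beta_j$, while the converse carries the real content and is the only place where the non-increasing hypothesis enters.

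For the "only if" direction I would argue exactly as sketched above: if $M_\Theta$ is contractive, the standard reproducing-kernel identity $M_\Theta^*\colon K_\bbeta(\cdot,\zeta)y\mapsto K_\bbeta(\cdot,\zeta)\Theta(\zeta)^*y$ shows that the kernel $L(z,\zeta)=(I-\Theta(z)\Theta(\zeta)^*)K_\bbeta(z,\zeta)$ in \eqref{deBRker} is positive on $\mathbb{D}\times\mathbb{D}$. Restricting to the diagonal forces $\Theta(z)\Theta(z)^*\le I$, hence $\|\Theta(z)\|\le 1$, and applying $M_\Theta$ to constant functions $u\in\cU\subset H^2_\bbeta(\cU)$ yields $\Theta(\cdot)u\in H^2_\bbeta(\cY)$ for every $u$, whence $\Theta$ is analytic.

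For the converse the heart of the matter is to show that $\|\Theta(z)\|\le 1$ together with $\bbeta$ non-increasing forces $L$ to be a positive kernel; the passage from positivity of $L$ to contractivity of $M_\Theta$ is then routine. I would obtain positivity of $L$ from the Schur-product factorization
$$
L(z,\zeta)=\frac{I_\cY-\Theta(z)\Theta(\zeta)^*}{1-z\overline{\zeta}}\cdot\bigl[(1-z\overline{\zeta})K_\bbeta(z,\zeta)\bigr].
$$
The first factor is positive by the classical operator-valued Schur lemma for the unweighted Hardy space $H^2$: an analytic $\Theta$ with $\|\Theta(z)\|\le 1$ on $\mathbb{D}$ is a contractive multiplier of $H^2$, which is equivalent to positivity of its Szeg\H{o}-weighted kernel (see, e.g., \cite{NF}). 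The second factor is positive because, writing $w=z\overline{\zeta}$,
$$
(1-w)R_\bbeta(w)=\beta_0^{-1}+\sum_{j\ge 1}\bigl(\beta_j^{-1}-\beta_{j-1}^{-1}\bigr)w^j,
$$
and every coefficient is non-negative precisely because $\bbeta$ is non-increasing; thus $(1-z\overline{\zeta})K_\bbeta(z,\zeta)=\sum_{j\ge 0}a_jz^j\overline{\zeta}^{\,j}$ with $a_j\ge 0$ is a convergent sum of positive rank-one kernels. The Schur product theorem then gives that $L$ is positive.

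Finally, to deduce $\|M_\Theta\|\le 1$ from positivity of $L$, I would define a map $W$ on the dense linear span of the kernel sections by $W\colon K_\bbeta(\cdot,\zeta)y\mapsto K_\bbeta(\cdot,\zeta)\Theta(\zeta)^*y$; positivity of $L$ is exactly the statement that $W$ is contractive there, so it extends to a contraction $W\colon H^2_\bbeta(\cY)\to H^2_\bbeta(\cU)$, and a one-line adjoint computation identifies $W^*=M_\Theta$, giving $\|M_\Theta\|=\|W\|\le 1$. I expect the main obstacle to be conceptual rather than technical: the whole argument hinges on spotting the factorization above, after which the only genuine external input is the classical Hardy-space Schur lemma, and the weighted structure contributes only through the elementary telescoping identity, which is exactly where "non-increasing" is indispensable.
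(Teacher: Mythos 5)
Your proof is correct and follows essentially the same route as the paper: both directions hinge on the same factorization of the kernel $L(z,\zeta)=(I-\Theta(z)\Theta(\zeta)^{*})K_{\bbeta}(z,\zeta)$ into the Szeg\H{o}-type factor times $(1-z\overline{\zeta})K_{\bbeta}(z,\zeta)$, followed by the Schur product theorem (valid here since the second factor is scalar-valued). The only cosmetic difference is that you verify positivity of the second factor by the telescoping coefficient identity $(1-w)R_{\bbeta}(w)=\beta_{0}^{-1}+\sum_{j\ge 1}(\beta_{j}^{-1}-\beta_{j-1}^{-1})w^{j}$, whereas the paper invokes the equivalent fact that $S_{\bbeta}$ is contractive on $H^{2}_{\bbeta}$; both rest on the non-increasing hypothesis in exactly the same way.
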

 
 \begin{proof}
     The necessity direction follows by the same argument as sketched 
     before the statement of the theorem.
     
\smallskip

     Conversely assume that $\Theta$ is analytic with 
     contractive values on ${\mathbb D}$.  By the argument sketched 
     above, we see that $\Theta$ is a contractive multiplier if and 
     only if the associated kernel $L$ \eqref{deBRker} is a positive 
     kernel on ${\mathbb D}$.  The kernel $L$ factors in the form
     $$
     L(z, \zeta) = \frac{I_{\cY} - \Theta(z) \Theta(\zeta)^{*}}{1 - 
     z \overline{\zeta}}  \cdot (1 - z \overline{\zeta}) 
     K_{\bbeta}(z, \zeta).
     $$
     The first factor
     $\; \frac{I_{\cY} - \Theta(z) \Theta(\zeta)^{*}}{1 - z 
	\overline{\zeta}} \; $
    is a positive kernel on ${\mathbb D}$ since $\Theta$ is a 
    contractive multiplier from $H^{2}(\cU)$ into $H^{2}(\cY)$.  The 
    second factor $\; (1 - z \overline{\zeta}) K_{\bbeta}(z, \zeta)\; $
    is a positive kernel on ${\mathbb D}$ since the shift operator 
    $S_{\bbeta}$ is a contraction on $H^{2}(\bbeta)$ (here we use 
    that the weight sequence $\bbeta$ is non-increasing).  By the 
    Schur theorem about Schur products of positive semidefinite 
    matrices, it follows that the associated kernel $L$ is a positive 
    kernel since it is the product of two positive kernels (see e.g. 
    \cite[Section 1.8]{Aron}).  Note that the application of the 
    Schur theorem is fine as long as one of the kernels is 
    scalar-valued.  It now follows that indeed $\Theta$ is a 
    contractive multiplier from $H^{2}_{\bbeta}(\cU)$ into 
    $H^{2}_{\bbeta}(\cY)$.
     \end{proof}
    
 We note that this result sharpens the result of Giselsson-Olofsson 
\cite[Proposition 4.1]{GO}.
} \end{remark}

\begin{remark} \label{R:c-i-ss}
{\em It is of interest to consider Theorem \ref{T:4.1}
    for the case condition (1) is strengthened to}
    ($1^{\prime}$) $\; \cM$ is isometrically contained in
        $H^2_\bbeta(\cY)$.
{\em  Unlike the classical Hardy space case ($\beta_{j} = 1$ for $j = 
0,1,2, \dots$), condition (3) in Theorem \ref{T:4.1} is not automatic 
and it may not be the case that $\cM = \Theta \cdot 
H^{2}_{\bbeta}(\cU)$ with $\Theta$ equal to a partially isometric 
multiplier; see Theorem 4.1 in \cite{GO} for a related result.  For 
more detailed discussion of the case where 
$\beta_{j} = \frac{ j!(n-1)!}{(n+j-1)!}$ for a positive integer $n$, see Remark 7.4 in 
\cite{BBberg}.
}\end{remark}

\subsection{Isometric representations of $S_{\bbeta}$-invariant 
subspaces via inner function families}

In this section we obtain a finer representation for 
$S_{\bbeta}$-invariant subspaces using a generalization of inner 
functions which we call an {\em inner family} (see Definition 
\ref{D:infam} below). 

\smallskip

We start with a general observation. If the subspace $\cM \subset  H^2_\bbeta(\cY)$ is 
$S_\bbeta$-invariant, 
then $\cM^{\perp}$ is $S_\bbeta^*$-invariant and, by part (3) of 
Theorem \ref{T:1.2}, we may find a $\bbeta$-output stable exactly 
$\bbeta$-observable  pair $(C,A)$ so that $\cM^{\perp} =
    \operatorname{Ran} \cO_{\bbeta, C,A}$; in fact, we may take $(C,A)$ to be the
    model output pair $(C,A) = (E|_{\cM^{\perp}},
    S_{\bbeta}^{*}|_{\cM^{\perp}})$, and $\cO_{\bbeta,C,A}$ amounts to
    the inclusion map of $\cM^{\perp}$ into $H^2_\bbeta(\cY)$. 

\smallskip

Since 
$\|{\cO}_{\bbeta,C,A}x\|_{H^2_\bbeta(\cY)}^2=
\langle \cG_{\bbeta,C,A}x, \, x\rangle_{\cX}$ for every $x\in\cX$, it follows 
that 
$\cM^{\perp}$ is a reproducing kernel Hilbert space  with reproducing kernel 
$$
K_{\cM^\perp}(z,\zeta) = CR_\bbeta(zA) \cG_{\bbeta,C,A}^{-1}R_\bbeta(\zeta 
A)^*C^{*}.
$$
It then follows that $\cM = \left( \cM^{\perp} \right)^{\perp}$ has reproducing 
kernel
\begin{equation}   
\label{kM}
K_{\cM}(z,\zeta) = K_\bbeta(z,\zeta)\cdot I_{\cY} -
CR_\bbeta(zA) \cG_{\bbeta,C,A}^{-1}R_\bbeta(\zeta A)^*C^{*}.
    \end{equation}
Since 
$$
\bigcap_{k \ge 0} S_{\bbeta}^{k} \cM \subset \bigcap_{k \ge 0} S_{\bbeta}^{k}
H^2_\bbeta(\cY) = \{0\},
$$
we can  decompose $\cM$ into the orthogonal sum
\begin{equation}
\cM=\bigoplus_{k=0}^\infty \left(S^{k}_\bbeta\cM\ominus S^{k+1}_\bbeta\cM\right).
\label{cmdec} 
\end{equation}
To compute  the reproducing kernel for the subspace $S^{k}_\bbeta\cM\ominus 
S^{k+1}_\bbeta\cM$, we first characterize the space  $(S^k_{\bbeta} \cM)^{\perp}$
in terms of the shifted observability operator ${\Ob}^{(k)}_{\bbeta,C,A}$ defined 
in 
\eqref{defR}.
\begin{proposition}   \label{P:SMperp}
    The space $(S^k_{\bbeta} \cM)^{\perp}$ is characterized as
 \begin{equation}   \label{SMperp}
     \left( S^k_{\bbeta} \cM\right)^{\perp} = 
\left(\bigoplus_{j=0}^{k-1}S_\bbeta^j\cY\right)\bigoplus
    S_\bbeta^k \operatorname{Ran} {\Ob}^{(k)}_{\bbeta,C,A}
 \end{equation}
 where we identify the first term with the subspace of polynomials of degree at 
most $k-1$  
in $H^2_\bbeta(\cY)$.
 \end{proposition}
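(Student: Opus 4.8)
The plan is to realize $(S_\bbeta^k \cM)^\perp$ as an orthogonal sum by inserting the intermediate subspace $S_\bbeta^k H^2_\bbeta(\cY)$ between $S_\bbeta^k\cM$ and the whole space. First I would record that $S_\bbeta$ is bounded below (it is left-invertible by \eqref{1.6}), so $S_\bbeta^k$ maps closed subspaces to closed subspaces; hence $S_\bbeta^k\cM \subseteq S_\bbeta^k H^2_\bbeta(\cY) \subseteq H^2_\bbeta(\cY)$ is a chain of closed subspaces and
$$ (S_\bbeta^k\cM)^\perp = \left(S_\bbeta^k H^2_\bbeta(\cY)\right)^\perp \oplus \left(S_\bbeta^k H^2_\bbeta(\cY)\ominus S_\bbeta^k\cM\right). $$
The first summand is immediate: $S_\bbeta^k H^2_\bbeta(\cY)=z^k H^2_\bbeta(\cY)$ consists precisely of the functions whose Taylor coefficients of order $<k$ vanish, so its orthogonal complement is the space of $\cY$-valued polynomials of degree at most $k-1$, which is $\bigoplus_{j=0}^{k-1}S_\bbeta^j\cY$ under the stated identification. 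It therefore remains only to identify the second summand as $S_\bbeta^k\operatorname{Ran}\Ob^{(k)}_{\bbeta,C,A}$.

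For the inclusion $\supseteq$, the key computation I would carry out is the weight-cancellation identity
$$ \langle S_\bbeta^k \Ob^{(k)}_{\bbeta,C,A}x,\, S_\bbeta^k m\rangle_{H^2_\bbeta(\cY)} = \langle \cO_{\bbeta,C,A}x,\, m\rangle_{H^2_\bbeta(\cY)}, \qquad x\in\cX,\ m\in\cM. $$
Writing $m=\sum_j m_j z^j$ and expanding both sides in coordinates via \eqref{defR} and \eqref{0.6}, the $j$-th coefficient of $\Ob^{(k)}_{\bbeta,C,A}x$ is $\beta_{j+k}^{-1}CA^j x$ while the weight attached to $z^{j+k}$ in $H^2_\bbeta(\cY)$ is $\beta_{j+k}$, so the left side collapses to $\sum_j\langle CA^j x, m_j\rangle_\cY$, which is exactly $\langle \cO_{\bbeta,C,A}x, m\rangle$. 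Since $\operatorname{Ran}\cO_{\bbeta,C,A}=\cM^\perp$, the right side vanishes; as $S_\bbeta^k\Ob^{(k)}_{\bbeta,C,A}x$ visibly lies in $S_\bbeta^k H^2_\bbeta(\cY)$, this gives the inclusion.

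For the reverse inclusion $\subseteq$, I would start with an arbitrary $g=z^k h\in S_\bbeta^k H^2_\bbeta(\cY)$ orthogonal to $S_\bbeta^k\cM$ and produce a preimage by applying the backward shift. Since $\langle S_\bbeta^{*k}g, m\rangle = \langle g, S_\bbeta^k m\rangle = 0$ for all $m\in\cM$, the element $S_\bbeta^{*k}g$ lies in $\cM^\perp=\operatorname{Ran}\cO_{\bbeta,C,A}$, so (using injectivity of $\cO_{\bbeta,C,A}$ coming from exact $\bbeta$-observability) there is a unique $x\in\cX$ with $\cO_{\bbeta,C,A}x = S_\bbeta^{*k}g$. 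Matching Taylor coefficients via \eqref{0.6} and the iterated backward-shift formula \eqref{3.1a} then forces $\beta_j^{-1}CA^j x = \tfrac{\beta_{j+k}}{\beta_j}h_j$, that is $CA^j x = \beta_{j+k}h_j$; feeding this into \eqref{defR} yields $S_\bbeta^k\Ob^{(k)}_{\bbeta,C,A}x = \sum_j h_j z^{j+k} = g$, so $g\in S_\bbeta^k\operatorname{Ran}\Ob^{(k)}_{\bbeta,C,A}$.

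Combining the two inclusions gives $S_\bbeta^k H^2_\bbeta(\cY)\ominus S_\bbeta^k\cM = S_\bbeta^k\operatorname{Ran}\Ob^{(k)}_{\bbeta,C,A}$ and hence \eqref{SMperp}. The main obstacle to keep in view is that $S_\bbeta$ is not an isometry, so orthogonality relations cannot simply be transported through $S_\bbeta^k$; the entire argument hinges on the exact cancellation of the weights $\beta_{j+k}$ in the key identity, which is precisely what lets the unweighted-looking pairing $\sum_j\langle CA^j x, m_j\rangle$ reappear as the genuine $H^2_\bbeta(\cY)$ inner product $\langle \cO_{\bbeta,C,A}x, m\rangle$. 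A secondary point to dispatch carefully is the closedness underlying the orthogonal decomposition, which I would justify once at the outset from left-invertibility of $S_\bbeta$.
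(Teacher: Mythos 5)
Your proof is correct and takes essentially the same route as the paper's: both split off the polynomials of degree at most $k-1$ and then characterize which elements of $z^k H^2_\bbeta(\cY)$ are orthogonal to $S_\bbeta^k\cM$ by applying the backward shift (the paper uses $(S_\bbeta^k)^*S_\bbeta^k$, you use $S_\bbeta^{*k}$ on $g=z^kh$, which is the same operator identity), invoking $\cM^\perp=\operatorname{Ran}\cO_{\bbeta,C,A}$, and matching Taylor coefficients to identify the result with $S_\bbeta^k\operatorname{Ran}\Ob^{(k)}_{\bbeta,C,A}$. Your explicit weight-cancellation identity for the containment $\supseteq$ and your remark on closedness via left-invertibility are just a slightly more careful packaging of the single chain of equivalences the paper runs.
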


\begin{proof}
We wish to characterize all functions $f(z)={\displaystyle
\sum_{j=0}^{\infty} f_{j} z^{j}}$ which are orthogonal to $S_{\bbeta}^k \cM$ 
in $H^2_\bbeta(\cY)$. We may write 
$f(z)  = p(z) + z^k \widetilde f(z)$ where $p(z)={\displaystyle\sum_{j=0}^{k-1} 
f_{j} z^{j}}$.
Clearly polynomials of degree at most $k-1$ are orthogonal to
$S^k_{\bbeta} \cM$, so it suffices to characterize which functions of the   
form $z^k \widetilde f(z)$ are orthogonal to $S^k_{\bbeta} \cM$. To this end, 
observe that 
$S_\bbeta^k\widetilde{f}$ is orthogonal to $S^k_{\bbeta} \cM$ if and only if the 
function 
$(S^k_{\bbeta})^*S^k_{\bbeta}\widetilde{f}$ belongs to 
$\cM^\perp= \operatorname{Ran}\cO_{\bbeta,C,A}$.
It follows from the formula \eqref{3.1a} that 
$$
(S^k_{\bbeta})^*S^k_{\bbeta} \colon  \sum_{j=0}^\infty 
\widetilde{f}_jz^j\mapsto \sum_{j=0}^\infty 
\frac{\beta_{j+k}}{\beta_j} \, \widetilde{f}_jz^j.
$$
We thus conclude that $S_\bbeta^k\widetilde{f}$ is orthogonal to $S^k_{\bbeta} 
\cM$ if and only 
if there exists a vector  $x\in\cX$ such that 
$$
\sum_{j=0}^\infty \frac{\beta_{j+k}}{\beta_j}
\, \widetilde{f}_jz^j=CR_\bbeta(zA)x=\sum_{j=0}^\infty \left(\beta_j^{-1}\cdot 
CA^jx\right)z^j.
$$ 
Equating the corresponding Taylor coefficients gives
$$
\widetilde{f}_j=\beta_{j+k}^{-1}\cdot CA^jx\quad\mbox{for all}\quad j\ge 0
$$
and therefore,
$$
\widetilde{f}(z)=\sum_{j=0}^\infty \widetilde{f}_jz^j=\sum_{j=0}^\infty 
\left(\beta_{j+k}^{-1}\cdot CA^jx\right)z^j={\Ob}^{(k)}_{\bbeta,C,A}x.
$$
Thus, $\widetilde{f}\in\operatorname{Ran} {\Ob}^{(k)}_{\bbeta,C,A}$.
As the analysis is necessary and sufficient, the result follows.
    \end{proof}

    With this result in hand, it is straightforward to derive the   
    kernel function for the space $S_{\bbeta}^k \cM$ with respect to the   
    metric inherited from $H^2_\bbeta(\cY)$.

   \begin{proposition}  \label{P:6.2}  
Let $\cM$ be a closed shift-invariant subspace of $H^2_\bbeta(\cY)$ with 
reproducing kernel given by \eqref{kM}. Then the reproducing kernel functions for 
$S^k_\bbeta\cM$
and $S^{k}_\bbeta\cM\ominus S^{k+1}_\bbeta\cM$ 
are given  by
\begin{align}
& K_{S^k_\bbeta\cM}(z,\zeta)=  z^k\overline\zeta^k 
\left(R_{\bbeta,k}(z\overline{\zeta})I_{\cY}
-CR_{\bbeta,k}(zA)\left(\Gr^{(k)}_{\bbeta,C,A}\right)^{-1}R_{\bbeta,k}(\zeta 
A)^*C^*\right),\label{kscm}\\
& K_{S^{k}_\bbeta\cM\ominus S^{k+1}_\bbeta\cM}(z,\zeta)=  z^{k}\overline\zeta^{k}
\left(\beta_{k}^{-1}I_\cY-CR_{\bbeta,k}(zA)
\left(\Gr^{(k)}_{\bbeta,C,A}\right)^{-1}R_{\bbeta,k}(\zeta A)^*C^*\right.\notag\\
& \hspace{45mm}  \left.
+z\overline\zeta CR_{\bbeta,k+1}(zA)
\left(\Gr^{(k+1)}_{\bbeta,C,A}\right)^{-1}R_{\bbeta,k+1}(\zeta A)^*C^*
\right).
\label{kdif}
\end{align}
\end{proposition}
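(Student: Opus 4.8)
The plan is to derive both kernel formulas from the orthogonal decomposition of $(S^k_\bbeta\cM)^\perp$ furnished by Proposition \ref{P:SMperp}, using the elementary fact that the reproducing kernel of a closed subspace equals the kernel of the ambient space minus the kernel of its orthogonal complement. Throughout, $(C,A)$ denotes the exactly $\bbeta$-observable $\bbeta$-output-stable pair underlying \eqref{kM}, so that all gramians $\Gr^{(k)}_{\bbeta,C,A}$ are strictly positive definite by Proposition \ref{P:exactbetaobs}.

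First I would establish \eqref{kscm}. Since $S^k_\bbeta\cM$ is closed, $K_{S^k_\bbeta\cM} = K_{H^2_\bbeta(\cY)} - K_{(S^k_\bbeta\cM)^\perp}$, and by Proposition \ref{P:SMperp} the complement splits orthogonally into the polynomials of degree at most $k-1$ and $S_\bbeta^k\operatorname{Ran}{\Ob}^{(k)}_{\bbeta,C,A}$, so $K_{(S^k_\bbeta\cM)^\perp}$ is the sum of the two corresponding kernels. The polynomial piece has kernel $\sum_{j=0}^{k-1}\beta_j^{-1}z^j\overline\zeta^j I_\cY$, which is precisely the partial sum appearing when one writes $K_{H^2_\bbeta(\cY)}(z,\zeta) = R_\bbeta(z\overline\zeta)I_\cY = \sum_{j=0}^{k-1}\beta_j^{-1}(z\overline\zeta)^j I_\cY + z^k\overline\zeta^k R_{\bbeta,k}(z\overline\zeta)I_\cY$; hence these two pieces cancel and only the term $z^k\overline\zeta^k R_{\bbeta,k}(z\overline\zeta)I_\cY$ survives from the ambient kernel. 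For the kernel of $S_\bbeta^k\operatorname{Ran}{\Ob}^{(k)}_{\bbeta,C,A}$ I would observe that $V_k\colon x\mapsto S_\bbeta^k{\Ob}^{(k)}_{\bbeta,C,A}x = z^k CR_{\bbeta,k}(zA)x$ satisfies $\|V_k x\|^2_{H^2_\bbeta(\cY)} = \langle\Gr^{(k)}_{\bbeta,C,A}x,x\rangle$ by \eqref{defRa}, so $V_k$ is an isometry from $\cX$ equipped with the $\Gr^{(k)}_{\bbeta,C,A}$-inner product onto its range. The reproducing-kernel principle already invoked in part (2) of Theorem \ref{T:1.2} then gives this range the kernel $z^k\overline\zeta^k CR_{\bbeta,k}(zA)(\Gr^{(k)}_{\bbeta,C,A})^{-1}R_{\bbeta,k}(\zeta A)^*C^*$. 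Subtracting it from the surviving ambient term yields \eqref{kscm}.

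Second, to obtain \eqref{kdif} I would use the orthogonal decomposition \eqref{cmdec}, which gives $K_{S^k_\bbeta\cM\ominus S^{k+1}_\bbeta\cM} = K_{S^k_\bbeta\cM} - K_{S^{k+1}_\bbeta\cM}$, and substitute \eqref{kscm} for both indices. The two gramian terms reproduce the last two terms of \eqref{kdif} directly, while the scalar difference $z^k\overline\zeta^k R_{\bbeta,k}(z\overline\zeta) - z^{k+1}\overline\zeta^{k+1}R_{\bbeta,k+1}(z\overline\zeta)$ collapses to $\beta_k^{-1}z^k\overline\zeta^k$ via the single-step recursion $R_{\bbeta,k}(w) = wR_{\bbeta,k+1}(w) + \beta_k^{-1}$ (established in the proof of Proposition \ref{P:3.5}) evaluated at $w = z\overline\zeta$. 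Factoring out $z^k\overline\zeta^k$ then produces \eqref{kdif}.

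The only genuinely delicate point is the computation of the reproducing kernel for $S_\bbeta^k\operatorname{Ran}{\Ob}^{(k)}_{\bbeta,C,A}$: one must verify that the norm inherited from $H^2_\bbeta(\cY)$ is exactly the one induced by $\Gr^{(k)}_{\bbeta,C,A}$ and that this gramian is invertible, so that the point-evaluation argument producing $\phi_k(z)(\Gr^{(k)}_{\bbeta,C,A})^{-1}\phi_k(\zeta)^*$ with $\phi_k(z) = z^k CR_{\bbeta,k}(zA)$ is valid. Both ingredients are supplied by \eqref{defRa} and Proposition \ref{P:exactbetaobs}; everything else reduces to bookkeeping with power series and the recursion for $R_{\bbeta,k}$.
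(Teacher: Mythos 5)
Your proposal is correct and follows essentially the same route as the paper's own proof: both rest on Proposition \ref{P:SMperp} together with the lifted-norm identity \eqref{defRa} (plus invertibility of the gramians via Proposition \ref{P:exactbetaobs}) to identify the kernel of $S_\bbeta^k\operatorname{Ran}{\Ob}^{(k)}_{\bbeta,C,A}$, and both obtain \eqref{kdif} by subtracting the kernels for consecutive values of $k$. The only cosmetic difference is that you collapse the scalar part via the recursion $R_{\bbeta,k}(w)=wR_{\bbeta,k+1}(w)+\beta_k^{-1}$, while the paper equivalently subtracts the kernels of $S^{k}_\bbeta H^2_\bbeta(\cY)$ and $S^{k+1}_\bbeta H^2_\bbeta(\cY)$ directly.
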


\begin{proof}  We first derive the kernel $\boldsymbol{\mathfrak{K}}$
    for the space $S^k_{\bbeta} \operatorname{Ran} {\Ob}^{(k)}_{\bbeta,C,A}$ 
(with inner product induced by $H^2_\bbeta(\cY)$).  By \eqref{defRa},
$$
\|S^k_\bbeta {\Ob}^{(k)}_{\bbeta,C,A}x\|^2_{H^2_\bbeta(\cY)}
=\left\langle \Gr^{(k)}_{\bbeta,C,A} x, \, x\right\rangle_{\cX},
$$
and thus by the general principle from \cite{NFRKHS}, it follows that
the reproducing kernel for $S^k_{\bbeta} \operatorname{Ran} 
{\Ob}^{(k)}_{\bbeta,C,A}$ 
is given by
\begin{equation}
\boldsymbol{\mathfrak{K}}_k(z,\zeta)=  z^k\overline\zeta^k
CR_{\bbeta,k}(zA)\left({\Gr}^{(k)}_{\bbeta,C,A}\right)^{-1}R_{\bbeta,k}(\zeta 
A)^*C^*.
\label{deffrakk}  
\end{equation}
From the formula \eqref{SMperp} for $(S_{\bbeta} \cM)^{\perp}$, we deduce that
\begin{align}
    S^k_{\bbeta} \cM & = \left(\bigoplus_{j=0}^{k-1}S_\bbeta^j\cY\right)^{\perp} 
\bigcap 
\left(S^k_{\bbeta} \operatorname{Ran}{\Ob}^{(k)}_{\bbeta,C,A} 
\right)^{\perp}\notag  \\
    & = S^k_{\bbeta}H^2_\bbeta(\cY) \ominus  S^k_{\bbeta} 
\operatorname{Ran}{\Ob}^{(k)}_{\bbeta,C,A}.
    \label{SM}
\end{align}
Since the reproducing kernel of the subspace $S_\bbeta^j\cY$ of $H^2_\bbeta(\cY)$ 
is 
$z^j\overline{\zeta}^j \beta_j^{-1} I_{\cY}$, we deduce that 
$S^k_{\bbeta} H^2_\bbeta(\cY)=H^2_\bbeta(\cY)\ominus 
{\displaystyle\left(\bigoplus_{j=0}^{k-1}S_\bbeta^j\cY\right)}$
has reproducing kernel
\begin{align*}
K_{S^k_{\bbeta} H^2_\bbeta(\cY)}(z, \zeta)&=
(K_\bbeta(z, \zeta) - 
\sum_{j=0}^{k-1}\beta_j^{-1}z^j\overline{\zeta}^j)I_{\cY} \notag \\
&=(R_\bbeta(z\overline{\zeta})-\sum_{j=0}^{k-1}\beta_j^{-1}z^j\overline{\zeta}^j 
)I_{\cY}= z^{k} \overline{\zeta}^{k} R_{\bbeta,k}(z\overline{\zeta})I_\cY. 
\end{align*}
Hence from \eqref{SM} we deduce that
$$
K_{S^k_{\bbeta} \cM} (z,\zeta) = K_{S^k_{\bbeta} H^2_\bbeta(\cY)}(z, 
\zeta) -\boldsymbol{\mathfrak K}_k(z,\zeta) 
$$
and formula \eqref{kscm} follows from \eqref{deffrakk}. Finally, since
$$
K_{S^{k}_{\bbeta} 
H^2_\bbeta(\cY)}(z,\zeta)-K_{S^{k+1}_{\bbeta}H^2_\bbeta(\cY)}(z,
\zeta)=z^{k}\overline\zeta^{k}\beta_{k}^{-1} I_\cY,
$$
we have from the two latter equalities
\begin{align*}
K_{S^{k}_\bbeta\cM\ominus S^{k+1}_\bbeta\cM}(z,\zeta)&=K_{S^{k}_{\bbeta} \cM} (z, 
\zeta)-K_{S^{k+1}_{\bbeta} \cM} (z,\zeta)\\
&=K_{S^{k}_{\bbeta} H^2_{\bbeta}(\cY)}(z, 
\zeta)-\boldsymbol{\mathfrak K}_{k}(z, \zeta)\notag\\
&\qquad-K_{S^{k+1}_{\bbeta}H^2_\bbeta(\cY)}(z,\zeta)+\boldsymbol{\mathfrak 
K}_{k+1}(z, \zeta)\\
&=z^{k}\overline\zeta^{k}\beta_{k}^{-1} I_\cY-\boldsymbol{\mathfrak K}_{k}(z, 
\zeta)+
\boldsymbol{\mathfrak K}_{k+1}(z, \zeta)
\end{align*}
which implies \eqref{kdif} due to \eqref{deffrakk}.
\end{proof}
\begin{lemma}  \label{L:6.8}
Given an integer $k\ge 1$ and an exactly $\bbeta$-observable $\bbeta$-output 
stable pair
$(C,A)$, construct operators $B_k \in \cL(\cU_k, \cX)$ and $D_k \in \cL(\cU_k, 
\cY)$
as in Lemma \ref{L:5.6} and let $\Theta_{k}$ be the associated
function given by \eqref{2.10}. Then the kernel \eqref{kdif} can be factored as
\begin{equation}
K_{S^{k}_\bbeta\cM\ominus 
S^{k+1}_\bbeta\cM}(z,\zeta)=z^{k}\overline{\zeta}^{k}\Theta_{k}(z)\Theta_{k}(\zeta)^*.
\label{id6}
\end{equation}
\end{lemma}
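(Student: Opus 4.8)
The plan is to recognize that this lemma is an almost immediate consequence of Lemma \ref{L:frakcoisom} combined with the kernel formula \eqref{kdif}, so that essentially no new work is required. First I would observe that the operators $B_k$ and $D_k$ furnished by Lemma \ref{L:5.6} satisfy, by their very construction, the weighted-coisometry condition \eqref{wghtcoisom} (indeed they satisfy both \eqref{wghtcoisom} and the isometry relation \eqref{isom}). Since $(C,A)$ is assumed to be an exactly $\bbeta$-observable $\bbeta$-output stable pair, the hypotheses of Lemma \ref{L:frakcoisom} are all in force, and I would invoke that lemma to conclude that the identity \eqref{frakkerid} holds for the function $\Theta_{k}$ defined by \eqref{2.10}.

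Next I would simply rearrange \eqref{frakkerid} to solve for $\Theta_{k}(z)\Theta_{k}(\zeta)^{*}$, obtaining
$$
\Theta_{k}(z)\Theta_{k}(\zeta)^{*} = \beta_k^{-1}I_{\cY} - CR_{\bbeta,k}(zA)\left(\Gr^{(k)}_{\bbeta,C,A}\right)^{-1}R_{\bbeta,k}(\zeta A)^*C^* + z \overline{\zeta} \cdot CR_{\bbeta,k+1}(zA)\left(\Gr^{(k+1)}_{\bbeta,C,A}\right)^{-1}R_{\bbeta,k+1}(\zeta A)^*C^*.
$$
The right-hand side of this display is term-for-term identical with the quantity enclosed in the large parentheses on the right-hand side of \eqref{kdif}. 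Multiplying both sides by the scalar factor $z^{k}\overline{\zeta}^{k}$ and then comparing with \eqref{kdif} yields the desired factorization \eqref{id6} directly.

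The main obstacle here is not located in this lemma at all; it was already surmounted in the proof of Lemma \ref{L:frakcoisom}, where the detailed algebraic manipulation of the resolvent-type relation $R_{\bbeta,k}(zA)=\beta_k^{-1}I+zAR_{\bbeta,k+1}(zA)$ together with the weighted-coisometry equations \eqref{relations1} was carried out to establish \eqref{frakkerid}. With that identity available, the content of the present lemma reduces to matching two expressions by inspection. The only point meriting a word of care is that the inverses $\left(\Gr^{(k)}_{\bbeta,C,A}\right)^{-1}$ and $\left(\Gr^{(k+1)}_{\bbeta,C,A}\right)^{-1}$ appearing in both formulas actually exist; this is guaranteed by Proposition \ref{P:exactbetaobs}, which ensures that $\Gr^{(k)}_{\bbeta,C,A}$ is strictly positive definite for every $k \ge 0$ once $(C,A)$ is exactly $\bbeta$-observable, so that the realization \eqref{2.10} of $\Theta_{k}$ and all the kernel expressions are well defined.
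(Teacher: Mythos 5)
Your proposal is correct and follows essentially the same route as the paper: the paper's proof likewise invokes Lemma \ref{L:frakcoisom} to obtain \eqref{frakkerid}, multiplies by $z^{k}\overline{\zeta}^{k}$, and combines with \eqref{kdif}. Your added remarks (that the Cholesky construction of Lemma \ref{L:5.6} supplies the weighted-coisometry hypothesis \eqref{wghtcoisom}, and that Proposition \ref{P:exactbetaobs} guarantees invertibility of the shifted gramians) are exactly the justifications the paper leaves implicit.
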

\begin{proof}  By Lemma \ref{L:frakcoisom}, identity \eqref{frakkerid} holds. 
Multiplying both 
parts
of \eqref{frakkerid} by $z^k\bar\zeta^k$ and combining the obtained equality with
\eqref{kdif} we get \eqref{id6}.
\end{proof}

\begin{definition}
In what follows, we will use the symbol $\bigvee$ for the closed linear span.
\label{D:infam}
A family of operator-valued functions $\{
\Theta_{k} \colon {\mathbb D} \to \cL(\cU_{k}, \cY)\}_{k=0}^{\infty}$ will be 
said to be a 
{\em $\bbeta$-inner function family} if, for each $k\ge 0$, we have:
\begin{enumerate}
    \item $M_{\Theta_{k}} \colon S_{\bf 1}^{k} \cU_{k} \to H^2_\bbeta(\cY)$ is  
isometric,
    \item $M_{\Theta_{k}} \left( S_{\bf 1}^{k} \cU_{k} \right)$ is
    orthogonal (in $H^2_\bbeta(\cY)$) to ${\displaystyle\bigvee_{\ell>k}
    M_{\Theta_{\ell}} S_{\bf 1}^{\ell} \cU_{\ell}}$,
    \item $S_{\bbeta}^{k+1} M_{\Theta_{k}} \cU_{k} \subset
    {\displaystyle\bigvee_{\ell>k} S_{\bbeta}^{\ell}
    M_{\Theta_{\ell}} \cU_{\ell}}$.
  \end{enumerate}
  An equivalent operator-theoretic characterization of the {\em 
  $\bbeta$-inner function family} property is:
  \begin{enumerate}
      \item[(1$^{\prime}$)] The multiplication operator
      $$
M_{\Theta} = \begin{bmatrix} M_{\Theta_{0}} & M_{\Theta_{1}} &
M_{\Theta_{2}} & \cdots \end{bmatrix}  \colon   
\bigoplus_{k=0}^{\infty} S_{1}^{k} \cU_{k} \to H^2_\bbeta(\cY)
$$
maps the {\em time-varying Hardy space}
$H^{2}(\{\cU_{k}\}_{k \ge 0}): = {\displaystyle\bigoplus_{k=0}^{\infty} S_{\bf 
1}^{k}
\cU_{k}}$ (where ${\bf u}={\displaystyle\bigoplus_{k=0}^{\infty} z^{k} u_{k}} \in
H^{2}(\{\cU_{k}\})$ is assigned the Hardy-space norm $\| {\bf u}\|^{2} = 
{\displaystyle\sum_{k=0}^{\infty}
\| u_{k} \|^{2}}$) isometrically into $H^2_{\bbeta}(\cY)$, and

\item[(2$^{\prime}$)] there is a strictly lower triangular matrix $L = 
[L_{ij}]_{i,j = 0,1,2,\dots}$ (so $L_{ij} = 0$ for $i \le j$) with 
entries $ L_{ij} \in \cL(\cU_{j}, \cU_{i})$
so that 
$$
S_{\bbeta} \begin{bmatrix} M_{\Theta_{0}}  & M_{\Theta_{1}} S_{1} & 
M_{\Theta_{2}} S_{1}^{2} & \cdots \end{bmatrix}  = 
 \begin{bmatrix} M_{\Theta_{0}}  & M_{\Theta_{1}} S_{1} & 
M_{\Theta_{2}} S_{1}^{2} & \cdots \end{bmatrix} L.
$$
\end{enumerate}
\end{definition}

\smallskip

We conclude that
if $\{ \Theta_{k}\}_{k \ge 0}$ is a $\bbeta$-inner function family and if we set
$$  
\cM = \bigoplus_{k=0}^{\infty} \Theta_{k} S_{\bf 1}^{k} \cU_{k} \subset 
H^2_\bbeta(\cY),
$$
it then follows that $\cM$ is $S_{\bbeta}$-invariant and that the multiplication 
operator
\begin{equation}  \label{tvmultop}
M_{\Theta} = \begin{bmatrix} M_{\Theta_{0}} & M_{\Theta_{1}} &
M_{\Theta_{2}} & \cdots \end{bmatrix}  \colon   
\bigoplus_{k=0}^{\infty} S_{1}^{k} \cU_{k} \to H^2_\bbeta(\cY)
\end{equation}
maps the {\em time-varying Hardy space}
$H^{2}(\{\cU_{k}\}_{k \ge 0}): = {\displaystyle\bigoplus_{k=0}^{\infty} S_{\bf 
1}^{k}
\cU_{k}}$ unitarily onto the  $S_{\bbeta}$-invariant subspace $\cM 
\subset 
H^2_\bbeta(\cY)$.
Putting all the pieces together, we arrive at the following converse
of all these observations which amounts to our second analogue of the 
Beurling-Lax
theorem for the weighted Hardy space setting.  To follow the 
statement the reader should refer back to Definitions 
\ref{D:bbeta-real} and \ref{D:conserv-col}.

\begin{theorem}  \label{T:BL3}
Let $\cM$ be a closed $S_{\bbeta}$-invariant subspace of
$H^2_\bbeta(\cY)$.  Then there is a $\bbeta$-inner function family
$\{\Theta_{k}\}_{k \ge 0}$ so that $\cM=M_{\Theta} H^{2}(\{\cU_k\}_{k\ge 0})$ 
(with $M_{\Theta}$ as in \eqref{tvmultop}).

    Furthermore, given the $S_{\bbeta}$-invariant closed subspace $\cM \subset
    H^2_\bbeta(\cY)$, a $\bbeta$-unitary colligation family 
    realization $\left\{U_{k} = \left[ \begin{smallmatrix} A & B_{k} 
    \\ C & D_{k} \end{smallmatrix} \right] \right\}_{k \ge 0}$ for the
   $\bbeta$-inner function family representer
    $\{\Theta_{k}\}_{k\ge 0}$ for $\cM$ can be constructed according to the
    following algorithm:
    \begin{enumerate}
        \item Set $\cX = \cM^{\perp}$ and define $A \in \cL(\cX)$ and
        $C \in \cL(\cX, \cY)$ by
$$
  A = S_{\bbeta}^{*}|_{\cM^{\perp}},  \quad Cf = f(0) \quad\text{for}\quad f \in
  \cM^{\perp}.
$$
\item Construct $\left[ \begin{smallmatrix} B_{k} \\ D_{k}
\end{smallmatrix} \right]$ by solving the Cholesky factorization
problem \eqref{pr6} in Lemma \ref{L:5.6}.

\item Set $\Theta_{k}(z) = \beta_k^{-1}D_{k} + z C R_{\bbeta,k+1}(zA) B_{k}$.
\end{enumerate}
Then $\{\Theta_{k}\}_{k\ge 0}$ is a $\bbeta$-inner function family and any 
$\bbeta$-inner function family
  arises in this way.
 \end{theorem}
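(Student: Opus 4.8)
The plan is to run the stated three-step algorithm and check that it produces a $\bbeta$-inner family with the asserted representation, the substance being a kernel-matching argument (Lemma~\ref{L:6.8}) that identifies the multiplier ranges $S_{\bbeta}^{k}\Theta_{k}\cU_{k}$ with the intrinsic wandering pieces $S_{\bbeta}^{k}\cM\ominus S_{\bbeta}^{k+1}\cM$. First I would carry out Step~(1): since $\cM$ is $S_{\bbeta}$-invariant, $\cM^{\perp}$ is $S_{\bbeta}^{*}$-invariant, so part~(3) of Theorem~\ref{T:1.2} lets me take the model pair $(C,A)=(E|_{\cM^{\perp}},S_{\bbeta}^{*}|_{\cM^{\perp}})$, which is $\bbeta$-output stable and exactly $\bbeta$-observable with $\cM^{\perp}=\operatorname{Ran}\cO_{\bbeta,C,A}$. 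With $(C,A)$ fixed, Lemma~\ref{L:5.6} supplies for each $k\ge0$ operators $B_{k},D_{k}$ solving the Cholesky problem \eqref{pr6} (Step~(2)) so that $U_{k}=\left[\begin{smallmatrix} A & B_{k} \\ C & D_{k}\end{smallmatrix}\right]$ satisfies both \eqref{isom} and \eqref{wghtcoisom}; thus $\{U_{k}\}_{k\ge0}$ is a $\bbeta$-unitary colligation family in the sense of Definition~\ref{D:conserv-col}, and $\Theta_{k}$ defined by \eqref{2.10} (Step~(3)) completes the construction.

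Next I would verify the three conditions of Definition~\ref{D:infam}. Condition~(1) is immediate from part~(3) of Lemma~\ref{L:5.1}: since \eqref{isom} holds, so does \eqref{jul14a}, whence $S_{\bbeta}^{k}M_{\Theta_{k}}$ is isometric on $\cU_{k}$, which is exactly isometry of $M_{\Theta_{k}}\colon S_{\bf 1}^{k}\cU_{k}\to H^{2}_{\bbeta}(\cY)$. For conditions~(2) and~(3) I would invoke Lemma~\ref{L:6.8}: the range $S_{\bbeta}^{k}\Theta_{k}\cU_{k}$, with the norm inherited from $H^{2}_{\bbeta}(\cY)$ (an isometric copy of $\cU_{k}$ by condition~(1)), has reproducing kernel $z^{k}\overline{\zeta}^{k}\Theta_{k}(z)\Theta_{k}(\zeta)^{*}$, which Lemma~\ref{L:6.8} identifies with the kernel of the wandering piece $S_{\bbeta}^{k}\cM\ominus S_{\bbeta}^{k+1}\cM$. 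Since subspaces of $H^{2}_{\bbeta}(\cY)$ with equal reproducing kernels coincide, I obtain
$$
S_{\bbeta}^{k}\Theta_{k}\cU_{k}=S_{\bbeta}^{k}\cM\ominus S_{\bbeta}^{k+1}\cM\qquad(k\ge0).
$$
The mutual orthogonality of the summands in \eqref{cmdec} then gives condition~(2), while condition~(3) follows because $S_{\bbeta}^{k+1}\Theta_{k}\cU_{k}=S_{\bbeta}(S_{\bbeta}^{k}\cM\ominus S_{\bbeta}^{k+1}\cM)$ sits inside $S_{\bbeta}^{k+1}\cM=\bigvee_{\ell>k}(S_{\bbeta}^{\ell}\cM\ominus S_{\bbeta}^{\ell+1}\cM)=\bigvee_{\ell>k}S_{\bbeta}^{\ell}\Theta_{\ell}\cU_{\ell}$.

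With the pieces matched, \eqref{cmdec} reads $\cM=\bigoplus_{k\ge0}S_{\bbeta}^{k}\Theta_{k}\cU_{k}$, so the mutually orthogonal isometries $S_{\bbeta}^{k}M_{\Theta_{k}}$ assemble into a unitary $M_{\Theta}$ from the time-varying Hardy space onto $\cM$, giving $\cM=M_{\Theta}H^{2}(\{\cU_{k}\}_{k\ge0})$. For the converse statement that every $\bbeta$-inner family arises this way, I would begin with an arbitrary $\bbeta$-inner family $\{\Theta_{k}'\}$ with coefficient spaces $\cU_{k}'$, form the $S_{\bbeta}$-invariant subspace $\cM'=\bigoplus_{k\ge0}\Theta_{k}'S_{\bf 1}^{k}\cU_{k}'$ (invariant by the observation preceding the theorem), and apply the already-proven first part to $\cM'$; this yields the construction's family $\{\Theta_{k}\}$ with $S_{\bbeta}^{k}\Theta_{k}\cU_{k}=S_{\bbeta}^{k}\cM'\ominus S_{\bbeta}^{k+1}\cM'$. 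To conclude that $\{\Theta_{k}'\}$ coincides with $\{\Theta_{k}\}$ up to per-level unitaries $W_{k}$ (the gauge freedom $\Theta_{k}\mapsto\Theta_{k}W_{k}$ left undetermined by the kernel $z^{k}\overline{\zeta}^{k}\Theta_{k}\Theta_{k}^{*}$, which matches the essential uniqueness of the Cholesky factorization in Lemma~\ref{L:5.6}), I must show that the given family's pieces $S_{\bbeta}^{k}\Theta_{k}'\cU_{k}'$ are themselves the intrinsic wandering pieces $S_{\bbeta}^{k}\cM'\ominus S_{\bbeta}^{k+1}\cM'$.

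The step I expect to be the main obstacle is precisely this last matching for a general inner family, equivalently the identity $S_{\bbeta}^{j}\cM'=\bigoplus_{\ell\ge j}S_{\bbeta}^{\ell}\Theta_{\ell}'\cU_{\ell}'$ for all $j$. The inclusion $\subseteq$ is easy: condition~(3) gives $S_{\bbeta}\bigl(S_{\bbeta}^{\ell}\Theta_{\ell}'\cU_{\ell}'\bigr)\subseteq\bigvee_{m>\ell}S_{\bbeta}^{m}\Theta_{m}'\cU_{m}'$, and iterating shows $S_{\bbeta}^{j}\cM'\subseteq\bigoplus_{\ell\ge j}S_{\bbeta}^{\ell}\Theta_{\ell}'\cU_{\ell}'$. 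The reverse inclusion is the delicate point: it amounts to $\Theta_{k}'\cU_{k}'\subseteq\cM'$ for every $k$ (equivalently, dense range for the strictly lower-triangular intertwiner $L$ of condition~(2$'$), which is unitarily equivalent to $S_{\bbeta}|_{\cM'}$), and here I expect to need the $\bbeta$-strong stability of $S_{\bbeta}^{*}$ together with the full force of the orthogonality and shift-containment axioms, rather than condition~(3) alone. Establishing this range-density/uniqueness cleanly---so that the abstract wandering decomposition \eqref{cmdec} is realized by, and only by, the multiplier ranges---is what makes the phrase ``arises in this way'' precise.
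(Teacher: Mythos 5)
Your treatment of the main (existence) clause is correct and is in substance the paper's own argument: the paper gives no free-standing proof of Theorem \ref{T:BL3}, presenting it instead as the assembly of the decomposition \eqref{cmdec}, the kernel formulas of Proposition \ref{P:6.2}, the factorization of Lemma \ref{L:6.8}, the Cholesky construction of Lemma \ref{L:5.6}, and the metric statements of Lemma \ref{L:5.1}; your kernel-matching identification $S_{\bbeta}^{k}\Theta_{k}\cU_{k}=S_{\bbeta}^{k}\cM\ominus S_{\bbeta}^{k+1}\cM$ is exactly the intended glue.

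The genuine gap is the converse clause (``any $\bbeta$-inner function family arises in this way''), which you flag as the main obstacle and leave open. Your suspicion that axioms (1)--(3) of Definition \ref{D:infam} alone cannot yield $\Theta_{k}'\cU_{k}'\subset\cM'$ is correct in the strongest sense: that inclusion is simply false under the literal definition, so no argument can close this step without strengthening the definition. Concretely, take $\bbeta={\bf 1}$, $\cY=\bbC$, $\cU_{0}'=\{0\}$, and $\cU_{k}'=\bbC$, $\Theta_{k}'\equiv 1$ for $k\ge 1$: all three axioms hold (each is vacuous or immediate), and $\cM'=\bigoplus_{k\ge1}z^{k}\bbC=zH^{2}$, yet the algorithm applied to $zH^{2}$ produces $\cU_{k}=\bbC$ and $\Theta_{k}(z)=z$ for every $k\ge0$, so the given family cannot coincide with the constructed one under any unitaries $\sigma_{k}$ (the input spaces have different dimensions). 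If one objects to zero input spaces, take instead $\cY=\bbC^{2}$, $\cM'=\{(f,g):g(0)=0\}$, $\Theta_{0}'\equiv\sbm{1\\0}$ on $\cU_{0}'=\bbC$ and $\Theta_{k}'\equiv I_{\bbC^{2}}$ on $\cU_{k}'=\bbC^{2}$ for $k\ge1$: again all axioms hold, but the pieces $z^{k}\Theta_{k}'\cU_{k}'$ are not the wandering pieces --- the parametrization is ``shifted.'' The repair is to add to Definition \ref{D:infam} the requirement $\Theta_{k}\cU_{k}\subset M_{\Theta}H^{2}(\{\cU_{\ell}\}_{\ell\ge0})$ for each $k$ (equivalently, that $S_{\bbeta}^{k}M_{\Theta_{k}}\cU_{k}$ be exactly $S_{\bbeta}^{k}\cM\ominus S_{\bbeta}^{k+1}\cM$); with that in hand your own kernel-matching argument finishes the converse, since conditions (2), (3) plus the added inclusion force $z^{k}\Theta_{k}'\cU_{k}'\subset S_{\bbeta}^{k}\cM'\ominus S_{\bbeta}^{k+1}\cM'$, and componentwise inclusion of one orthogonal decomposition of $\cM'$ inside another forces equality. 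It is worth noting the paper silently makes this identification as well: the first decomposition in \eqref{decom} in the proof of Theorem \ref{T:canfuncmod} asserts $\cM\ominus S_{\bbeta}\cM=M_{\Theta_{0}}\cU_{0}$ for an arbitrary given inner family, which fails in both examples above. Finally, a small correction to an aside: $\Theta_{k}'\cU_{k}'\subset\cM'$ for all $k$ is not equivalent to $L$ having dense range --- $L$ is unitarily equivalent to $S_{\bbeta}|_{\cM'}$, whose range $S_{\bbeta}\cM'$ is closed and proper whenever $\cM'\neq\{0\}$; the correct equivalent statement is that $\operatorname{Ran}L^{j}$ be dense in $\bigoplus_{\ell\ge j}\cU_{\ell}'$ for each $j\ge1$.
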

 
 \begin{remark} \label{R:sysinter}   {\em
     If $\{U_{k}\}_{k \ge 0}$ is the $\bbeta$-unitary colligation 
     family constructed as in Theorem \ref{T:BL3}, then the 
     associated input-output map $\bT_{\bbeta}$ \eqref{IOmap} is 
     isometric from $\bigoplus_{k=0}^{\infty} \cU_{k}$ into 
     $\ell^{2}_{\bbeta}(\cY)$; this fact follows directly from 
     Corollary \ref{C:bbetaisom} and requires only the 
     $\bbeta$-isometric property of $\{U_{k}\}_{k \ge 0}$.  From the 
     fact that $\cM = M_{\Theta} H^{2}(\{\cU_{k}\}_{k \ge 0})$ is 
     $S_{\bbeta}$-invariant, it follows that the image of $\cM$ under 
     the inverse $Z$-transform
     $$
     \cM^{\vee} = \{ \{y(k)\}_{k \ge 0} \colon \sum_{k=0}^{\infty} 
     y(k) z^{k} \in \cM \} = \bT_{\bbeta}\left( 
     \bigoplus_{k=0}^{\infty} \cU_{k}\right) \subset 
     \ell^{2}_{\bbeta}(\cY)
     $$
     is invariant under  the inverse $Z$-transform version  
     $S_{\bbeta}^{\vee}$ of $S_{\bbeta}$ 
     acting on $\ell^{2}_{\bbeta}(\cY)$:   
     $$
       S^{\vee}_{\bbeta} \colon \{y(0), y(1), y(2), \dots) \mapsto ( 
       0, y(0), y(1), \dots).
     $$
     From the arguments above one can see that the 
     $S^{\vee}_{\bbeta}$-invariance property of $\cM^{\vee}$ is a consequence 
     of the fact that $\{U_{k}\}_{k \ge 0}$ satisfies the additional 
     weighted coisometry property \eqref{wghtcoisom}.
 } \end{remark}
    
\subsection{Wandering-subspace $\bbeta$-inner functions and Beurling-Lax 
representations}

If the subspace $\cM \subset H^2_\bbeta(\cY)$ is $S_{\bbeta}$-invariant, then the
subspace $\cE: = \cM \ominus S_\bbeta\cM$ has the property that $\cE \subset
\cM$ and $\cE \perp S_{\bbeta} \cM$.  If it is the case that $\cE$
generates $\cM$ in the sense that 
$
\cM={\displaystyle\bigvee_{k\ge 0} S_\bbeta^k\cE}, \;$
then one says  that $\cM$ has the {\em wandering subspace
property} (with wandering subspace equal to $\cE$). 

\smallskip

Letting $k=0$ in formula \eqref{kdif} and recalling formulas \eqref{st2},
we conclude that the reproducing kernel for the subspace $\cE$  equals
\begin{align}
K_\cE(z,\zeta)=& \,
I_{\cY}-CR_{\bbeta}(zA)\cG_{\bbeta,C,A}^{-1}R_{\bbeta}(\zeta A)^*C^*\notag\\
&+z\overline{\zeta}CR_{\bbeta,1}(zA)\left(\Gr^{(1)}_{\bbeta,C,A}\right)^{-1}R_{\bbeta,1}(\zeta 
A)^*C^*.\notag
\end{align}

Following \cite{oljfa, olaa} we say that the function $\Theta$ is a
{\em wandering-subspace $\bbeta$-inner function}
whenever
 \begin{enumerate}
        \item $M_{\Theta} \colon \cU \to H^2_\bbeta(\cY)$ is
        isometric, and
        \item $M_\Theta \cU$ is orthogonal to $S_{\bbeta}^{\ell}
        M_\Theta \cU$ for $\ell \ge 1$.
\end{enumerate}  
In case $M_{\Theta} \cU  = \cE$ is the wandering subspace for the
$S_{\bbeta}$-invariant subspace $\cM$, then we have the Beurling-Lax-type
representation of $\cM$ as the closure of $\Theta \cdot \cU[z]$, 
where $\cU[z]$ denotes the collection of 
polynomials with coefficients in the Hilbert space $\cU$.  Construction of a
wandering-subspace Bergman-inner function (or Bergman-inner function 
for short) for the wandering subspace $\cE$ of $\cM$  
amounts to focusing on the first element $\Theta_{0}$ in the
$\bbeta$-inner function family associated with $\cM$.
Specifying Lemma \ref{L:6.8} for the case $k=0$ then leads to the
following.  The special case $\beta_{j} = \frac{j! 
(n-1)!}{(n+j-1)!}$ is treated in \cite{BBberg}; closely 
related results for this special case were obtained earlier by 
Olofsson (see \cite{olaa}, especially Theorems 4.2 and 6.1 there).

\begin{theorem}  \label{P:6.4u}
    Given a $\bbeta$-strongly stable $\bbeta$-output-pair $(C,A)$,   there exist 
operators
    $B \in \cL(\cU, \cX)$ and $D \in \cL(\cU, \cY)$ which solve the
    Cholesky factorization problem
$$
\begin{bmatrix}B \\ D\end{bmatrix}\begin{bmatrix}B^* & D^*\end{bmatrix}=
\begin{bmatrix}\left(\Gr^{(1)}_{\bbeta,C,A}\right)^{-1} & 0 \\ 0 & 
I_{\cY}\end{bmatrix}-
\begin{bmatrix}A \\ C\end{bmatrix}\cG_{\bbeta,C,A}^{-1}\begin{bmatrix}A^* & 
C^*\end{bmatrix}.
$$
Moreover, if $\Theta$ is defined by
$$
\Theta(z)=D+zCR_{\bbeta,1}(zA)B
$$
where $\left[ \begin{smallmatrix} B \\ D \end{smallmatrix} \right]$
solves \eqref{pr6} (with $k=0$), then:
\begin{enumerate}
\item The factorization $K_\cE(z,\zeta)=\Theta(z)\Theta(\zeta)^*$ holds and 
therefore,
the multiplication operator $M_{\Theta}$ maps $\cU$ onto $\cE$ unitarily.

\item The subspace $\cE$ is orthogonal to $S_\bbeta^{k}M_{\Theta}\cU$ for every 
$k\ge 1$.

\item $M_{\Theta}$ is a contractive multiplier from the Hardy space
$H^{2}(\cU)$ into $H^2_\bbeta(\cY)$.  Hence, if
$\cM$ has the wandering subspace property, we conclude that $\cM$ has
the Beurling-Lax-type representation
$$
 \cM = \text{$H^2_\bbeta(\cY)$-closure of } M_{\Theta} H^{2}(\cU).
 $$
\end{enumerate}
\end{theorem}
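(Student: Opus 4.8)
The plan is to read off Theorem \ref{P:6.4u} as the boundary case $k=0$ of the machinery assembled in Section \ref{S:metric}, together with the kernel computation recorded in Proposition \ref{P:6.2}. Throughout I use the normalization $\beta_0=1$ from \eqref{1.6}, so that $R_{\bbeta,0}=R_\bbeta$ and, by \eqref{st2}, $\Gr^{(0)}_{\bbeta,C,A}=\cG_{\bbeta,C,A}$; I also use that the pair is exactly $\bbeta$-observable, which is what makes the gramians $\cG_{\bbeta,C,A}$ and $\Gr^{(1)}_{\bbeta,C,A}$ appearing in the Cholesky problem invertible (by Proposition \ref{P:exactbetaobs}, once $\cG_{\bbeta,C,A}$ is strictly positive so is $\Gr^{(1)}_{\bbeta,C,A}$). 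The $\bbeta$-strong stability is used only to guarantee, as in the discussion preceding the theorem, that $\cO_{\bbeta,C,A}$ is isometric, so that $\cM^{\perp}=\operatorname{Ran}\cO_{\bbeta,C,A}$ is isometrically included and the stated formula for $K_\cE$ (obtained from \eqref{kdif} at $k=0$) is valid.

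First I would produce $B$ and $D$. This is exactly Lemma \ref{L:5.6} read at $k=0$: by Proposition \ref{P:3.0} the weighted Stein identity \eqref{7.3} at $k=0$ says that the column operator $\sbm{(\Gr^{(1)}_{\bbeta,C,A})^{1/2}A\,\cG_{\bbeta,C,A}^{-1/2} \\ C\,\cG_{\bbeta,C,A}^{-1/2}}$ is an isometry of $\cX$ into $\cX\oplus\cY$, and extending it to a coisometry (equivalently, solving \eqref{pr6} with $k=0$ subject to $\left[\begin{smallmatrix}B\\ D\end{smallmatrix}\right]$ injective) yields $B,D$ for which $U_0=\left[\begin{smallmatrix}A & B\\ C & D\end{smallmatrix}\right]$ satisfies both the isometric relation \eqref{isom} and the weighted-coisometric relation \eqref{wghtcoisom} at $k=0$. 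For part (1), the relation \eqref{wghtcoisom} lets me invoke Lemma \ref{L:frakcoisom}, whose identity \eqref{frakkerid} at $k=0$, after substituting $\beta_0=1$, $R_{\bbeta,0}=R_\bbeta$, $\Gr^{(0)}=\cG$ and rearranging, is precisely $K_\cE(z,\zeta)=\Theta(z)\Theta(\zeta)^*$ (this is Lemma \ref{L:6.8} at $k=0$). Because \eqref{isom} also holds, Lemma \ref{L:5.1}(3) shows $M_\Theta$ is isometric on $\cU$; a one-line reproducing-kernel computation then identifies the kernel of the closed subspace $M_\Theta\cU$ as $\Theta(z)\Theta(\zeta)^*$, and uniqueness of the reproducing kernel forces $M_\Theta\cU=\cE$ isometrically, so $M_\Theta\colon\cU\to\cE$ is unitary.

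Parts (2) and (3) then fall out of the same lemma. Part (2) is Lemma \ref{L:5.1}(1b) at $k=0$: the relation \eqref{jul13}, being part of \eqref{isom}, gives that $M_\Theta\cU=S_\bbeta^0\Theta\cU$ is orthogonal to $S_\bbeta^m\Theta\cU$ for all $m\ge 1$. For part (3), since \eqref{isom} implies the contractive relation \eqref{contr} (via \eqref{jul13} and \eqref{jul14}), Lemma \ref{L:5.1}(4) gives that $M_\Theta$ extends to a contraction from the \emph{unweighted} Hardy space $H^2(\cU)$ into $H^2_\bbeta(\cY)$, i.e. $\Theta$ is a contractive multiplier. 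Finally, assuming the wandering-subspace property $\cM=\bigvee_{k\ge 0}S_\bbeta^k\cE$, I would note that multiplication operators intertwine the shift, so $S_\bbeta^k\cE=S_\bbeta^kM_\Theta\cU=M_\Theta(z^k\cU)$; hence $\bigvee_{k\ge 0}S_\bbeta^k\cE$ is the $H^2_\bbeta(\cY)$-closure of $M_\Theta\cU[z]$, and since polynomials are dense in $H^2(\cU)$ and $M_\Theta$ is bounded, this equals the $H^2_\bbeta(\cY)$-closure of $M_\Theta H^2(\cU)$, giving the Beurling-Lax representation.

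The theorem is genuinely routine once Lemmas \ref{L:5.6}, \ref{L:frakcoisom}, and \ref{L:5.1} are available; the only steps needing care are the boundary bookkeeping at $k=0$ (invertibility of $\Gr^{(1)}_{\bbeta,C,A}$ and the normalizations $\beta_0=1$, $R_{\bbeta,0}=R_\bbeta$, $\Gr^{(0)}=\cG$) and the fact that in part (3) the domain is the \emph{unweighted} space $H^2(\cU)$, on which $M_\Theta$ is only contractive rather than isometric. That mismatch is precisely why the representation holds only up to $H^2_\bbeta(\cY)$-closure rather than isometrically, and the density/closure argument is the sole point where a short argument beyond direct citation is required.
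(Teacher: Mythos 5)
Your proposal is correct and follows essentially the same route as the paper: the paper likewise obtains the theorem by specializing Lemma \ref{L:5.6}, Lemma \ref{L:6.8} (via Lemma \ref{L:frakcoisom}), and Lemma \ref{L:5.1} to the case $k=0$, with the final Beurling--Lax statement noted as a consequence of Lemma \ref{L:5.1}(4). Your added bookkeeping (invertibility of $\Gr^{(1)}_{\bbeta,C,A}$ via Proposition \ref{P:exactbetaobs}, the normalizations $\beta_0=1$, $\Gr^{(0)}_{\bbeta,C,A}=\cG_{\bbeta,C,A}$, and the polynomial-density closure argument) fills in exactly the details the paper leaves implicit.
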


We note that the last statement in the theorem is a consequence of 
part (4) of Lemma \ref{L:5.1}. 

\begin{remark} \label{R:BLcompare} {\em
    We note that our companion paper \cite{BBberg} also discusses 
    several versions of Beurling-Lax-type representations for 
    weighted Hardy spaces, but for the special case where $\bbeta_{k} 
    = \frac{ k! (n-1)!}{(k+n-1)!}$ ($\bbeta = \bbeta_{\alpha}$ as in 
    \eqref{1.5} with $\alpha = n$ a positive integer).  In 
    particular, Section 6.1 above is the generalization of the 
    ``second approach'' from \cite{BBberg}, and Section 6.2 above is the 
    generalization of the ``third approach'' from \cite{BBberg}. The 
    ``first approach'' from \cite{BBberg} uses the special form 
    $\bbeta = \bbeta_{n}$ in a fundamental way and hence we have no 
    analogue of the ``first approach'' for the general weight setting 
    considered here.
    } \end{remark}

\section{Weighted Hardy spaces, characteristic functions, and operator model theory}

Theorem \ref{T:BL3} explains how closed $S_{\bbeta}$-invariant 
subspaces $\cM \subset H^{2}_{\bbeta}(\cY)$ correspond to 
$\bbeta$-inner function families $\{\Theta_{k}\}_{k \ge 0}$, 
including a transfer-function-like realization $\Theta_{k}(z) = 
\bbeta_{k}^{-1} D_{k} + z C R_{\bbeta, k+1}(zA) B_{k}$ having a nice 
system-theoretic interpretation.   It will be convenient to introduce 
the following terminology.

\begin{definition} \label{D:betaCdot0}
    We say that the Hilbert space operator $T \in \cL(\cX)$ is a 
    $*$-$\bbeta$-hypercontraction if its adjoint $A = T^{*}$ is a 
    $\bbeta$-hypercontraction (see Definition \ref{D:3.5}). 
    We say that $T$ is a $\bbeta$-$C_{\cdot 0}$ $*$-$\bbeta$ 
    hypercontraction if also $A = T^*$ is 
    $\bbeta$-strongly stable (see Definition \ref{D:4}).
    \end{definition}
    
From Lemma \ref{L:3.1} it is easily seen that any operator $T$ 
of the form 
\begin{equation}  \label{Tmodel}
T = P_{\cM^{\perp}} S_{\bbeta}|_{\cM^{\perp}}
\end{equation}
for an $S_{\bbeta}$-invariant subspace $\cM \subset 
H^{2}_{\bbeta}(\cY)$ is a $\bbeta$-$C_{\cdot 0}$ 
$*$-$\bbeta$-hypercontraction. 
To complete our operator-model theory for the class of $\bbeta$-$C_{\cdot 0}$ 
$*$-$\bbeta$-hypercon\-traction operators $T$, it remains to define
a $\bbeta$-inner characteristic function family $\{\Theta_{T,k}\}_{k 
\ge 0}$ for any $\bbeta$-$C_{\cdot 0}$ $*$-$\bbeta$-hypercon\-traction operator $T$ with the 
property that we recover $T$ up to unitary equivalence via
the formula \eqref{Tmodel} with $\cM = M_{\Theta_{T}} H^{2}(\{ 
\cU_{k}\}_{k \ge 0})$.

\smallskip

Let us suppose first only that $T \in \cL(\cX)$ is a 
$*$-$\bbeta$-hypercontraction, i.e., $A:= T^{*}$ is a 
 $\bbeta$-hypercontraction.  In 
particular, we then have that $\Gamma_{\bbeta,A}[I_{\cX}] \ge 0$ and 
hence $\Gamma_{\bbeta,A}[I_{\cX}]$ has a positive semidefinite 
square root, denoted as $D_{\bbeta,A}: = \left( \Gamma_{\bbeta, 
A}[I_{\cX}] \right)^{1/2}$. Since $A$ is  $\bbeta$-hypercontractive, a 
consequence 
of the sufficiency side of part (1) of Theorem \ref{T:1.1}  (with $H = I_{\cX}$) 
is that 
$(D_{\bbeta, A}, A)$ is a $\bbeta$-output stable pair which is also 
$\bbeta$-isometric (i.e., the inequality \eqref{3.16} holds with 
equality) by construction.  We then have all the shifted gramians 
$\Gr^{(k)}_{\bbeta, D_{\bbeta, A},A}$ given by \eqref{defR} or 
\eqref{defRa} defined and positive semidefinite.  Furthermore, by 
Proposition \ref{P:3.0}  we know that the weighted Stein identities 
\eqref{7.3} hold for $k=0,1,2, \dots$.

\smallskip

As in Remark \ref{R:unitarycol}, for 
$k=0,1,2, \dots$ we let $\cX_{k}$ be the completion of $\cX$ in the 
$\cX_{k}$-metric given by
$$
   \langle x, x' \rangle_{\cX_{k}}: = \langle 
   \Gr^{(k)}_{\bbeta,D_{\bbeta,A},A} x, x' \rangle_{\cX},
$$
where any elements of $\cX$ having zero $\cX_{k}$-norm are identified 
with zero. Similarly, we let $\cY_{k}$ denote the space 
$\cD_{\bbeta, A}: = \overline{\operatorname{Ran}} D_{\bbeta, A}$ but 
with inner product given by
$$
   \langle D_{\bbeta, A}x, D_{\bbeta, A} x' \rangle_{\cY_{k}} = 
   \beta_{k}^{-1}\cdot \langle D_{\bbeta, A}x, D_{\bbeta, A} x' 
   \rangle_{\cX}.
$$
We next let $\bA_{k} \in \cL(\cX_{k}, \cX_{k+1})$ denote 
the operator $A \in \cL(\cX)$, but viewed as acting from $\cX_{k}$ 
into $\cX_{k+1}$.  Similarly, we let $\bC_{k}$ denote the operator 
$D_{\bbeta, A}$ but viewed as an operator acting from $\cX_{k}$ into 
$\cY_{k}$.   Due to the validity of the weighted Stein identities 
\eqref{7.3}, we see that the operator $\left[ \begin{smallmatrix} 
\bA_{k} \\ \bC_{k} \end{smallmatrix} \right]$ extends uniquely to a 
well-defined isometry acting from $\cX_{k}$ into $\left[ 
\begin{smallmatrix} \cX_{k+1} \\ \cY_{k} \end{smallmatrix} \right]$.
 In particular, we see that there is a uniquely determined unitary 
 transformation $\bomega_{k} \colon \cD_{\bA_{k}} \to \cY_{k}$
 (here $\cD_{\bA_{k}} = \overline{\operatorname{Ran}} D_{\bA_{k}}$ where 
 we use the notation $D_{\bA_{k}} = (I - \bA_{k}^{*} \bA_{k})^{1/2}$) so 
 that $\bC_{k} = \bomega_{k} D_{A_{k}}$ (note that for $k=0$ we have 
 $\Gr^{(0)}_{\bbeta, D_{\bbeta,A},A} = I_{\cX}$ and $\bomega_{0} = 
 I_{\cD_{\bbeta,A}}$).  We next introduce the other defect operator
 $D^{(k)}_{T}: = D_{\bA_{k}^{*}} = \left(I - \bA_{k} \bA_{k}^{*} 
 \right)^{1/2}$ and the coefficient space $\cU_{k} = 
 \overline{\operatorname{Ran}}( D_{\bA_{k}^{*}})^{1/2}$.
 We then define a unitary colligation matrix $\bU_{T,k}$
    by
$$
 \bU_{T,k} = \begin{bmatrix} \bA_{k} & D_{\bA_{k}^{*}} \\ \bC_{k} & 
 -\bomega_{k} \bA_{k}^{*} \end{bmatrix}   \colon 
 \begin{bmatrix} \cX_{k} \\ \cU_{k} \end{bmatrix}  \to
\begin{bmatrix} \cX_{k+1} \\ \cY_{k} \end{bmatrix}.
$$
  Finally, we define the {\em characteristic function family} for the 
  $*$-$\bbeta$-hypercon\-traction $T = A^{*}$ to be the family of 
  functions 
  $\{\bTheta_{T,k}\}_{k=0,1,2, \dots}$ where
$$
\bTheta_{T,k}(z) =  \left. \left(  -\beta_{k}^{-1} \bomega_{k} \bA_{k}^{*} +
z \bC_{k}  R_{\bbeta, k+1}(z \bA_{k}) D_{\bA_{k}^{*}} \right) \right|_{\cU_{k}} 
\colon  \cU_{k} \to \cY_{k}.
$$

We now impose the additional hypothesis that {\em $T$ is 
$\bbeta$-$C_{\cdot 0}$}, i.e., {\em $A = T^{*}$ is $\bbeta$-strongly stable}.
Then Lemma \ref{L:3.12} tells us that 
 $(D_{\bbeta, A}, A)$ is exactly $\bbeta$-observable, the 
 zero-shifted gramian satisfies 
 $\Gr^{(0)}_{\bbeta, D_{\bbeta, A},A} =  \cG_{\bbeta, D_{\bbeta, 
 A},A} = I_{\cX}$, and as a consequence of Proposition 
 \ref{P:exactbetaobs} we have that all the shifted gramians 
 $\Gr^{(k)}_{\bbeta, D_{\bbeta, A}, A}$ are strictly positive 
 definite.  Then $\cX_{k}$ and $\cX$ are the same as sets, i.e.,
 the inclusion maps $i^{(k)} \colon \cX \to \cX_{k}$ are all 
 invertible for $k=0,1,2, \dots$.  If we introduce the adjusted 
 colligation 
 \begin{align}   
 U_{T,k}  & = \begin{bmatrix} (i^{(k+1)})^{-1} & 0 \\ 0 & \beta_{k}^{1/2}I_{\mathcal Y} 
\end{bmatrix} \bU_{T,k} \begin{bmatrix} i^{(k)} & 0 \\ 0 & 
I_{\cU_{k}} \end{bmatrix}  \notag\\ 
& = \begin{bmatrix} (i^{(k+1)})^{-1} \bA_{k} i^{(k)} & (i^{(k+1)})^{-1} 
\bB_{k} \\ \beta_{k}^{1/2} \bC_{k} & \beta_{k}^{1/2} \bD_{k} 
\end{bmatrix},
\label{bUTk}
\end{align}
 then $U_{T,k}$ has the form
 $$
  U_{T,k} = \begin{bmatrix} A & B_{k} \\ C & D_{k} \end{bmatrix}
  \colon \begin{bmatrix} \cX \\ \cU_{k} \end{bmatrix} \to
\begin{bmatrix} \cX \\ \cD_{\bbeta, C,A} \end{bmatrix}
 $$
 and the unitary property of $\bU_{T,k}$ translates to $U_{T,k}$ 
 satisfying the relations \eqref{isom} as well as \eqref{wghtcoisom}.
 We then define the {\em adjusted characteristic function family to be
 $\{\Theta_{T,k}\}$ where we set}
 \begin{equation}   \label{charfuncfam}
     \Theta_{T,k}(z) = \beta_{k}^{-1} D_{k} + z C R_{\bbeta,k+1}(zA) B_{k} \colon 
    \cU_{k} \to \cD_{\bbeta,C,A} 
 \end{equation}
 where $A,C,B_{k},D_{k}$ are determined ultimately from the $\bbeta$-$C_{\cdot 
 0}$ $*$-$\bbeta$-hypercon\-trac\-tion as above.

\smallskip
 
 Alternatively, given the $\bbeta$-$C_{\cdot 0}$ 
 $*$-$\bbeta$-hypercontraction $T$, the characteristic 
 function family $\{\Theta_{T,k}\}_{k \ge 0}$ can be defined more 
 directly as follows.  Set 
$$
A = T^{*}\quad \mbox{and}\quad C = D_{\bbeta,A}.
$$  
Then  $(C,A)$ is an exactly $\bbeta$-observable, $\bbeta$-output stable 
 (also $\bbeta$-isometric) output pair.  Construct operators $B_{k} 
 \in \cL(\cU_{,}, \cX)$ and $D_{k} \in \cL(\cU_{,}, \cY)$ with 
 $\left[ \begin{smallmatrix} B_{k} \\ D_{k} \end{smallmatrix} 
 \right]$ injective by solving the Cholesky factorization problem 
 \eqref{pr6} as in Lemma \ref{L:5.6}, and then set
 $\Theta_{T,k}(z) = \beta_{k}^{-1} D_{k} + z C R_{\bbeta, k+1}(zA) 
 B_{k}$.  Then $\Theta_{T,k}$ is uniquely determined by $T$ up to  a 
 unitary change-of-basis transformation $\sigma_{k}$ on the input 
 space $\cU_{k}$ for $k=0,1,2, \dots$.  We shall also call any such 
 choice $\{\Theta_{T,k}\}_{k \ge 0}$ a {\em characteristic function family}
 for the $\bbeta$-$C_{\cdot 0}$-$*$-$\bbeta$-hypercontraction $T$.
 
\smallskip

 We note that the first element $\Theta_{T,0}$ in the characteristic 
 function family in the first form given above using the defect 
 operators $D_{\bA_{0}^{*}}$ for the special case where $\beta_{j} = 
 \frac{j! (n-1)!}{(j+n-1)!}$ amounts to the characteristic function 
 for the $C_{0 \cdot}$ $n$-hypercontraction  $A = T^*$ introduced and studied
 by  Olofsson \cite{oljfa, olaa}.
 
 \begin{theorem}   \label{T:funcmodel}
     Suppose that $T \in \cL(\cX)$ is a $\bbeta$-$C_{\cdot 0}$ 
     $*$-$\bbeta$-hypercontraction as above.  Let 
     $\{\Theta_{T,k}\}_{k \ge 0}$ be the adjusted characteristic 
     function family for $T$ as given  by \eqref{charfuncfam} and 
     \eqref{bUTk}.  Then $\{\Theta_{T,k}\}_{k \ge 0}$ is an inner 
     function family and $T$ is unitarily equivalent to the operator 
     $P_{\cM^{\perp}} S_{\bbeta}|_{\cM^{\perp}}$, where
     $$
     \cM = \begin{bmatrix} M_{\Theta_{T,0}} &  M_{\Theta_{T,1}} & 
      M_{\Theta_{T,2}} & \cdots \end{bmatrix} 
     H^{2}({\{\cU_{k}\}_{k \ge 0}})
     $$
     is the $S_{\bbeta}$-invariant subspace associated with the inner 
     function family $\{\Theta_{T,k}\}$.

\smallskip
     
     Furthermore, if $T'$ is another $\bbeta$-$C_{\cdot 0}$ $*$-$\bbeta$-hypercontraction 
     on a Hilbert space $\cX'$ with 
     characteristic function family $\{ \Theta_{T',k}\}$, then $T$ 
     and $T'$ are unitarily equivalent if and only if the 
     adjusted characteristic function families $\{\Theta_{T,k}\}_{k \ge 0}$ 
     and $\{\Theta_{T',k}\}_{k \ge 0}$ {\em coincide} in the 
     following sense:  for each $k=0,1, \dots$ there are unitary 
     operators $\sigma_{k} \colon \cU_{k} \to \cU'_{k}$ and $\tau \colon 
     \cD_{\bbeta,A} \to \cD_{\bbeta, A'}$ so that
$$
     \tau \Theta_{T,k}(z) =  \Theta_{T',k}(z) \sigma_{k}\quad \text{for 
     each}\quad z \in {\mathbb D}.
$$
  \end{theorem}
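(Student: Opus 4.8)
The plan is to assemble the pieces already developed rather than build new machinery. For the existence-of-model half, I would set $A = T^{*}$ and $C = D_{\bbeta,A} = (\Gamma_{\bbeta,A}[I_{\cX}])^{1/2}$. By construction $(C,A)$ is a $\bbeta$-isometric pair, and since $A$ is $\bbeta$-strongly stable, part (2) of Lemma \ref{L:3.12} gives $\cG_{\bbeta,C,A} = I_{\cX}$ with $\cO_{\bbeta,C,A}$ isometric, so $(C,A)$ is in particular exactly $\bbeta$-observable. Part (1) of Theorem \ref{T:1.2} then supplies the intertwining \eqref{3.3}, making $\cN := \operatorname{Ran}\cO_{\bbeta,C,A}$ an $S_{\bbeta}^{*}$-invariant subspace on which $\cO_{\bbeta,C,A}$ implements a unitary equivalence $A \cong S_{\bbeta}^{*}|_{\cN}$; writing $\cM = \cN^{\perp}$ and taking adjoints yields $T \cong P_{\cM^{\perp}} S_{\bbeta}|_{\cM^{\perp}}$.

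To identify this $\cM$ with the subspace cut out by $\{\Theta_{T,k}\}_{k\ge 0}$, I would observe that the transfer-function formula \eqref{charfuncfam} is unchanged under a unitary change of state-space coordinates (the factors $R_{\bbeta,k+1}(zA)$ together with $C$ and $B_{k}$ conjugate through). Hence the family built from the abstract pair $(D_{\bbeta,A}, A)$ agrees with the one Theorem \ref{T:BL3} produces from the unitarily equivalent model pair $(E|_{\cM^{\perp}}, S_{\bbeta}^{*}|_{\cM^{\perp}})$. By that theorem $\{\Theta_{T,k}\}$ is therefore a $\bbeta$-inner family and $\cM = M_{\Theta}H^{2}(\{\cU_{k}\}_{k\ge 0})$, completing the first assertion.

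For the backward direction of the coincidence criterion, suppose $\tau\,\Theta_{T,k}(z) = \Theta_{T',k}(z)\,\sigma_{k}$ holds with unitaries $\sigma_{k}\colon \cU_{k}\to\cU_{k}'$ and a single $\tau\colon \cD_{\bbeta,A}\to\cD_{\bbeta,A'}$. I would let $\tau$ act coefficientwise to get a unitary $\widetilde\tau\colon H^{2}_{\bbeta}(\cD_{\bbeta,A}) \to H^{2}_{\bbeta}(\cD_{\bbeta,A'})$ commuting with $S_{\bbeta}$. Since $\widetilde\tau(\Theta_{T,k}u) = (\tau\Theta_{T,k})u = \Theta_{T',k}(\sigma_{k}u)$, the map $\widetilde\tau$ carries $\cM = \bigoplus_{k} S_{\bbeta}^{k}\Theta_{T,k}\cU_{k}$ onto $\cM' = \bigoplus_{k} S_{\bbeta}^{k}\Theta_{T',k}\cU_{k}'$, hence $\cM^{\perp}$ onto $(\cM')^{\perp}$; as $\widetilde\tau$ intertwines $S_{\bbeta}$ it intertwines the two compressions, and with the first part this forces $T \cong T'$.

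The forward direction is where the genuine work lies. Given a unitary $V$ with $VT = T'V$, equivalently $VA = A'V$, I would push the equivalence through every ingredient: $\Gamma_{\bbeta,A'}[I] = V\Gamma_{\bbeta,A}[I]V^{*}$ gives $D_{\bbeta,A'} = V D_{\bbeta,A} V^{*}$, so $\tau := V|_{\cD_{\bbeta,A}}$ is a unitary onto $\cD_{\bbeta,A'}$ with $C'V = \tau C$, and the series \eqref{defR} yields $\Gr^{(k)}_{\bbeta,C',A'} = V\Gr^{(k)}_{\bbeta,C,A}V^{*}$ for all $k$. Conjugating the Cholesky identity \eqref{pr6} by $\operatorname{diag}(V,\tau)$ shows that $\sbm{V B_{k} \\ \tau D_{k}}$ is an injective solution of the factorization problem for $(C',A')$, so the essential uniqueness in Lemma \ref{L:5.6} furnishes a unitary $\sigma_{k}\colon \cU_{k}\to\cU_{k}'$ with $B_{k}' = V B_{k}\sigma_{k}^{-1}$ and $D_{k}' = \tau D_{k}\sigma_{k}^{-1}$; substituting into \eqref{charfuncfam} and using $C' R_{\bbeta,k+1}(zA')V = \tau C R_{\bbeta,k+1}(zA)$ gives $\tau\Theta_{T,k} = \Theta_{T',k}\sigma_{k}$. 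The main obstacle is precisely this bookkeeping — managing the essential (rather than literal) uniqueness of the Cholesky factors and confirming that one and the same output unitary $\tau$ serves every $k$. The latter is clean because $C = D_{\bbeta,A}$ and the output space $\cD_{\bbeta,A}$ are determined by $A$ alone, independently of $k$, so all $k$-dependence is absorbed into the input unitaries $\sigma_{k}$.
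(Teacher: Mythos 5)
Your proposal is correct and follows essentially the same route as the paper's own proof: existence via the isometric observability operator of the $\bbeta$-isometric pair $(D_{\bbeta,A},A)$ (Lemma \ref{L:3.12}, Theorem \ref{T:1.2}) combined with Theorem \ref{T:BL3}; the forward implication by conjugating the gramians and the Cholesky data \eqref{pr6} through the unitary equivalence and invoking the essential uniqueness in Lemma \ref{L:5.6}, with $\tau$ taken as the restriction of the intertwining unitary to $\cD_{\bbeta,A}$; and the converse via the map $f(z)\mapsto \tau f(z)$. The only cosmetic difference is in the converse, where the paper justifies that this map carries $\cM^{\perp}$ unitarily onto $\cM^{\prime\perp}$ by a reproducing-kernel computation (Lemma \ref{L:6.8} together with the decomposition \eqref{cmdec}), whereas you obtain the same conclusion directly by observing that the coefficientwise unitary sends the generators $z^{k}\Theta_{T,k}u$ of $\cM$ onto generators of $\cM'$ --- a marginally more economical justification of the identical step.
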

  
  \begin{proof}  Suppose that $T \in \cL(\cX)$ is a $\bbeta$-$C_{\cdot 0}$ 
      $*$-$\bbeta$-hypercontraction and set $A = T^{*}$.  As was 
      remarked in the introductory remarks to this section, it 
      follows that $(C,A): = (D_{\bbeta},A)$ is an exactly 
      $\bbeta$-observable and $\bbeta$-isometric output pair with 
      associated gramian $\cG_{\bbeta, C,A} = \Gr^{(0)}_{\bbeta,C,A} 
      = I_{\cX}$.  Theorem \ref{T:beta-stablemodel}  then tells us that $A$ is 
      unitarily equivalent to $S_{\bbeta}^{*}|_{\operatorname{Ran} 
      \cO_{\bbeta,C,A}}$.   Then the subspace
$$\cM: = \left( \operatorname{Ran} 
      \cO_{\bbeta,C,A} \right)^{\perp} \subset 
      H^{2}_{\bbeta}(\cD_{\bbeta, A})
$$ 
is $S_{\bbeta}$-invariant, and 
      hence by Theorem \ref{T:BL3} has a representation as $\cM = 
      M_{\Theta} H^{2}(\{\cU_{k}\}_{k \ge 0})$ for $\Theta = 
      \begin{bmatrix} \Theta_{0} & \Theta_{1} & \Theta_{2} & \cdots 
	  \end{bmatrix}$ equal to the multiplier associated with an 
	  inner function family $\{\Theta_{k}\}_{k \ge 0}$.  Furthermore, the 
	  formulas \eqref{charfuncfam} and \eqref{bUTk} for 
	  $\Theta_{k}$ amount to one possible way to construct the 
	 associated $\bbeta$-inner function family  $\{ \Theta_{k}\}_{k \ge 
	 0}$ according to the prescriptions of Theorem \ref{T:BL3}.
      It remains now only to verify the uniqueness statement.
      
\smallskip
      
      Suppose first that $T$ and $T'$ are unitarily equivalent 
      $\bbeta$-$C_{\cdot 0}$ $*$-$\bbeta$-hypercon\-tractions on Hilbert spaces 
      $\cX$ and $\cX'$ respectively.  Thus there is a unitary 
      operator $\omega \colon \cX \to \cX'$ so that $\omega T = T' 
      \omega$, and hence also $\omega A = A' \omega$ where $A=T^{*}$ 
      and $A' = T^{\prime *}$.  We next set $\tau = 
      \omega|_{\cD_{\bbeta, A}}$.  Then it is easily verified that 
      $\tau$ is unitary from $\cD_{\bbeta, A}$ onto $\cD_{\bbeta, 
      A'}$.  The fact that the Cholesky factorization problem 
      \eqref{pr6} has a unique injective solution up to a unitary 
      transformation $\sigma_{k}: \,  \cU \to \cU'_{k}$ implies that the 
      colligation matrices $U_{T,k}$ and $U_{T',k}$ are related by
      $$
      \begin{bmatrix} \omega & 0 \\ 0 & \tau \end{bmatrix} U_{T,k} =
	  U_{T',k} \begin{bmatrix} \omega & 0 \\ 0 & \sigma_{k} 
      \end{bmatrix}.
      $$
      From this relation it follows that $\{ \Theta_{T,k}\}$ and $\{ \Theta_{T',k}\}$ 
      coincide in the sense given in the statement of the theorem.
      
\smallskip

      Conversely, suppose that $\{\Theta_{T,k}\}$ and $\{ 
      \Theta_{T',k}\}$ coincide via unitary operators $\tau \colon 
      \cD_{\bbeta, A} \to \cD_{\bbeta,A'}$ and $\sigma_{k} \colon 
      \cU_{k} \to \cU'_{k}$.  Set 
      $$
      \cM : = M_{\Theta} H^{2}_{\bbeta}(\{\cU_{T,k}\}_{k \ge 0}), \quad
       \cM' : = M_{\Theta} H^{2}_{\bbeta}(\{\cU_{T',k}\}_{k \ge 0}).
      $$
      We know that the operator $A = T^{*}$ is unitarily equivalent to 
      $S_{\bbeta}^{*}|_{\cM^{\perp}}$ while $A' = T^{\prime *}$ is 
      unitarily equivalent to $S_{\bbeta}^{*}|_{\cM^{\prime \perp}}$.
      From Lemma \ref{L:6.8}   we see that 
       $K_{S_{n}^{k} \cM \ominus S_{\bbeta}^{k+1}}(z, \zeta) = z^{k}
      \overline{\zeta}^{k} \Theta_{k}(z) \Theta_{k}(\zeta)^{*}$. 
      Combining this with the decomposition \eqref{cmdec} then gives
      $$
      K_{\cM}(z, \zeta) = \sum_{k=0}^{\infty} z^{k}
      \overline{\zeta}^{k} \Theta_{k}(z) \Theta_{k}(\zeta)^{*},
      $$
      from which we get
      $$
      K_{\cM^{\perp}}(z, \overline{\zeta}) = R_{\bbeta}(z \overline{\zeta}) 
      I_{\cD_{\bbeta, A}} - \sum_{k=0}^{\infty} z^{k}
      \overline{\zeta}^{k} \Theta_{T,k}(z) \Theta_{T,k}(\zeta)^{*}.
      $$
      A similar analysis gives the reproducing kernel for the 
      subspace $\cM^{\prime \perp}$:
      $$
      K_{\cM^{\prime \perp}}(z, \overline{\zeta}) = R_{\bbeta}(z \overline{\zeta}) 
      I_{\cD_{\bbeta, A'}} - \sum_{k=0}^{\infty} z^{k}
      \overline{\zeta}^{k} \Theta_{T',k}(z) \Theta_{T',k}(\zeta)^{*}.
      $$
      The fact that $\{ \Theta_{T, k}\}$ and $\{ \Theta_{T',k}\}$ 
      coincide then tells us that
      $$
      \tau K_{\cM^{ \perp}}(z, \overline{\zeta}) =
       K_{\cM^{\prime \perp}}(z, \overline{\zeta}) \tau.
      $$
      It is then easily seen that the map 
      $$ X \colon f(z) \mapsto \tau f(z)
      $$
      is unitary from $\cM^{\perp}$ onto $\cM^{\prime \perp}$ and 
      satisfies the intertwining relation 
      $$
      X \left(S_{\bbeta}|_{\cM^{\perp}}\right)^{*} = 
      \left(S_{\bbeta}|_{\cM^{\prime \perp}}\right)^{*} X
      $$
      and hence
      $ \left(S_{\bbeta}|_{\cM^{\perp}}\right)^{*}$ and 
      $\left(S_{\bbeta}|_{\cM^{\prime \perp}}\right)^{*}$ are 
      unitarily equivalent.  As it has already been observed that $A$ 
      is unitarily equivalent to $ \left(S_{\bbeta}|_{\cM^{\perp}}\right)^{*}$ 
      and $A'$ is unitarily equivalent to 
      $\left(S_{\bbeta}|_{\cM^{\prime \perp}}\right)^{*}$,
      it follows that $A$ and $A'$ (and hence also $T$ and $T'$) are 
      unitarily equivalent to each other.
    \end{proof}
    
   Given a $\bbeta$-inner function family $\{\Theta_{k}\}_{k \ge 0}$, 
 then $\cM : = M_{\Theta} H^{2}_\bbeta(\{\cU_{k}\}_{k \ge 0})$ 
 is a closed $S_{\bbeta}$-invariant subspace of $H^{2}_{\bbeta}(\cY)$.  We 
 may then construct colligation matrices $U_{k} = \left[ 
 \begin{smallmatrix} A & B_{k} \\ C & D_{k} \end{smallmatrix}\right]$ 
     satisfying the metric constraints \eqref{isom} and 
     \eqref{wghtcoisom} so that we also have $\cM = M_{\Theta(\cM)} 
     H^{2}(\{ \cU_{k}\})$, where $\Theta(\cM)_{k} = \beta_{k}^{-1} 
     D_{k} + z C R_{\bbeta, k+1}(zA) B_{k}$.  A consequence of the uniqueness in 
     Theorem \ref{T:BL3} is that then $\Theta_{k}$ and 
     $\Theta(\cM)_{k}$ are the same up to constant right unitary 
     factor.  Hence the realization for $\Theta(\cM)_{k}$ leads to a 
     realization for $\Theta_{k}$.  We next note that one can get a 
     more direct route to this result and as a bonus get a canonical 
     functional-model formulas for the operators $A,B_{k}, C, D_{k}$ 
     for which $\Theta_{k}(z) = \beta_{k}^{-1} D_{k} + z R_{\bbeta,k+1}(zA) 
     B_{k}$ as follows.
     We note that the specialization of this result to $k=0$ 
 is closely related to Theorem 4.2 of Olofsson \cite{olaa}.
 
 \begin{theorem}  \label{T:canfuncmod}
     Suppose that we are given an inner function family $\{\Theta_{k}\}_{k \ge 
     0}$ (say $\Theta_{k}(z) \colon \cU_{k} \to \cY$ for 
     $k = 0,1, \dots$) generating the $S_{\bbeta}$-invariant subspace
     $$
     \cM = M_{\Theta} H^{2}(\{ \cU_{k}\}_{k \ge 0}) \subset 
     H^{2}_{\bbeta}(\cY).
     $$
     Then a $\bbeta$-unitary colligation family realization  
     $$
     \left\{U_{k} = \begin{bmatrix} A & B_{k} \\ C & D_{k} \end{bmatrix} 
     \colon \begin{bmatrix} \cX \\ \cU_{k} \end{bmatrix} \to 
     \begin{bmatrix} \cX \\ \cY \end{bmatrix} \right\}_{k \ge 0}
$$
for $\{\Theta_{k}\}_{k\ge 0}$ can be constructed as follows.
Take $\cX = \cM^{\perp} \subset H^{2}_{\bbeta}(\cY)$ and 
\begin{equation}   \label{funcmodcol}
U_{k} = \begin{bmatrix} \left. S_{\bbeta}^{*} \right|_{\cM^{\perp}} &
\left. \left( S_{\bbeta}^{k+1} \right)^{*} S_{\bbeta}^{k} 
M_{\Theta_{k}}\right|_{\cU_{k}} 
\\  E|_{\cM^{\perp}} & \beta_{k} \Theta_{k}(0) \end{bmatrix} \colon 
\begin{bmatrix}  \cM^{\perp} \\ \cU_{k} \end{bmatrix} \to 
    \begin{bmatrix} \cM^{\perp} \\ \cY \end{bmatrix}.
\end{equation}
  \end{theorem}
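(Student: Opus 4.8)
The plan is to take $A=S_\bbeta^*|_{\cM^\perp}$ and $C=E|_{\cM^\perp}$ as prescribed and to check three things in turn: that \eqref{funcmodcol} really defines a colligation family, that its transfer functions recover the given $\{\Theta_k\}$, and that the family is $\bbeta$-unitary. For the preliminaries, part (3) of Theorem~\ref{T:1.2} together with Lemma~\ref{L:3.1} shows that the model pair $(C,A)=(E|_{\cM^\perp},S_\bbeta^*|_{\cM^\perp})$ is $\bbeta$-output stable and exactly $\bbeta$-observable, with $\cG_{\bbeta,C,A}=\Gr^{(0)}_{\bbeta,C,A}=I_{\cM^\perp}$, with $\cO_{\bbeta,C,A}$ the inclusion of $\cM^\perp$, and in particular observable ($\bigcap_{j\ge 0}\operatorname{Ker}CA^j=\{0\}$); this last fact is the key uniqueness tool below. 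To see that $\operatorname{Ran}B_k\subset\cX=\cM^\perp$, for $m\in\cM$ I would write $\langle B_k u,m\rangle=\langle S_\bbeta^k M_{\Theta_k}u,\,S_\bbeta^{k+1}m\rangle$ and note that $S_\bbeta^k M_{\Theta_k}\cU_k$ is the summand $S_\bbeta^k\cM\ominus S_\bbeta^{k+1}\cM$ of the decomposition \eqref{cmdec}, hence orthogonal to $S_\bbeta^{k+1}\cM\ni S_\bbeta^{k+1}m$; so the product vanishes and $B_k u\in\cM^\perp$.

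Next I would verify that \eqref{2.10} returns $\Theta_k$. Since $CA^jf=\beta_jf_j$ on $\cM^\perp$, formula \eqref{Obk-model} gives $CR_{\bbeta,k+1}(zA)B_ku=\Ob^{(k+1)}_{\bbeta,C,A}(B_ku)=S_{\bf 1}^*(\Theta_ku)$, whence $zCR_{\bbeta,k+1}(zA)B_ku=\Theta_k(z)u-\Theta_k(0)u$; combined with $\beta_k^{-1}D_k=\Theta_k(0)$ this yields $\beta_k^{-1}D_k+zCR_{\bbeta,k+1}(zA)B_k=\Theta_k$. Reading off Taylor coefficients, this amounts to $D_k=\beta_k\Theta_k(0)$ and $CA^jB_k=\beta_{j+k+1}(\Theta_k)_{j+1}$, so both $D_k$ and the moment sequence $\{CA^jB_k\}_{j\ge 0}$ are determined by $\Theta_k$ and $(C,A)$ alone — the fact I will exploit for uniqueness.

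For $\bbeta$-unitarity I would avoid a frontal assault on \eqref{isom} and \eqref{wghtcoisom} and instead compare with the Cholesky construction. Lemma~\ref{L:5.6} applied to $(C,A)$ yields $B_k^{\mathrm{Ch}},D_k^{\mathrm{Ch}}$ whose colligation is $\bbeta$-unitary and whose transfer functions $\Theta_k^{\mathrm{Ch}}$ satisfy, by Lemma~\ref{L:6.8}, $K_{S_\bbeta^k\cM\ominus S_\bbeta^{k+1}\cM}(z,\zeta)=z^k\bar\zeta^k\Theta_k^{\mathrm{Ch}}(z)\Theta_k^{\mathrm{Ch}}(\zeta)^*$; the $\bbeta$-inner family property of $\{\Theta_k\}$ produces the same kernel with $\Theta_k$ in place of $\Theta_k^{\mathrm{Ch}}$, so $\Theta_k=\Theta_k^{\mathrm{Ch}}\sigma_k$ for unitaries $\sigma_k$. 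The adjusted colligations $U_k^{\mathrm{Ch}}\operatorname{diag}(I,\sigma_k)$ remain $\bbeta$-unitary and realize $\Theta_k$ over the same $(C,A)$; by the previous paragraph their $D$- and $B$-entries equal $\beta_k\Theta_k(0)$ and are pinned down by $\{CA^jB\}_j$, so observability of $(C,A)$ forces them to coincide with the explicit $B_k,D_k$ of \eqref{funcmodcol}. Hence $U_k$ itself satisfies \eqref{isom} and \eqref{wghtcoisom}, completing the proof.

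The main obstacle is this last step, and more precisely the relation \eqref{jul13}, $A^*\Gr^{(k+1)}_{\bbeta,C,A}B_k+\beta_k^{-1}C^*D_k=0$. Unwinding the coefficient computation above shows \eqref{jul13} is equivalent to $M_{\Theta_k}\cU_k\subset\cM$ — that the unshifted range $\Theta_k\cU_k$, not merely $S_\bbeta^k M_{\Theta_k}\cU_k$, already lies in $\cM$ — a containment that is transparent only for $k=0$ and is not obviously forced by Definition~\ref{D:infam}. The comparison route is designed precisely to circumvent it, trading a delicate direct geometric argument for the already-established $\bbeta$-unitarity of the Cholesky realization plus observability-based uniqueness, at the cost of tracking the input unitaries $\sigma_k$ consistently across the two realizations. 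By contrast the companion relation \eqref{jul14a} is easy to check directly: from $S_\bbeta^{k+1}\Ob^{(k+1)}_{\bbeta,C,A}B_ku=S_\bbeta^k(\Theta_k-\Theta_k(0))u$ and the isometry of $M_{\Theta_k}$ on $S_{\bf 1}^k\cU_k$, a short expansion gives $B_k^*\Gr^{(k+1)}_{\bbeta,C,A}B_k+\beta_k^{-1}D_k^*D_k=I_{\cU_k}$.
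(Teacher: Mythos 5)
You have correctly located the crux --- \eqref{jul13}, equivalently the containment $M_{\Theta_k}\cU_k\subset\cM$ --- but your comparison route does not circumvent it; it quietly uses it. The identification $\Theta_k=\Theta_k^{\mathrm{Ch}}\sigma_k$ rests on the claim that the inner-family axioms give $K_{S_\bbeta^k\cM\ominus S_\bbeta^{k+1}\cM}(z,\zeta)=z^k\overline{\zeta}^k\Theta_k(z)\Theta_k(\zeta)^*$, i.e.\ that $S_\bbeta^k\Theta_k\cU_k$ \emph{is} the summand $S_\bbeta^k\cM\ominus S_\bbeta^{k+1}\cM$. Properties (2)--(3) of Definition~\ref{D:infam} do yield $S_\bbeta^k\Theta_k\cU_k\perp S_\bbeta^{k+1}\cM$, but the other half, $S_\bbeta^k\Theta_k\cU_k\subset S_\bbeta^k\cM$, is exactly $\Theta_k\cU_k\subset\cM$. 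Without it you merely have two orthogonal decompositions of $\cM$ --- $\bigoplus_k S_\bbeta^k\Theta_k\cU_k$ and $\bigoplus_k\bigl(S_\bbeta^k\cM\ominus S_\bbeta^{k+1}\cM\bigr)$ --- whose \emph{summed} kernels agree; termwise equality does not follow, so the appeal to Lemma~\ref{L:6.8} is circular with respect to the very point at issue. Moreover, since (as you yourself note) \eqref{jul13} is \emph{equivalent} to the containment, no proof of the theorem can avoid establishing it; the only question is whether it follows from Definition~\ref{D:infam}.

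It does not, as literally stated, and that is why the gap is essential rather than cosmetic. In the classical Hardy space ($\bbeta={\bf 1}$, $\cY=\bbC$) take $\cM=zH^2$, $\cU_0=\{0\}$, and $\cU_k=\bbC$, $\Theta_k\equiv 1$ for $k\ge 1$: conditions (1)--(3) all hold and $M_\Theta$ maps $H^2(\{\cU_k\}_{k\ge 0})$ onto $\cM$, yet $\Theta_1\cU_1=\bbC\not\subset zH^2$. For this family, \eqref{funcmodcol} gives $A=0$, $C=1$, $B_k=0$, $D_k=1$; the transfer function still recovers $\Theta_k$, but $A^*\Gr^{(k+1)}_{\bbeta,C,A}B_k+\beta_k^{-1}C^*D_k=1\ne 0$, so \eqref{isom} fails and the conclusion of the theorem itself fails. (A variant with $\cY=\bbC^2$ avoids the zero input space.) Thus any correct proof must invoke $\Theta_k\cU_k\subset\cM$ --- equivalently $S_\bbeta^k\cM=\bigvee_{\ell\ge k}S_\bbeta^\ell\Theta_\ell\cU_\ell$ --- as part of what ``inner family generating $\cM$'' means; the paper's own direct verification does precisely this when, in checking \eqref{check2}, it asserts $S_\bbeta^kM_{\Theta_k}u\in S_\bbeta^k\cM$ ``by definition of $\cM$.'' If you add that containment as an explicit standing hypothesis or preliminary lemma, your argument does close, and it then constitutes a genuinely different and shorter proof than the paper's (which verifies \eqref{check1}--\eqref{check3} by hand and then proves surjectivity of $U_k$ in two stages, $k=0$ first and general $k$ via Lemma~\ref{L:tildeU0k} and a factorization of $U_k$): your moment identities $D_k=\beta_k\Theta_k(0)$ and $CA^jB_k=\beta_{j+k+1}\Theta_{k,j+1}$, the observability of $\bigl(E|_{\cM^\perp},S_\bbeta^*|_{\cM^\perp}\bigr)$ used for uniqueness, and your direct check of \eqref{jul14a} are all correct.
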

  
  \begin{proof}
      Let us set 
\begin{equation}   \label{funcmodcol'}
      \begin{bmatrix} A & B_{k} \\ C & D_{k} \end{bmatrix} = 
	  \begin{bmatrix} \left. S_{\bbeta}^{*} \right|_{\cM^{\perp}} &
\left. \left( S_{\bbeta}^{k+1} \right)^{*} S_{\bbeta}^{k} 
M_{\Theta_{k}}\right|_{\cU_{k}} 
\\  E|_{\cM^{\perp}} & \beta_{k} \Theta_{k}(0) \end{bmatrix}.
\end{equation}
Before commencing the proof, let us point out some key identities.  Let us 
write out the Taylor series for $\Theta_{k}$ as
$$
   \Theta_{k}(z) = \sum_{j=0}^{\infty} \Theta_{k,j} z^{j}.
$$
Then simple applications of formula \eqref{3.1a} for the action of 
$S_{\bbeta}^{*k}$ gives us the formulas
\begin{align}
  &  \left( (S_{\bbeta}^{k})^{*} S_{\bbeta}^{k} M_{\Theta_{k}} u 
    \right) (z) = \sum_{j=0}^{\infty} \frac{\beta_{j+k}}{\beta_{j}} 
    (\Theta_{k,j} u) z^{j}, \label{verify1} \\
    & \left( B_{k} u\right)(z) = \left( (S_{\bbeta}^{k+1})^{*} 
    S_{\bbeta}^{k} M_{\Theta_{k}} u \right)(z) = \sum_{j=0}^{\infty} 
    \frac{ \beta_{j+k+1}}{\beta_{j}} (\Theta_{k,j+1} u) z^{j}. \label{Bk}
\end{align}

As a first step toward the proof of Theorem \ref{T:canfuncmod},
we verify that $B_{k}$ maps $\cU_{k}$ into $\cM^{\perp}$.  
Indeed, if $f \in \cM$ and $u \in \cU_{k}$, then
$$
\left\langle f, \left( S_{\bbeta}^{k+1}\right)^{*} S_{\bbeta}^{k} 
M_{\Theta_{k}} u \right\rangle_{H^{2}_{\bbeta}(\cY)} = 
\left\langle S_{\bbeta}^{k+1} f, \, S_{\bbeta}^{k} M_{\Theta_{k}} u 
\right\rangle_{H^{2}_{\bbeta}(\cY)} = 0
$$
since $S_{\bbeta}^{k} M_{\Theta_{k}} \cU_{k} \perp S_{\bbeta}^{k+1} 
\cM$ by one of the defining properties of $\{\Theta_{k}\}_{k \ge 0}$ 
being an inner function family.

\smallskip

To verify the weighted isometry property \eqref{isom} of $U_{k}$, it 
suffices to verify the three pieces
\begin{align}
  &  A^{*} \Gr^{(k+1)}_{\bbeta,C,A} A + \beta_{k}^{-1} C^{*}C = 
    \Gr^{(k)}_{\bbeta,C,A}, \label{check1} \\
  & A^{*} \Gr^{(k+1)}_{\bbeta,C,A} B_{k} + \beta_{k}^{-1} C^{*} D_{k} 
  = 0, \label{check2} \\
  & B_{k}^{*} \Gr^{(k+1)}_{\bbeta,C,A} B_{k} + \beta_{k}^{-1} 
  D_{k}^{*} D_{k} = I_{\cU_{k}}.  \label{check3}
\end{align}

To check \eqref{check1}, we first note the identity
\begin{equation}   \label{id}
    S_{\bbeta} \Gr^{(k+1)}_{\bbeta,E,S_{\bbeta}^{*}} S_{\bbeta}^{*} + 
    \beta_{k}^{-1} E^{*} E = \Gr^{(k)}_{\bbeta, E, S_{\bbeta}^{*}}
\end{equation}
which is a consequence of Lemma \ref{L:3.1} (2) and identity 
\eqref{7.3} in Proposition \ref{P:3.0}.
    Due to the $S_{\bbeta}^{*}$-invariance of the subspace 
    $\cM^{\perp}$, we see that simple compression of the identity 
    \eqref{id} to the subspace $\cM^{\perp}$ gives us the identity 
    \eqref{check1}.
 We next note that the identity \eqref{check2} is equivalent to the 
validity of 
\begin{equation}   \label{check2'}
    \left\langle \begin{bmatrix}  \Gr^{(k+1)}_{\bbeta, E, 
    S_{\bbeta}^{*}} S_{\bbeta}^{*}f \\ f(0) \end{bmatrix},  \, \begin{bmatrix} 
    B_{k} \\ \beta_{k}^{-1} D_{k} \end{bmatrix} u 
    \right\rangle_{H^{2}_{\bbeta}(\cY) \oplus \cY} = 0
 \end{equation}
for all $f \in \cM^{\perp}$ and $u \in \cU_{k}$.  Let us rewrite the 
left-hand side of \eqref{check2'} as
\begin{align}
   & \left\langle \Gr^{(k+1)}_{\bbeta, E, S_{\bbeta}^{*}}  
   S_{\bbeta}^{*}f, \, ( S_{\bbeta}^{k+1})^{*} 
   S_{\bbeta}^{k} M_{\Theta_{k}} u \right\rangle_{H^{2}_{\bbeta}(\cY)} + \left\langle E f, 
   \Theta_{k}(0) u \right\rangle_{\cY}\notag \\
   & \quad = \left\langle S_{\bbeta} \Gr^{(k+1)}_{\bbeta, E, 
   S_{\bbeta}^{*}}  S_{\bbeta}^{*}f, \, \left( S_{\bbeta}^{k}\right)^{*} S^k_{\bbeta} 
   M_{\Theta_{k}} u \right\rangle_{H^{2}_{\bbeta}(\cY)} + \left\langle f(0), 
   \Theta_{k}(0) u \right\rangle_{\cY}\notag\\
& \quad =\left\langle \Gr^{(k)}_{\bbeta, E, S_{\bbeta}^{*}}
    f, (S_{\bbeta}^{k})^{*} S_{\bbeta}^{k} M_{\Theta_{k}} u
    \right\rangle_{H^{2}_{\bbeta}(\cY)}
     - \beta_{k}^{-1} \left\langle
    Ef, E (S_{\bbeta}^{k})^{*} S_{\bbeta}^{k} M_{\Theta_{k}} u
    \right\rangle_{\cY}  \notag \\
   & \quad\qquad  + \langle f(0), \Theta_{k}(0) u \rangle_{\cY},
\label{check2''}
\end{align}
where we used identity \eqref{id} for the second step.
 From \eqref{verify1} we see that 
 $$
   E (S_{\beta}^{k})^{*} S_{\bbeta}^{k} M_{\Theta_{k}} u = \beta_{k} 
   \Theta_{k,0} u.
 $$
 Hence the last two terms in \eqref{check2''} cancel and it remains 
 to show that the first term is zero, i.e., that 
 \begin{equation}   \label{check2'''}
     \left\langle \Gr^{(k)}_{\bbeta, E, S_{\bbeta}^{*}} 
    f, (S_{\bbeta}^{k})^{*} S_{\bbeta}^{k} M_{\Theta_{k}} u 
    \right\rangle_{H^{2}_{\bbeta}(\cY)} = 0.
 \end{equation}
 
 Toward this end, we use the factorization \eqref{defRa} to see that
 \begin{align}
    & \left\langle \Gr^{(k)}_{\bbeta, E, S_{\bbeta}^{*}} 
    f, (S_{\bbeta}^{k})^{*} S_{\bbeta}^{k} M_{\Theta_{k}} u 
    \right\rangle_{H^{2}_{\bbeta}(\cY)}   \notag \\
    & \quad = 
    \left\langle S_{\bbeta}^{k} \Ob^{(k)}_{\bbeta, E, S_{\bbeta}^{*}} 
    f, \, S_{\bbeta}^{k} \Ob^{(k)}_{\bbeta,E, S_{\bbeta}^{*}} 
    (S_{\bbeta}^{k})^{*} S_{\bbeta}^{k} M_{\Theta_{k}} u 
    \right\rangle_{H^{2}_{\bbeta}(\cY)}.
    \label{verify1'}
 \end{align}
 We now note that for $f \in \cM^{\perp}$,
 \begin{equation}   \label{verify2}
     S_{\bbeta}^{k} \Ob^{(k)}_{\bbeta, E, S_{\bbeta}^{*}} 
    f = S_{\bbeta}^{k} \Ob^{(k)}_{\bbeta, C, A} f.
 \end{equation}
 From \eqref{verify1} and \eqref{Obk-model} we see that
 \begin{align}
   S_{\bbeta}^{k} 
   \Ob^{(k)}_{\bbeta,E,S_{\bbeta}^{*}}(S_{\bbeta}^{k})^{*} 
   S_{\bbeta}^{k} M_{\Theta_{k}}u &= S_{\bbeta}^{k} 
   \Ob^{(k)}_{\bbeta,E, S_{\bbeta}^{*}} \left( \sum_{j=0}^{\infty} 
\frac{\beta_{j+k}}{\beta_{j}} \Theta_{k,j} u z^{j} \right) \notag  \\
& = S_{\bbeta}^{k} \sum_{j=0}^{\infty} \frac{ 
\beta_{j}}{\beta_{j+k}} \cdot \frac{ \beta_{j+k}}{\beta_{j}} 
\Theta_{k,j} u z^{j}\notag\\
& = \sum_{j=0}^{\infty} \Theta_{k,j} u z^{j+k}
= S_{\bbeta}^{k} M_{\Theta_{k}} u. \label{verify3}
\end{align}
Combining \eqref{verify1}, \eqref{verify2}, and \eqref{verify3} gives 
us that
\begin{equation}   \label{verify4}
    \left\langle \Gr^{(k)}_{\bbeta, E, S_{\bbeta}^{*}}f, 
    (S_{\bbeta}^{k})^{*} S_{\bbeta}^{k} M_{\Theta_{k}}u 
    \right\rangle_{H^{2}_{\bbeta}(\cY)} = \left\langle S_{\bbeta}^{k} 
    \Ob^{(k)}_{\bbeta,C,A}f, S_{\bbeta}^{k} M_{\Theta_{k}} u 
    \right\rangle_{H^{2}_{\bbeta}(\cY)}.
\end{equation}
 We now note that  $S_{\bbeta}^{k} \Ob^{(k)}_{\bbeta, C, A} f$ is 
 orthogonal to $S_{\bbeta}^{k} \cM$ as a consequence of the 
 decomposition \eqref{SMperp}.  As $S_{\bbeta}^{k} M_{\Theta_{k}} u 
 \in S_{\bbeta}^{k} \cM$ by definition of $\cM$ and the fact that 
 $\Theta_{k}$ is an inner function family, we conclude that the right-hand 
 side of \eqref{verify4} is zero, and \eqref{check2'''} follows as 
 needed.
 
\smallskip
 
It remains to verify \eqref{check3} which is equivalent to the validity of
    \begin{equation}   \label{check3'}
	\langle \Gr^{(k+1)}_{\bbeta,C,A} B_{k} u, B_ku 
	\rangle_{H^{2}_{\bbeta}(\cY)} + \beta_{k}^{-1} \| D_{k} u 
	\|^{2} = \| u\|^{2}
	\end{equation}
for all $u \in \cU_{k}$.  From \eqref{Bk} and \eqref{Grk-quadform} in the first 
term on the left-hand side of \eqref{check3'} we have
 \begin{align*}
     &	\langle \Gr^{(k+1)}_{\bbeta,C,A} B_{k} u, B_ku 
	\rangle_{H^{2}_{\bbeta}(\cY)}  \\
	& \quad = 
	\left\langle \Gr^{(k+1)}_{\bbeta, E, S_{\bbeta}^{*}} 
	\left( \sum_{j=0}^{\infty} \frac{ \beta_{j+k+1}}{\beta_{j}} 
	\Theta_{k,j+1} u z^{j} \right), \,  \sum_{j=0}^{\infty} \frac{ 
\beta_{j+k+1}}{\beta_{j}} \Theta_{k,j+1} u z^{j} \right\rangle_{H^{2}_{\bbeta}(\cY)}  \\
	& \quad = \sum_{j=0}^{\infty} \frac{ 
	\beta_{j}^{2}}{\beta_{j+k+1}} \left( 
	\frac{\beta_{j+k+1}}{\beta_{j}} \right)^{2} \left\langle 
	\Theta_{k,j+1} u, \, \Theta_{k,j+1} u \right\rangle_{\cY}\\
	&\quad  = \sum_{j=0}^{\infty} \beta_{j+k+1} \|\Theta_{k,j+1} 
	u\|^2_{\cY} = \sum_{j=1}^{\infty} 
	\beta_{j+k}\|\Theta_{k,j} u\|^2_{\cY}.
\end{align*}
On the other hand, from the definitions it is easily verified that
$$
  \| S_{\bbeta}^{k} M_{\Theta_{k}} u \|^{2}_{H^{2}_{\bbeta}(\cY)} = \sum_{j=0}^{\infty} 
  \beta_{j+k} \|\Theta_{k,j} u \|_{\mathcal Y}^{2}.
$$
Combining this with the result of the preceding calculation gives
$$
\langle \Gr^{(k+1)}_{\bbeta,C,A} B_{k} u, B_k u 
	\rangle_{H^{2}_{\bbeta}(\cY)} + \beta_{k} \|\Theta_{k,0} u 
	\|^{2}_{\cY} = \| S_{\bbeta}^{k} M_{\Theta_{k}}  
	u\|^{2}_{H^{2}_{\bbeta}(\cY)}.
$$
On the other hand we know that $\| S_{\beta}^{k} M_{\Theta_{k}} u 
\|_{H^{2}_{\bbeta}(\cY)}^{2} = \| u \|^{2}_{\cU}$
by another one of the defining properties of $\{ \Theta_{k} \}_{k \ge 0}$ being 
an 
inner function family. From the identity $D_{k} = \beta_{k} \Theta_{k}(0) = 
\beta_{k} \Theta_{k,0}$, we see that $\beta_{k}^{-1} \| D_{k} u 
\|^{2}_{\cY} = \beta_{k} \|\Theta_{k,0} u \|^{2}_{\cY}$.  Combining all these 
observations now gives us \eqref{check3} as wanted.  This completes 
the proof of the weighted isometry property \eqref{isom} for $U_{k}$.

\smallskip

To verify the weighted coisometry property \eqref{wghtcoisom}, we 
note that, in view of the validity of the weighted isometry property 
\eqref{isom} already checked, it suffices to show that the 
colligation matrix $U_{k}$  \eqref{funcmodcol} maps 
$\cM^{\perp} \oplus \cU_{k}$ onto $\cM^{\perp} \oplus \cY$.  Toward 
this end we first consider the special case where $k=0$.

\smallskip

Let us therefore suppose that $\left[ \begin{smallmatrix} g \\ y 
\end{smallmatrix} \right]$ is an element of $\left[ 
\begin{smallmatrix} \cM^{\perp} \\ \cY \end{smallmatrix} \right]$ 
    which is orthogonal to $U_{0} \left[ \begin{smallmatrix} 
    \cM^{\perp} \\ \cU_0 \end{smallmatrix} \right]$ in the $\left[ 
    \begin{smallmatrix} \Gr^{(1)}_{\bbeta, E, S_{\bbeta}} & 0 \\ 0 
	& I_{\cY} \end{smallmatrix} \right]$-metric on 
	$\left[ \begin{smallmatrix} \cM^{\perp} \\ \cY 
    \end{smallmatrix} \right]$, i.e., we suppose that 
\begin{equation}   \label{hyp1}
    \left\langle \begin{bmatrix} \Gr^{(1)}_{\bbeta, E, S_{\bbeta}} & 
    0 \\ 0 & I_{\cY} \end{bmatrix} \begin{bmatrix} 
    S_{\bbeta}^{*} f + S_{\bbeta}^{*} M_{\Theta_{0}}u_{0} \\ f(0) + 
    \Theta_{0}(0) u_{0} \end{bmatrix},\, \begin{bmatrix} g \\ y 
\end{bmatrix} \right\rangle_{\cM^{\perp} \oplus \cY} = 0
\end{equation}
for all $f \in \cM^{\perp}$ and $u_{0} \in \cU_{0}$.  Using the 
factorization \eqref{defRa}, we may rewrite \eqref{hyp1} in the form
\begin{equation}   \label{hyp2}
    \langle S_{\bbeta} \Ob^{(1)}_{\bbeta, E, S_{\bbeta}^{*}} 
    S_{\bbeta}^{*}(f  + M_{\Theta_{0}} u_{0}), S_{\bbeta} 
    \Ob^{(1)}_{\bbeta, E, S_{\bbeta}^{*}} g \rangle + \langle f(0) + 
    \Theta_0(0) u_{0}, y \rangle = 0.
\end{equation}
From the formulas \eqref{Obk-model} for $\Ob^{(1)}_{\bbeta, E, 
S_{\bbeta}^{*}}$ and \eqref{3.1a} for $S_{\bbeta}^{*}$, it is easy to 
verify the identity
$$
  \Ob^{(1)}_{\bbeta, E, S_{\bbeta}^{*}} S_{\bbeta}^{*} = 
  (S_{\bbeta}^{*} S_{\bbeta})^{-1} S_{\bbeta}^{*}.
$$
Note that the latter operator is well defined since $S_{\bbeta}$ is 
left invertible as a consequence of the last condition in 
\eqref{1.6}; in fact this latter operator is the Moore-Penrose left 
inverse for $S_{\bbeta}$ with action the same as the Hardy-space 
adjoint $S_{1}^{*}$ of the Hardy-space shift operator $S_{1}$:
$$
S_{1}^{*} \colon f(z) = \sum_{j=0}^{\infty} f_{j} z^{j} \mapsto 
\sum_{j=0}^{\infty} f_{j+1} z^{j}.
$$
For convenience we shall simply write $S_{1}^{*}$ for 
$(S_{\bbeta}^{*} S_{\bbeta})^{-1} S_{\bbeta}^{*}$ even when acting on 
elements of $H^{2}_{\bbeta}(\cY)$.
  We may therefore rewrite \eqref{hyp2} as
\begin{equation}  \label{hyp3}
    \langle S_{\bbeta} S_{1}^{*} (f + M_{\Theta_{0}} u_{0}), 
    S_{\bbeta} \Ob^{(1)}_{\bbeta, E, S_{\bbeta}^{*}}g 
    \rangle_{H^{2}_{\bbeta}(\cY)} + 
    \langle f(0) + \Theta_{0}(0) u_{0}, y \rangle_{\cY} = 0.
\end{equation}
Note next that 
$$
f = f(0) + S_{\bbeta} S_{1}^{*} f, \quad M_{\Theta_{0}} u_{0} = 
\Theta_{0}(0) u_{0} + S_{\bbeta} S_{1}^{*} \Theta_0 u_0
$$
and that constant functions are orthogonal to $\operatorname{Ran} 
S_{\bbeta}$ in $H^{2}_{\bbeta}(\cY)$.  We may therefore rewrite 
\eqref{hyp3} as
\begin{equation}   \label{hyp4}
 \langle f + \Theta_{0} u_{0}, y + S_{\bbeta} \Ob^{(1)}_{\bbeta, E, 
 S_{\bbeta}^{*}}g \rangle_{H^{2}_{\bbeta}(\cY)} = 0.
 \end{equation}
 We now observe that the decomposition \eqref{kM} (with $k=1$) and the fact 
 that the inner family $\{\Theta_{k}\}_{k \ge 0}$ generates $\cM= 
 M_{\Theta} H^{2}(\{\cU_{k}\}_{k \ge 0})$ gives rise to the following 
 two orthogonal decompositions for the space 
 $(S_{\bbeta}\cM)^{\perp}$:
 \begin{align}
     (S_{\bbeta} \cM)^{\perp} & = \cM^{\perp} \oplus M_{\Theta_{0}} 
     \cU_{0} \notag \\
     & = \cY \oplus \operatorname{Ran} S_{\bbeta} \cdot 
     \Ob^{(1)}_{\bbeta, E|_{\cM^{\perp}}, S_{\bbeta}^{*}|_{\cM^{\perp}}}.
     \label{decom}
 \end{align}
 From the first decomposition in \eqref{decom} we see that $f + \Theta_{0}u_{0}$ 
 (with arbitrary $f \in \cM^{\perp}$ and $u_{0} \in \cU_{0}$) is a 
 generic element of $(S_{\bbeta} \cM)^{\perp}$.  From the second 
 decomposition in \eqref{decom} we see that $y + S_{\bbeta} \Ob^{(1)}_{\bbeta, E, 
 S_{\bbeta}^{*}} g$ is an element of $(S_{\bbeta} \cM)^{\perp}$.  The 
 condition \eqref{hyp4} holding for all $f \in \cM^{\perp}$ and 
 $u_{0} \in \cU_{0}$ thus forces $y + S_{\bbeta} \Ob^{(1)}_{\bbeta, 
 E, S_{\bbeta}^{*}} g = 0$.  As this decomposition is orthogonal in 
 $H^{2}_{\bbeta}(\cY)$, we get $y=0$ and $S_{\bbeta} 
 \Ob^{(1)}_{\bbeta. E, S_{\bbeta}^{*}} g = 0$ individually.  As 
 $S_{\bbeta}$ and $\Ob^{(1)}_{\bbeta, E, S_{\bbeta}^{*}}$ are 
 injective (see Proposition \ref{P:exactbetaobs}), we conclude that 
 $g=0$ as well.  This completes the verification that $U_{k}$ 
 \eqref{funcmodcol} is onto for the special case $k=0$.
 
\smallskip

 The next step is to show that the colligation matrix $U_{k}$ as in 
 \eqref{funcmodcol} is onto for a general $k > 0$.  To this end, we 
 first view the shifted family $\{S_{\bbeta}^{k}\Theta_{k+j}\}_{j \ge 0}$ as the 
 inner family representing the shifted shift-invariant subspace 
 $\widetilde \cM^{(k)}: = S_{\bbeta}^{k} \cM$ and apply the previous 
 $k=0$ (now labeled as $j=0$) analysis to this adjusted setting.  By the 
 result of the previous paragraph we know that the adjusted $0$-level 
 colligation matrix
 $$ \widetilde U^{(k)}_{0}: =
  \begin{bmatrix} A & B_{k} \\ C & D_{k} \end{bmatrix} = 
	  \begin{bmatrix} \left. S_{\bbeta}^{*} \right|_{\widetilde 
	      \cM^{(k) \perp}} &
\left.  S_{\bbeta} ^{*} S_{\bbeta}^{k} 
M_{\Theta_{k}}\right|_{\cU_{k}} 
\\  E|_{ \widetilde \cM^{(k) \perp}} & 0 \end{bmatrix} \colon 
\begin{bmatrix} \widetilde \cM^{(k) \perp} \\ \cU_{k} \end{bmatrix} 
    \to \begin{bmatrix} \widetilde \cM^{(k) \perp} \\ \cY \end{bmatrix}
$$
is onto.  From the decomposition 
$$
 \cM = \left( \oplus_{j=0}^{k-1} S_{\bbeta}^{j} \cU_{j} \right) 
 \bigoplus S_{\bbeta}^{k} \cM,
$$
we read off the following decomposition for 
$\widetilde \cM^{(k)\perp}$:
$$
\widetilde \cM^{(k)\perp} = \cM^{\perp} \bigoplus \left( 
\oplus_{j=0}^{k-1} S_{\bbeta}^{j} \Theta_{j}  \cU_{j} \right).
$$
Note that the operator $ \left. S_{\bbeta}^{j} M_{\Theta_{j}}\right|_{\cU_{j}}$ 
is an isometric embedding of $\cU_{j}$ onto 
$S_{\bbeta}^{j} \Theta_{j} \cU_{j} \subset H^{2}_{\bbeta}(\cY)$ for 
each $j$.  This observation suggests that, in place of $\widetilde 
U^{(k)}$, we may analyze instead the operator $\widetilde U^{(k) 
\prime}$ given by
$$
 \widetilde U^{(k) \prime} = \widetilde U^{(k)} \begin{bmatrix} 
 I_{\cM^{\perp}} &  & & & \\ & M_{\Theta_{0}} & & & \\
 & & \ddots & & \\ & & & S_{\bbeta}^{k-1} M_{\Theta_{k-1}} & \\
 & & & & S_{\bbeta}^{k} M_{\Theta_{k}} \end{bmatrix} 
$$
mapping   $\cM^{\perp} \bigoplus \left(\oplus_{j=0}^{k}  
\cU_{j}\right)$ into $\cM^{\perp} \bigoplus \left(\oplus_{j=0}^{k-1} 
S^{j} \Theta_{j}  \cU_{j} \right) \bigoplus \cY$.  The fact noted 
above that $\widetilde U^{(k)}$ is onto tells us that $\widetilde 
U^{(k) \prime}$ is onto as well.
 Before proceeding further, we need to verify the following lemma.

\begin{lemma}   \label{L:tildeU0k}
    For $j$ any nonnegative integer,
    $$
    S_{\bbeta}^{*} S_{\bbeta}^{j+1} \Theta_{j+1} \cU_{j+1} \subset 
   \cM^{\perp} \oplus  S_{\bbeta}^{j} \Theta_{j} \cU_{j}.
$$
\end{lemma}

\begin{proof}[Proof of Lemma \ref{L:tildeU0k}]
    
    In view of the decomposition
$$
 H^{2}_{\bbeta}(\cY) = \cM^{\perp} \bigoplus \left( 
 \oplus_{\ell=0}^{j} S_{\bbeta}^{\ell} \Theta_{\ell} \cU_{\ell} 
 \right) \bigoplus S_{\bbeta}^{j+1} \cM
$$
it suffices to check, for any $u_{j+1} \in \cU_{j+1}$, that
\begin{align}
    & \langle S_{\bbeta}^{*} S_{\bbeta}^{j+1} \Theta_{j+1} u_{j+1}, \,
    S_{\bbeta}^{j+1}h \rangle = 0 \text{ for all } h \in \cM, 
    \label{verify1''} \\
& \langle S_{\bbeta}^{*} S_{\bbeta}^{j+1} \Theta_{j+1} u_{j+1}, \,  
S_{\bbeta}^{\ell} \Theta_{\ell} u_{\ell} \rangle = 0 \text{ for all } 
0 \le \ell < j \text{ and } u_{\ell} \in \cU_{\ell}.  \label{verify2''}
\end{align}
As for \eqref{verify1''}, compute
$$
\langle S_{\bbeta}^{*} S_{\bbeta}^{j+1} \Theta_{j+1} u_{j+1}, 
S_{\bbeta}^{j+1} h \rangle = \langle S_{\bbeta}^{j+1} \Theta_{j+1} 
u_{j+1}, S_{\bbeta}^{j+2} h \rangle = 0,
$$
since $S_{\bbeta}^{j+1} \Theta_{j+1} \cU_{j+1} \perp S_{\bbeta}^{j+2} 
\cM$.  As for \eqref{verify2''}, compute
$$
\langle S_{\bbeta}^{*} S_{\bbeta}^{j+1} \Theta_{j+1} u_{j+1}, 
S_{\bbeta}^{\ell} \Theta_{\ell} u_{\ell} \rangle = 0,
$$
since $S_{\bbeta}^{\ell + 1} \Theta_{\ell} u_{\ell} \perp 
S_{\bbeta}^{j+1} \cM \supset S_{\bbeta}^{j+1} \Theta_{j+1} \cU_{j+1}$ 
for $\ell < j$.
\end{proof}

With the aid of the result of Lemma \ref{L:tildeU0k}, we see that the 
colligation matrix $\widetilde U_{0}^{(k) \prime}$, when expanded out as a 
block matrix with respect to the decompositions $\cM^{\perp} 
\bigoplus \left( \bigoplus_{j=0}^{k} \cU_{j} \right)$ on the domain side and 
$\cM^{\perp} \bigoplus \left( \bigoplus_{j=0}^{k-1} S_{\bbeta}^{j} 
\Theta_{j} \cU_{j} \right) \bigoplus \cY$ on the range side, is given 
by  $\widetilde U^{(k) \prime} = $
\begin{equation}   \label{tildeU0k}
  \begin{bmatrix}  \left. S_{\bbeta}^{*} \right|_{\cM^{\perp}} & 
      \left. S_{\bbeta}^{*} M_{\Theta_{0}} \right|_{\cU_{0}} & 
     \left. P_{\cM^{\perp}} S_{\bbeta}^{*} S_{\bbeta}M_{\Theta_{1}} 
     \right|_{\cU_{1}} & \dots & \left.  P_{\cM^{\perp}} S_{\bbeta}^{*} 
     S_{\bbeta}^{k} M_{\Theta_{k}} \right|_{\cU_{k}} \\
     & & \left. P_{\Theta_{0} \cU_{0}}S_{\bbeta}^{*} S_{\bbeta} M_{\Theta_{1}}
     \right|_{\cU_{1}} & \ &   \\
     &  &  & \ddots &  \\
     & & &  & \left.  P_{S_{\bbeta}^{k-1}
     \Theta_{k-1} \cU_{k-1}}S_{\bbeta}^{*} S_{\bbeta}^{k} M_{\Theta_{k}}  
     \right|_{\cU_{k}}   \\
     \left. E\right|_{\cM^{\perp}} & \left. E M_{\Theta_{0}} 
     \right|_{\cU_{0}} &  &  &  \end{bmatrix}
 \end{equation}
with all unspecified entries 
equal to $0$.  As noted above, we know that $\widetilde U_{0}^{(k) 
\prime}$ is onto.  This fact combined with  
the matrix structure of $\widetilde U_{0}^{(k) \prime }$ exhibited in 
\eqref{tildeU0k} enables us to see that $P_{S_{\bbeta}^{j-1} \Theta_{j-1} 
\cU_{j-1}} S_{\bbeta}^{*}S_{\bbeta}^{j} M_{\Theta_{j}}$ maps 
$\cU_{j}$ onto $S_{\bbeta}^{j-1}M_{\Theta_{j-1}} \cU_{j-1}$ for each 
$j=1,2, \dots, k$.  A simple induction argument then shows that 
$P_{\Theta_{0} \cU_{0}}S_{\bbeta}^{*k} S_{\bbeta}^{k} M_{\Theta_{k}}$ maps $\cU_{k}$ onto 
$\Theta_{0} \cU_{0}$.  We conclude that the block matrix
$$
    \Xi^{(k)}: = \begin{bmatrix} I_{\cM^{\perp}} & 
     \left. P_{\cM^{\perp}} S_{\bbeta}^{*k} S_{\bbeta}^{k} 
     M_{\Theta_{k}}\right|_{\cU_{k}}  \\ 0 &
    \left. P_{\Theta_{0} \cU_{0}} S_{\bbeta}^{*k} S_{\bbeta}^{k} M_{\Theta_{k}}\right|_{\cU_{k}} 
    \end{bmatrix} \colon \begin{bmatrix}  \cM^{\perp} \\ \cU_{k} 
\end{bmatrix} \to \begin{bmatrix}  \cM^{\perp} \\ \Theta_{0} \cU_{0} \end{bmatrix}
$$
is onto. Let us use the identification map $M_{\Theta_{0}} \colon 
\cU_{0} \to \Theta_{0} \cU_{0}$ to introduce the adjusted $0$-level 
colligation matrix
$$
  U_{0}' = \begin{bmatrix}\left.  S_{\bbeta}^{*}\right|_{\cM^{\perp}} 
  &\left.  S_{\bbeta}^{*}\right|_{\Theta_{0} \cU_{0}} \\ \left. 
  E\right|_{\cM^{\perp}} & \left. E \right|_{\Theta_{0} \cU_{0}} 
\end{bmatrix} \colon \begin{bmatrix}  \cM^{\perp} \\ \Theta_{0} \cU_{0} \end{bmatrix} 
\to \begin{bmatrix} \cM^{\perp} \\ \cY \end{bmatrix}
$$
so that we have the factorization
$$
  U_{0} = U_{0}^{\prime} \begin{bmatrix}  I_{\cM^{\perp}} & 0 \\ 0 & 
  \left. M_{\Theta_{0}}\right|_{\cU_{0}} \end{bmatrix}
$$
(with $U_{0}$ as in \eqref{funcmodcol} with $k=0$).
By the $k=0$ case already handled, we know that $U_{0}$ 
is onto; it follows that $U_{0}^{\prime}$ is onto as well.

\smallskip

Next, we use the easily verified identity
$$
 \left. E S_{\bbeta}^{*k} S_{\bbeta} M_{\Theta_{k}} \right|_{\cU_{k}} 
 = \beta_{k} \Theta_{k}(0)
$$
together with the definitions to verify 
 the factorization formula for $U_{k}$: 
$$ U_{k} = U_{0}' \, \Xi^{(k)} \colon \begin{bmatrix} \cM^{\perp} \\ 
\cU_{k} \end{bmatrix} \to \begin{bmatrix} \cM^{\perp} \\ \cY 
\end{bmatrix}.
$$
We now have exhibited $U_{k}$ as the composition of two onto linear 
operators and it finally follows that $U_{k}$ is also onto as wanted.

\smallskip

It remains now only to check that we recover $\Theta_{k}$ via the 
realization formula \eqref{2.10} or equivalently \eqref{jul15}.
We first note that, for $u \in \cU_{k}$
\begin{align*}
    C A^{j} B_{k} u & = E S_{\bbeta}^{*j} B_{k} u  \text{ (by 
    \eqref{funcmodcol'})} \\
    & = E S_{\bbeta}^{*j} \left( \sum_{\ell=0}^{\infty} 
    \frac{\beta_{\ell+k+1}}{\beta_{\ell}} \Theta_{k, \ell+1} u 
    z^{\ell} \right)  \text{ (using \eqref{Bk})} \\
    & = E \left( \sum_{\ell = 0}^{\infty} \frac{ 
    \beta_{\ell+j}}{\beta_{\ell}} \cdot \frac{ 
    \beta_{\ell+j+k+1}}{\beta_{\ell+j}} \Theta_{k, \ell +j +1} u 
    z^{\ell} \right) \text{ (using \eqref{3.1a})}  \\
    & = \beta_{j+k+1} \Theta_{k,j+1} u.
    \end{align*}
Using this together with the formula $D_{k} = \beta_{k} \Theta_{k}(0) = \beta_{k} 
\Theta_{k,0}$ then gives us, for $\lambda \in {\mathbb D}$,  
\begin{align*}
    \beta_{k}^{-1} D_{k} u + \sum_{j=0}^{\infty} \beta_{j+k+1}^{-1} C 
    A^{j} B u \lambda^{j+1} & = 
    \Theta_{k,0} u + \sum_{j=0}^{\infty} \beta_{j+k+1}^{-1} 
    \beta_{j+k+1} \Theta_{k,j+1} \lambda^{j+1} u \\
    & = \Theta_{k}(\lambda) u
\end{align*}
and \eqref{jul15} follows as wanted.
 \end{proof}

\end{document}